\documentclass[11pt]{article}

\usepackage[T1]{fontenc}
\usepackage[utf8]{inputenc}
\usepackage{amsmath,amssymb,amsfonts,amsthm,mathtools}
\usepackage{mathrsfs}
\usepackage{geometry}
\usepackage{hyperref}
\usepackage{enumitem}
\usepackage{microtype}
\usepackage{xcolor}
\usepackage[normalem]{ulem}
\usepackage{graphicx}
\usepackage{float}
\usepackage{cite}
\usepackage{subfigure}
\usepackage{color}
\usepackage{mdwlist}
\usepackage{authblk}

\hypersetup{colorlinks=true,linkcolor=blue,citecolor=blue,urlcolor=blue}
\numberwithin{equation}{section}

\newtheorem{theorem}{Theorem}[section]
\newtheorem{lemma}[theorem]{Lemma}
\newtheorem{proposition}[theorem]{Proposition}
\newtheorem{corollary}[theorem]{Corollary}
\newtheorem{definition}[theorem]{Definition}
\newtheorem{assumption}[theorem]{Assumption}
\theoremstyle{remark}
\newtheorem{remark}[theorem]{Remark}

\newcommand{\R}{\mathbb R}

\newcommand{\dd}{\,d}

\newcommand{\norm}[1]{\left\|#1\right\|}
\newcommand{\ip}[2]{\left\langle #1,#2\right\rangle}
\newcommand{\pd}{p^{\dagger}}

\newcommand{\Sset}{\mathcal S}
\newcommand{\Lpwt}{L^p_w(0,R)}

\newcommand{\Ltwo}{L^2_w(0,R)}
\newcommand{\phip}{\varphi_p}
\newcommand{\phid}{\varphi_{\pd}}

\newcommand{\add}[1]{{\color{blue}#1}}

\title{Finite inverse nodal problems for singular weighted Sturm--Liouville
 equations: variational selection and\\ spectral matching\footnote
{Zhibo Cheng was supported by the National Natural Science
Foundation of China (12671192) and Natural Science Foundation of
Henan (262300421839). Yonghui Xia was supported by the National
Natural Science Foundation of China (12571176) and Natural Science
Foundation of Guangdong Province (2026A1515011073).}}
\author[1]{Zhibo Cheng}
\affil[1]{\small \itshape School of Mathematica and Information
Science, Henan Polytechnic University, Jiaozuo, China}

\author[2]{Yuchao He} \affil[2]{\small \itshape School of
Mathematical Science, Zhejiang Normal University, Jinhua, China}

\author[1]{Sunan Wang}

\author[3\footnote{Corresponding author. E-mail:~yhxia@zjnu.cn; xiadoc@163.com }]{Yonghui Xia}
\affil[3]{\small \itshape School of Mathematics, Foshan University,
Foshan, China}

\date{}

\begin{document}
\maketitle

\begin{abstract}
The recent work \cite{HeWuXiaZhang2025} introduced a finite-data
inverse nodal framework for regular one-dimensional Sturm-Liouville
operators. Nevertheless, formulating a multidimensional inverse
nodal theory for Schr\"odinger operators remains a challenging open
problem. This paper addresses this gap by establishing a finite
radial inverse nodal theory for the radial spectral branch of
Schr\"odinger operators in balls.
 In contrast with the interval case, the nodal data are the radii of
spherical nodal hypersurfaces of radial eigenfunctions.  The exact
radial reduction preserves this PDE nodal geometry and leads
naturally to a singular weighted Sturm--Liouville problem with
weight $w(r)=r^{n-1}$, rather than to a regular interval model. We
minimize the PDE distance $\|Q-Q_0\|_{L^p(B_R)}$  , equivalently
    the weighted distance $\|q-q_0\|_{L^p_w(0,R)}$, among all radial potentials matching finitely
many prescribed spherical nodal radii. For the associated nodal
constraint sets, we prove spectral regularity, nodal
differentiability, weak closedness, exact realization, and a
finite-codimensional
 $C^1$
    manifold structure. Crucially, without the variational selection imposed by
\(q_0\), the prescribed finite nodal radii fail to determine the potential uniquely:
 the admissible potentials are invariant under constant shifts, and the finite nodal constraints
  yield locally infinite-dimensional level sets at regular points. The variational selection,
  however, restores rigidity.
  Every nontrivial minimizer satisfies piecewise
nonlinear radial Schr\"{o}dinger equations, a mass-balance condition across a
prescribed nodal sphere, and a radial energy identity with dissipation.  The
Liouville normal form explains the relation with one-dimensional inverse nodal
problems, but the inverse-square singularity at the origin and the PDE-natural
weighted metric prevent a direct reduction to the regular interval theory.  In
the constant benchmark case we obtain a zero-distance alternative, a general
$L^p$ near-benchmark first-order selection law, perturbative uniqueness in the
Hilbert radial regime $p=2$, $n=2,3$, and principal-eigenvalue spectral matching
reductions for the remaining global rigidity problem.
\end{abstract}

\noindent\textbf{Keywords.} Inverse nodal problem; radial Schr\"{o}dinger operator;
weighted Sturm--Liouville problem; finite nodal data;
{variational
selection; spectral matching}.\\
\textbf{MSC 2020.} 34B24; 34L05; 34L20; 35J10; 35P15; 49K20.

\section{Introduction and PDE formulation}\label{sec:intro-main}

Inverse spectral theory asks to what extent an operator, a
potential, or a domain can be recovered from spectral information.
The geometric version of this question is epitomized by Kac's
question on hearing the shape of a drum \cite{Kac1966}; related
spectral-geometric phenomena and sharp eigenvalue estimates have led
to a rich literature on what spectral data can and cannot determine
\cite{AshbaughBenguria1989,AshbaughBenguria1992,BerardWebb2022}.
Besides eigenvalues, another important source of inverse information
is provided by the nodal sets of eigenfunctions.  For
one-dimensional Sturm--Liouville operators the zeros of an
eigenfunction form an ordered finite set of nodal points, and
inverse nodal theory studies the recovery of coefficients from such
nodal data.  Classical uniqueness results usually require an
infinite, or asymptotically dense, family of nodal points; see, for
instance,
\cite{Borg1946,Levitan1987,McLaughlin1988,PoschelTrubowitz1986,Yang1997,
Zettl2005}.  The inverse nodal viewpoint has since been extended in
several directions, including discontinuous or nonstandard
Sturm--Liouville settings and reconstruction from zeros of selected
eigenfunctions
\cite{BennewitzBrownWeikard2020,BrowneSleemanWeikard1996,ChenChengLaw2011,Hald1984,HaldMcLaughlin1996,
LawTsay2001,WangYurko2016,Yang2013,Yang2001}.  A recent finite-data
formulation takes a variational point of view: for a prescribed
finite nodal datum and a target potential $q_0$, one minimizes the
$L^p$-distance to $q_0$ among all potentials realizing that datum.
In the regular interval case this approach leads, through a nodal
derivative formula and the Lagrange multiplier rule, to a nonlinear
Schr\"{o}dinger-type critical equation \cite{HeWuXiaZhang2025};
closely related optimization problems for Sturm--Liouville spectra
and nodes have also been studied in \cite{
ChuMengZhang2023,ChuMengWangZhang2024,ChuMengZhang2026,IlyasovValeev2019,ValeevIlyasov2018}.

Passing from ordinary differential equations to partial differential
equations changes the nature of nodal data in a fundamental way.  In
one space dimension, nodal points are isolated and naturally ordered,
as in the classical oscillation theory for second-order equations
\cite{Arnold1992,BirkhoffRota1969}.  In a general multidimensional
domain, however, the nodal set of an eigenfunction is typically a
hypersurface with nontrivial geometry, and there is no immediate
analogue of the ordered nodal sequence; this is one of the central
features of the nodal-domain theory initiated by Courant and Hilbert
and developed in many later works
\cite{Arnold2011,BerardHelffer2020,CourantHilbert1953}.  The present
paper focuses on a situation in which this geometric difficulty can
be handled without losing its PDE origin.
We consider Schr\"{o}dinger operators in a ball and restrict the
potential to be radial.  Then the radial nodal sets are concentric
spheres, and their radii form an ordered finite sequence.  Thus the
multidimensional nodal geometry is converted, by an exact radial
reduction, into a singular weighted Sturm--Liouville problem.

More precisely, let $B_R\subset \mathbb R^n$, $n\ge 2$, be the ball
of radius $R>0$, and consider the Dirichlet Schr\"{o}dinger problem
\[
        -\Delta u+Q(x)u=\lambda u\quad \hbox{in }B_R,
        \qquad u=0\quad \hbox{on }\partial B_R .
\]
We assume that the potential is radial, namely
\[
        Q(x)=q(|x|).
\]
The radial subspace of $L^2(B_R)$ is then invariant under the
operator, and the corresponding radial eigenvalue problem defines a
distinguished spectral branch.  If $u(x)=y(r)$ with $r=|x|$, then
\[
        \Delta u(x)=y''(r)+\frac{n-1}{r}y'(r),\qquad 0<r<R.
\]
Consequently the radial eigenvalue equation is
\begin{equation}\label{eq:radial-ode-intro}
-y''(r)-\frac{n-1}{r}y'(r)+q(r)y(r)=\lambda y(r),
        \qquad 0<r<R,
\end{equation}
with the regularity condition $y'(0)=0$ and the Dirichlet condition
$y(R)=0$.  Equivalently, with
\[
        w(r)=r^{n-1},
\]
we obtain the singular weighted Sturm--Liouville problem
\begin{equation}\label{eq:radial-sl-main}
        -\big(wy'\big)' + wq y=\lambda wy,
        \qquad 0<r<R .
\end{equation}
The natural Hilbert space is $L^2_w(0,R)=L^2((0,R),w(r)\,dr)$.  Up
to the constant factor $|\mathbb S^{n-1}|$, this is precisely the
radial part of $L^2(B_R)$, since
\[
        \int_{B_R}|u(x)|^2\,dx
        = |\mathbb S^{n-1}|\int_0^R |y(r)|^2 r^{n-1}\,dr .
\]
The same correspondence holds for the Dirichlet form.  Hence the
weighted problem \eqref{eq:radial-sl-main} is not an auxiliary model
separated from the PDE; it is the exact radial spectral problem
associated with the Schr\"{o}dinger operator in the ball.

This reduction also gives a precise meaning to finite radial inverse
nodal data.  Let $E_m(\cdot;q)$ be the $m$-th radial eigenfunction
of \eqref{eq:radial-sl-main}.  Its interior zeros are denoted by
\[
        0<T_{1,m}(q)<\cdots<T_{m-1,m}(q)<R .
\]
{The corresponding radial eigenfunction of the PDE is represented, up to a
nonzero normalization constant, by the radial representative
\(E_m(|x|;q)\).  Equivalently, under the radial unitary identification
\[
        \mathcal U:L^2_w(0,R)\to L^2(B_R),
        \qquad
        (\mathcal U y)(x)=|\mathbb S^{n-1}|^{-1/2}y(|x|),
\]
the function \(\mathcal U E_m(\cdot;q)\) is the \(L^2(B_R)\)-normalized
radial eigenfunction.  Since multiplication by a nonzero constant does not
change the zero set, the corresponding radial PDE eigenfunction satisfies}
\[
        u_m(x;q)=0
        \quad\Longleftrightarrow\quad
        |x|=T_{j,m}(q)\quad \hbox{for some }j,
\]
and the nodal set of $u_m$ in the ball is the union of the
concentric spheres
\[
        \bigcup_{j=1}^{m-1}\{x\in B_R:\ |x|=T_{j,m}(q)\} .
\]
Recovering the radial potential from the radii of these nodal
spheres is therefore exactly the same as recovering $q$ from the
zeros of the weighted radial Sturm--Liouville eigenfunction.  This
is the finite radial inverse nodal problem studied in the sequel.

There are essential differences from the regular one-dimensional
interval problem.  The endpoint $r=0$ is singular in the ordinary
Sturm--Liouville sense, the natural measure is
$w(r)\,dr=r^{n-1}\,dr$, and the boundary condition at the origin is
a radial regularity condition rather than a standard separated
boundary condition.  This places the problem closer in spirit to
singular and weighted spectral problems than to a regular interval
model \cite{EckhardtKostenko2016,Teschl2014}.  Moreover, the radial term $(n-1)r^{-1}y'$ has a
genuine effect on the variational critical equation.  When the
Euler--Lagrange equation associated with the inverse nodal
optimization is written on a nodal interval, the corresponding
radial energy satisfies a dissipative identity rather than a
conserved Hamiltonian first integral.  Thus the phase-plane
reconstruction available in the regular interval case cannot be
transferred directly to the radial setting.  The analysis must
instead be carried out in weighted spaces and with estimates adapted
to the singular endpoint.

A useful comparison with the regular interval problem is provided by the
Liouville transformation.   Let
\[
        \alpha=\frac{n-1}{2},\qquad v(r)=r^\alpha y(r).
\]
Then \eqref{eq:radial-ode-intro} is formally transformed into
\begin{equation}\label{eq:liouville-intro}
        -v''(r)+\left(q(r)+\frac{(n-1)(n-3)}{4r^2}\right)v(r)
        =\lambda v(r),\qquad 0<r<R .
\end{equation}
{Consequently, the Liouville normal form should be regarded as a comparison
device rather than as the basic variational setting of the problem.  Although
the transformation removes the first-order radial term and preserves the nodal
radii, the resulting operator is still singular at the origin.  Indeed, its
effective potential contains the fixed inverse-square term
\((n-1)(n-3)/(4r^2)\), and the admissible behavior at \(r=0\) is determined by
the finite-energy radial condition inherited from the ball.  Moreover, the
natural distance in the optimization problem remains
\(\|Q-Q_0\|_{L^p(B_R)}\), equivalently
\(\|q-q_0\|_{L^p_w(0,R)}\), rather than the unweighted \(L^p(0,R)\) distance.}
Consequently, the weighted radial formulation is not merely a change of
variables; it is the natural formulation of the finite inverse nodal problem
for spherical nodal sets of the radial Schr\"{o}dinger operator.

{The weighted radial equation
\eqref{eq:radial-sl-main} is the exact radial representative of the
Schr\"{o}dinger operator in the ball and carries the natural radial measure
\(w(r)\,dr=r^{n-1}\,dr\).  The Liouville normal form
\eqref{eq:liouville-intro} is retained only for comparison with regular
one-dimensional problems.  Since these two formulations have already been
explained here, the separate section devoted to them is removed in order to
avoid repetition and to let the paper proceed directly to the spectral and
nodal regularity theory.}

We now formulate the finite radial inverse nodal optimization problem first in
PDE language.  Let
\[
        \mathcal R L^p(B_R)
        =\{Q\in L^p(B_R): Q(x)=q(|x|)\ \hbox{for some }q\}.
\]
For a radial potential $Q(x)=q(|x|)$, let
\[
        \mathcal T_m(Q)=\big(T_{1,m}(q),\ldots,T_{m-1,m}(q)\big)
\]
be the vector of radii of the spherical nodal sets of the $m$-th radial
PDE eigenfunction.  Given a radial target $Q_0(x)=q_0(|x|)$ and an
admissible vector
\[
        \tau=(\tau_1,\ldots,\tau_{m-1}),
        \qquad 0<\tau_1<\cdots<\tau_{m-1}<R,
\]
the PDE form of the finite radial inverse nodal optimization problem is
\begin{equation}\label{eq:pde-full-problem}
        \min\{\|Q-Q_0\|_{L^p(B_R)}:
        Q\in \mathcal R L^p(B_R),\ \mathcal T_m(Q)=\tau\}.
\end{equation}
For a single prescribed spherical nodal radius $T^*\in(0,R)$, the corresponding
one-node problem is defined by imposing $T_{i,m}(q)=T^*$ in place of
$\mathcal T_m(Q)=\tau$.

The weighted formulation used in the sequel is exactly equivalent to
\eqref{eq:pde-full-problem}.  Indeed, polar integration gives
\begin{equation}\label{eq:pde-weighted-norm}
        \|Q-Q_0\|_{L^p(B_R)}^p
        =|\mathbb S^{n-1}|\int_0^R |q(r)-q_0(r)|^p r^{n-1}\,dr .
\end{equation}
Thus, up to the harmless constant $|\mathbb S^{n-1}|^{1/p}$, the natural PDE
metric on radial potentials is the weighted norm.  Let $1<p<\infty$ and set
\[
        L^p_w(0,R)=L^p((0,R),w(r)\,dr),
        \qquad
        \|q\|_{p,w}
        =\left(\int_0^R |q(r)|^p w(r)\,dr\right)^{1/p} .
\]
Throughout the main part of the paper we assume
\[
        p>\frac{n}{2} .
\]
This condition ensures that the potential term is controlled by the Dirichlet
form and that the radial eigenfunctions have the local regularity needed to
differentiate their nodal points.  We denote the conjugate exponent of $p$ by
\[
        p^\dagger=\frac{p}{p-1} .
\]

For $q\in L^p_w(0,R)$, let $\lambda_m(q)$ be the $m$-th radial eigenvalue and
let $E_m(\cdot;q)$ be the associated real $L^2_w$-normalized radial
eigenfunction.  The corresponding weighted nodal map is
\[
        T_m(q)=\big(T_{1,m}(q),\ldots,T_{m-1,m}(q)\big).
\]
We define
\[
        \Omega_m=\{\tau:\ 0<\tau_1<\cdots<\tau_{m-1}<R\},
\]
and
\[
        \mathcal S^{(m)}_{\tau}
        =\{q\in L^p_w(0,R):\ T_m(q)=\tau\} .
\]
For a single prescribed node $T^*\in(0,R)$, we write
\[
        \mathcal S_{i,m}(T^*)
        =\{q\in L^p_w(0,R):\ T_{i,m}(q)=T^*\} .
\]
The two weighted constrained optimization problems considered in this paper are
\begin{equation}\label{eq:main-full-problem}
        \min_{q\in \mathcal S^{(m)}_{\tau}} \|q-q_0\|_{p,w}
\end{equation}
and
\begin{equation}\label{eq:main-single-problem}
        \min_{q\in \mathcal S_{i,m}(T^*)} \|q-q_0\|_{p,w} .
\end{equation}
In view of \eqref{eq:pde-weighted-norm}, these weighted problems are precisely
the PDE optimization problems restricted to radial potentials.  The weighted
norm is therefore not an auxiliary choice; it is the PDE-natural distance for
radial potentials in the ball.

The first step is to justify that this formulation is well posed in
the singular weighted setting.  We prove that the radial eigenvalues
are simple, that the $m$-th radial eigenfunction has exactly $m-1$
simple interior zeros, and that each nodal map $q\mapsto T_{i,m}(q)$
is locally $C^1$ on $L^p_w(0,R)$.  We also establish weak sequential
continuity of the nodal maps on bounded sets, a feature closely
connected with weak-topology continuity phenomena for differential
systems \cite{Zhang2008}.  This compactness property is essential
for the existence of minimizers in the finite-data variational
problem. A radial derivative formula for the nodal maps is then
derived and used as the basic input for the Lagrange multiplier
analysis.

Our second contribution concerns the realization and geometry of
finite radial nodal constraints.  We prove constructively that every
admissible finite nodal vector $\tau\in\Omega_m$ can be realized by
a bounded piecewise constant radial potential.  Thus the constraint
sets in \eqref{eq:main-full-problem} and
\eqref{eq:main-single-problem} are never empty.  We further show
that $\mathcal S^{(m)}_{\tau}$ is a $C^1$ submanifold of
$L^p_w(0,R)$ of codimension $m-1$, while $\mathcal S_{i,m}(T^*)$ is
a $C^1$ hypersurface.  These results place the finite inverse nodal
problem into a genuine infinite-dimensional constrained variational
framework.

The third part of the paper is devoted to variational selection.
{Before the target potential \(q_0\) and the
minimization criterion are imposed, the prescribed nodal radii define only a
raw nodal constraint.  Such data do not determine a unique potential: adding a
constant to the potential leaves all radial eigenfunctions, and hence all
radial nodal radii, unchanged.  In addition, for finite nodal maps, the
corresponding level sets are locally infinite-dimensional at regular points.}
Nevertheless, the minimization problem has a rigid Euler--Lagrange structure.  If a minimizer is not equal to the
target $q_0$, then it satisfies a Lagrange multiplier equation.  On
each nodal interval, a suitable rescaling of the optimal
eigenfunction solves a nonlinear radial Schr\"{o}dinger equation.  For
a single prescribed node, the nonlinear term has opposite signs on
the two sides of the node, and a mass-balance condition holds across
the prescribed nodal sphere.  This equation is the radial
counterpart of the critical equation arising in the regular interval
problem, but it also carries the radial dissipation produced by the
term $(n-1)r^{-1}U'$.

Finally, we analyze the benchmark case in which $m=2$, $i=1$, and
$q_0\equiv c$ is constant.  Let
\[
        T_c=T_{1,2}(c).
\]
We prove a zero-distance alternative: the constant potential is the
unique global optimizer exactly at its own nodal position $T_c$; if
$T^*\ne T_c$, then the optimal distance is strictly positive,
although the raw finite inverse nodal constraint remains infinitely
non-unique. For arbitrary $p>n/2$, the benchmark problem has a sharp
near-benchmark first-order selection law: every global minimizer,
whether or not it is unique, has the same leading profile determined
by the dual nodal gradient.  In the Hilbert regime $p=2$, $n=2,3$,
and under a local $C^2$ regularity assumption on the benchmark nodal
map, this first-order selection law is upgraded to uniqueness of the
variationally selected optimizer for all nontrivial prescribed nodes
sufficiently close to $T_c$.  We also reduce the remaining global
uniqueness or multiplicity question to principal-eigenvalue spectral
matching functionals on the radial nodal subintervals.

The main conclusions are organized in four parts, in order to emphasize the PDE
formulation, the constraint geometry, the variational selection mechanism, and
the benchmark spectral matching theory.

\begin{theorem}\label{thm:PDE-formulation}
Let $n\ge2$, $R>0$, and assume $p>n/2$.  Let $Q(x)=q(|x|)$ be a real radial
potential with $q\in L^p_w(0,R)$.  Then the radial spectral branch of
$-\Delta+Q$ in $B_R$ is exactly represented by the singular weighted problem
\eqref{eq:radial-sl-main}.  Its radial eigenvalues are simple, the $m$-th radial
PDE eigenfunction has exactly $m-1$ spherical nodal hypersurfaces, and their
radii are precisely
\[
        0<T_{1,m}(q)<\cdots<T_{m-1,m}(q)<R .
\]
Moreover, the PDE optimization problem \eqref{eq:pde-full-problem} is equivalent,
up to the harmless factor $|\mathbb S^{n-1}|^{1/p}$, to the weighted variational
problem \eqref{eq:main-full-problem}.  Thus the finite weighted inverse nodal
problem is the PDE-natural radial inverse problem for spherical nodal data.
\end{theorem}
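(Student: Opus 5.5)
The plan is to prove the statement in three layers: an exact unitary reduction of the radial sector, a spectral and oscillation analysis of the resulting singular weighted problem, and the polar identity for norms.

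\textbf{Radial reduction.} First define the quadratic form
\[
a_Q[u]=\int_{B_R}\bigl(|\nabla u|^2+Q|u|^2\bigr)\,dx \quad\text{on } H^1_0(B_R).
\]
Since $p>n/2$, the Sobolev embedding $H^1_0\hookrightarrow L^{2p^\dagger}(B_R)$ (note $2p^\dagger<2^*$) combined with H\"older's inequality gives $\int|Q||u|^2\le\|Q\|_p\|u\|_{2p^\dagger}^2$. Gagliardo--Nirenberg interpolation then makes this form-bounded with relative bound $0$. Hence $a_Q$ is closed and bounded below, and the associated self-adjoint operator has compact resolvent. Because $Q$ is radial, the form commutes with rotations, so the closed subspace of radial functions reduces the operator.

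Under the unitary map $\mathcal U$, the radial part of $H^1_0(B_R)$ corresponds to
\[
H=\{y:\ \textstyle\int_0^R(|y'|^2+|y|^2)w\,dr<\infty,\ y(R)=0\},
\]
and the form becomes $\int_0^R(|y'|^2+q|y|^2)w\,dr$. Its operator is exactly \eqref{eq:radial-sl-main}. The regularity condition $y'(0)=0$ is recovered as the natural boundary behaviour of $H$ at the limit-circle/limit-point endpoint $r=0$.

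\textbf{Simplicity of eigenvalues.} For $n\ge 2$, $w^{-1}=r^{1-n}$ is not integrable at $0$. Using $|q|r^{n-1}\in L^1$ locally (by H\"older, since $p>n/2>1$ and $r^{n-1}\in L^{p^\dagger}$ on bounded sets), I will show by a Volterra/Picard iteration on $(wy')'=w(q-\lambda)y$ that the solution space satisfying the regularity condition at $0$ is one-dimensional. Concretely, $y(0)=1$ and $y(r)=1+\int_0^r s^{1-n}\int_0^s t^{n-1}(q-\lambda)y\,dt\,ds$. The kernel estimate requires $\int_0^r s^{1-n}\int_0^s t^{n-1}|q|\,dt\,ds<\infty$, which follows from H\"older with $p>n/2$. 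This integrability is the main technical point, and I expect it to be the principal obstacle: the exponent condition must be checked carefully there. Every other solution behaves like $\log r$ (for $n=2$) or $r^{2-n}$ (for $n\ge 3$) and is not in the form domain. Hence each eigenvalue is simple, and eigenfunctions are real and $C^1$ on $(0,R]$ with $y(0)\neq 0$ or determined by $y(0)$.

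\textbf{Oscillation count.} To show the $m$-th eigenfunction has exactly $m-1$ zeros, I will use a Pr\"ufer angle on $(0,R]$: set $wy'=\rho\cos\theta$ and $y=\rho\sin\theta$, with initial value $\theta(0^+)=\pi/2$ forced by the regular solution. The continuity method is then applied in the parameter $\lambda$:
\begin{itemize}
\item $\theta(R;\lambda)$ is strictly increasing in $\lambda$, by Sturm comparison applied to the integrated equation;
\item $\theta(R;\lambda)$ is continuous in $\lambda$, by continuous dependence of the Volterra solution on $\lambda$;
\item $\theta(R;\lambda)$ tends to $\pi/2$ as $\lambda\to-\infty$, using the form lower bound;
\item $\theta(R;\lambda)$ tends to $\infty$ as $\lambda\to\infty$.
\end{itemize}
Eigenvalues are exactly the solutions of $\theta(R;\lambda)=k\pi$. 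The $m$-th one has $\theta(R)=m\pi$, which gives $m-1$ interior zeros, each simple because $y$ and $wy'$ do not vanish simultaneously.

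\textbf{Nodal set and norm equivalence.} Since $u_m=\mathcal U E_m$ is a nonzero multiple of $E_m(|x|)$, its zero set in $B_R$ is the union of the spheres $|x|=T_{j,m}(q)$. The point $x=0$ is excluded because $E_m(0)\ne0$ by uniqueness of the regular solution. Finally, \eqref{eq:pde-weighted-norm} shows that $Q\mapsto q$ is a bijection from $\mathcal R L^p(B_R)$ onto $L^p_w(0,R)$ scaling norms by $|\mathbb S^{n-1}|^{1/p}$, and $\mathcal T_m(Q)=T_m(q)$ by definition. Therefore the two problems \eqref{eq:pde-full-problem} and \eqref{eq:main-full-problem} have the same admissible sets and proportional objectives, which proves the equivalence.
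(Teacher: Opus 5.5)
Your proposal is correct. Its architecture matches the one the paper assembles for this theorem from Lemma~\ref{lem:form-bounded}, Proposition~\ref{prop:radial-oscillation}, the radial identification in the introduction and the polar identity \eqref{eq:pde-weighted-norm}: form bound and compact resolvent, reduction to the radial sector, simplicity, oscillation count, norm equivalence. Where you differ is in how you treat the singular endpoint.

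\textbf{Simplicity.} The paper never looks at $r=0$ here. Two solutions on $(0,R]$ that both vanish at the regular endpoint $R$ have zero weighted Wronskian there, hence everywhere, so they are linearly dependent. This is shorter than your Volterra construction plus the argument that the $r^{2-n}$ (or $\log r$) solution leaves the form domain, and it uses nothing about $q$ near $0$.

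\textbf{Zero count.} Here the paper only cites a singular Sturm oscillation theorem, or a regular approximation on $[\delta,R]$ with $\delta\downarrow0$. It never rules out zeros escaping into the origin in that limit. Your analysis at $0$ closes this gap. It fixes $\theta(0^+)=\pi/2$ and kills the boundary term at $0$ in the Sturm comparison, which gives monotonicity in $\lambda$. It also yields $E_m(0)\neq0$, so the origin is not in the nodal set; the paper uses this implicitly when it identifies the zero set of $u_m$ with the union of spheres.

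\textbf{The integrability you flag.} It does hold. By Fubini the double integral is at most $(n-2)^{-1}\int_0^r t|q(t)|\,dt$ for $n\ge3$ (with an extra factor $\log(r/t)$ when $n=2$). H\"older against $t^{n-1}\,dt$ reduces this to finiteness of $\int_0^r t^{(2-n)p^\dagger+n-1}\,dt$, which holds exactly when $p>n/2$.

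\textbf{One wrong auxiliary claim.} As $\lambda\to-\infty$, the angle $\theta(R;\lambda)$ tends to $0^+$, not to $\pi/2$. Already for $q=0$, $n=3$, the solution $y=\sinh(kr)/(kr)$ gives $y/(wy')\to0$ at $r=R$ as $k\to\infty$. The error is harmless. What the counting needs is $0<\theta(R;\lambda)<\pi$ for $\lambda$ below the form lower bound $\gamma$, and the form bound supplies this. If the regular solution vanished at some $r_0\in(0,R]$, extending it by zero beyond $r_0$ would give an element of $H^1_{w,R}(0,R)$ with $a_q[y]=\lambda\|y\|_{2,w}^2$, forcing $\lambda\ge\gamma$.
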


\begin{theorem}\label{thm:constraint-geometry-main}
Let $n\ge2$, $R>0$, and assume $p>n/2$.
\begin{enumerate}[label=\textup{(\roman*)}]
\item Each nodal map $q\mapsto T_{i,m}(q)$ is locally $C^1$ on $L^p_w(0,R)$ and
weakly sequentially continuous on bounded sets.
\item Every admissible finite radial nodal vector $\tau\in\Omega_m$ is realized
by a bounded piecewise constant radial potential.  Consequently, the constraint
sets $\mathcal S^{(m)}_\tau$ and $\mathcal S_{i,m}(T^*)$ are nonempty.
\item The full nodal constraint set $\mathcal S^{(m)}_\tau$ is a $C^1$ submanifold
of $L^p_w(0,R)$ of codimension $m-1$, and $\mathcal S_{i,m}(T^*)$ is a $C^1$
hypersurface.
\item {Before imposing the variational selection by a
target potential \(q_0\), prescribed finite radial nodal radii do not determine
a unique potential.  The nodal data are invariant under constant shifts of the
potential.  Moreover, at regular points of any finite nodal map, the
corresponding level set is locally infinite-dimensional.}
\end{enumerate}
\end{theorem}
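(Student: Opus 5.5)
The plan is to treat the four items in order, with the spectral facts packaged in Theorem~\ref{thm:PDE-formulation} (simplicity, exactly $m-1$ simple interior zeros) as the starting point.

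\emph{Item (i): $C^1$ regularity.} Since $p>n/2$, the quadratic form $\int_0^R (w|y'|^2+wq|y|^2)\,dr$ is a form-bounded perturbation of the free radial Dirichlet form, with relative bound zero, on $H^1_{0,w}$. This comes from the weighted Sobolev embedding $H^1_{0}(B_R)\hookrightarrow L^{2p^\dagger}(B_R)$ applied to radial functions. Hence $q\mapsto\lambda_m(q)$ and $q\mapsto E_m(\cdot;q)$ are analytic (in particular $C^1$) from $L^p_w$ into $H^1_w$, by Kato--Rellich for simple eigenvalues. Local regularity of the ODE on $[\delta,R]$ then upgrades this to $C^1$ dependence of $E_m$ in $C^1[\delta,R]$, since there $y'$ is absolutely continuous and $q\in L^1_{loc}$. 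Because $E_m'(T_{i,m})\neq0$, the implicit function theorem applied to $E_m(t;q)=0$ gives $T_{i,m}\in C^1$, with derivative
\[
T_{i,m}'(q)h=-\frac{\partial_q E_m(T_{i,m};q)h}{E_m'(T_{i,m};q)}.
\]

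\emph{Item (i): weak sequential continuity.} Take $q_k\rightharpoonup q$ weakly in $L^p_w$ with $\|q_k\|\le M$. The min--max bounds give uniformly bounded eigenvalues and uniform $H^1_w$ bounds on eigenfunctions. On compact subintervals of $(0,R]$, the equation makes $E_m(\cdot;q_k)$ precompact in $C^1$. The term $q_kE_k\to qE$ converges weakly, because $E_k\to E$ strongly in $L^{p^\dagger}_w$ by compact embedding, which again uses $p>n/2$. Therefore any limit is an eigenfunction of $q$ with $m-1$ zeros, and simplicity forces convergence of the whole sequence. Uniform $C^1$ convergence away from $0$, together with simplicity of the zeros, gives convergence of the nodes. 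The remaining issue is to rule out zeros escaping to $r=0$. I would do this with a Sturm comparison or Prüfer-angle argument near the origin: for small $\delta$, uniformly in $\|q\|\le M$, $E_m$ has no zero in $(0,\delta)$. This follows from the Hölder bound $\int_0^\delta |q|\,r\,dr\le C\delta^{2-n/p}M$, which controls the variation of the Prüfer angle.

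\emph{Item (ii): realization.} I would build the potential by gluing. On each nodal interval $I_j=(\tau_{j-1},\tau_j)$, with $\tau_0=0$ and $\tau_m=R$, choose a constant $c_j$ so that the principal Dirichlet eigenvalue of $-(wy')'/w+c_j$ on $I_j$ equals a common value $\lambda$. On $I_1$ use the regularity condition at $0$ instead of a Dirichlet condition. Let $\mu_j$ be the principal eigenvalue of the free problem on $I_j$ and set $c_j=\lambda-\mu_j$. The corresponding positive eigenfunctions, with alternating signs, are glued at the $\tau_j$, scaling each piece so that $wy'$ is continuous across the junctions. This is possible because every piece has a nonzero derivative at its endpoints. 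The result is an eigenfunction of the piecewise constant potential $q=\sum c_j\chi_{I_j}$ with exactly $m-1$ zeros at $\tau$. By Theorem~\ref{thm:PDE-formulation} it must be the $m$-th eigenfunction. Nonemptiness of $\mathcal S_{i,m}(T^*)$ follows by choosing any $\tau$ with $\tau_i=T^*$.

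\emph{Items (iii) and (iv).} For (iii), I would apply the regular value theorem: it suffices that $DT_m(q):L^p_w\to\R^{m-1}$ is surjective. Using the derivative formula, I would test against $h$ supported in small neighborhoods of different nodes, localized so that the triangular or nondegenerate structure of the matrix shows full rank. This surjectivity is the step I expect to be the \emph{main obstacle}. Rather than computing the rank directly, I would try first to use a perturbation $h$ supported in a single nodal interval $I_j$. Such an $h$ moves the two adjacent nodes in a controlled way. Combined with the free constant shift, this yields a bidiagonal system of nonzero derivatives, which I would show is invertible. For the single node case, nonvanishing of $DT_{i,m}(q)$ suffices, and it follows from the same localized perturbation. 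For (iv), $T_m(q+c)=T_m(q)$ because adding $c$ only shifts $\lambda$. The level set is a $C^1$ submanifold of finite codimension in the infinite-dimensional space $L^p_w$, so it is locally infinite-dimensional; equivalently, $\ker DT_m(q)$ has finite codimension.
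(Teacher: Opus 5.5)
Your treatment of (i), (ii) and (iv) is sound and is essentially the paper's argument. For (i) you use Kato perturbation of a simple eigenvalue plus the implicit function theorem for $E_m(T;q)=0$. For (ii) you glue principal eigenfunctions with alternating signs and matched flux $wy'$. For (iv) you combine shift invariance with the finite codimension of a level set in an infinite-dimensional space.

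The one real difference is in weak continuity. You identify the index of the limit eigenvalue by counting zeros, which is why you need a uniform no-zero estimate near $r=0$. Your bound $\int_0^\delta|q|\,r\,dr\le C\delta^{2-n/p}M$ does deliver it, e.g.\ through the Volterra equation for the regular solution. The paper instead obtains $\mu=\lambda_m(q)$ from a min--max $\limsup/\liminf$ argument with $m$ orthogonal limits; the exact zero count of $E_m(\cdot;q_k)$ then rules out escaping zeros for free. Both routes work.

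The genuine gap is in (iii), at exactly the step you flag, and the heuristic you propose to close it is false. A perturbation $h$ supported in one nodal interval $I_k$ does not move only the two adjacent nodes. The constant shift also adds nothing to the rank, because $D_qT_{j,m}(q)[1]=0$. The reason is that the eigenvalue moves by $\lambda'=\int_0^R hE^2w\,dr$, and this term enters the linearized equation $-(wz')'+w(q-\lambda)z=w(\lambda'-h)E$ on all of $(0,R)$.

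Your implicit formula $-\partial_qE_m(T;q)h/E_m'(T;q)$ hides this; what you need is the explicit nodal derivative. Integrate the Wronskian identity $\frac{d}{dr}[w(zE'-Ez')]=w(\lambda'-h)E^2$ over $(0,T_j)$, use $z(T_j)+E'(T_j)D_qT_{j,m}(q)[h]=0$, and insert $\lambda'$. This gives
\[
D_qT_{j,m}(q)[h]=\frac{1}{w(T_j)E'(T_j)^2}\Bigl(B_j\int_0^{T_j}hE^2w\,dr-A_j\int_{T_j}^R hE^2w\,dr\Bigr),
\qquad A_j=\int_0^{T_j}E^2w\,dr,\ B_j=1-A_j .
\]
Hence $h=\chi_{I_k}$ moves every node. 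Nodes $T_j$ with $j\ge k$ move with coefficient proportional to $B_j>0$, and those with $j\le k-1$ with coefficient proportional to $-A_j<0$. The resulting matrix is a full staircase, not bidiagonal.

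The correct localized test functions are $h_k=\chi_{I_k}/e_k-\chi_{I_{k+1}}/e_{k+1}$, with $e_k=\int_{I_k}E^2w\,dr$. For these, $D_qT_{j,m}(q)[h_k]=\delta_{jk}/(w(T_k)E'(T_k)^2)$. This is the same mechanism as the paper's independence argument. There, $\sum_jc_jH_jE^2=0$ forces the step function $\sum_jc_jH_j$ to vanish, and only $H_j$ jumps at $T_j$.

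Without this explicit formula you have established neither surjectivity of $DT_m(q)$ nor even $D_qT_{i,m}(q)\neq0$, which the hypersurface claim for $\mathcal S_{i,m}(T^*)$ requires.
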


\begin{theorem}\label{thm:variational-selection-main}
Let $q_0\in L^p_w(0,R)$.  The full-vector and one-node optimization problems
\eqref{eq:main-full-problem} and \eqref{eq:main-single-problem} admit minimizers.
If a minimizer is not equal to $q_0$, then it satisfies a Lagrange multiplier
equation.  On each nodal interval, a suitable rescaling of the optimal radial
eigenfunction solves a piecewise nonlinear radial Schr\"{o}dinger equation.  In the
single-node case the nonlinear term has opposite signs on the two sides of the
prescribed nodal sphere, and a cross-node mass-balance condition holds.  If
$q_0$ is locally absolutely continuous, the corresponding radial energy satisfies
a dissipative identity; in particular, for constant $q_0$ the Hamiltonian first
integral of the regular interval problem is replaced by a strictly dissipative
radial energy law.
\end{theorem}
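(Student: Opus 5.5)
The plan has four stages: existence by the direct method, a Lagrange multiplier rule, piecewise rescaling of the optimal eigenfunction, and a radial energy computation.

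\textbf{Existence.} I will use the direct method in the reflexive space $L^p_w(0,R)$, $1<p<\infty$.
By Theorem~\ref{thm:constraint-geometry-main}(ii) the constraint sets are nonempty, so the infimum $d$ is finite. A minimizing sequence $(q_k)$ satisfies $\|q_k\|_{p,w}\le \|q_0\|_{p,w}+d+1$, hence is bounded and has a weakly convergent subsequence $q_k\rightharpoonup q^*$.
By the weak sequential continuity of the nodal maps on bounded sets (Theorem~\ref{thm:constraint-geometry-main}(i)), $T_{i,m}(q^*)=\lim T_{i,m}(q_k)$. Hence $q^*$ lies in $\mathcal S^{(m)}_\tau$, respectively in $\mathcal S_{i,m}(T^*)$.
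Weak lower semicontinuity of the norm then gives $\|q^*-q_0\|_{p,w}\le d$, so $q^*$ is a minimizer.

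\textbf{Multiplier rule.} If $q^*\neq q_0$, the functional $J(q)=\frac1p\|q-q_0\|_{p,w}^p$ is $C^1$ near $q^*$, with derivative represented in the $L^p_w$--$L^{\pd}_w$ duality by $|q^*-q_0|^{p-2}(q^*-q_0)$.
By Theorem~\ref{thm:constraint-geometry-main}(iii) the constraint is a $C^1$ submanifold defined by nodal maps whose derivatives are linearly independent. The Lagrange multiplier rule therefore gives
\[
|q^*-q_0|^{p-2}(q^*-q_0)=\sum_j \mu_j\, G_j,
\]
where $G_j$ is the gradient of $T_{j,m}$ at $q^*$.

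\textbf{Nonlinear equation on nodal intervals.} The radial nodal derivative formula established earlier writes each $G_j$ as an explicit function of $E_m(\cdot;q^*)$. Inverting the power map $s\mapsto |s|^{p-2}s$ then expresses $q^*-q_0$ on each nodal interval $(T_{k-1},T_k)$ as a constant times a power of $E_m^2$.
Substituting this into $-(wE')'+w q^* E=\lambda wE$ and rescaling $U=\kappa E_m$ with an interval-dependent constant $\kappa$ gives
\[
-U''-\tfrac{n-1}{r}U'+q_0U\pm |U|^{2\pd-2}U=\lambda U .
\]
In the one-node case the expected shape of the derivative is $G$ proportional to a positive multiple of $E^2$ on one side and a negative multiple on the other. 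If so, the sign flips across $T^*$.
The mass-balance condition should follow from the normalization or orthogonality relation built into the derivative formula, namely that $\frac{d}{d\epsilon}T$ vanishes for constant perturbations. Testing the multiplier equation against the constant $1$ gives $\int_0^R|q^*-q_0|^{p-2}(q^*-q_0)w\,dr=0$, which, rewritten via $U$, is the cross-node balance of the weighted masses $\int |U|^{2\pd}w$.

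\textbf{Energy identity.} For locally absolutely continuous $q_0$, I will set
\[
H=\tfrac12U'^2+\tfrac12(\lambda-q_0)U^2\mp\tfrac{1}{2\pd}|U|^{2\pd}
\]
and differentiate using the equation, obtaining
\[
H'=-\tfrac{n-1}{r}U'^2-\tfrac12 q_0'U^2 .
\]
For constant $q_0$ this is strictly dissipative, since $U'\not\equiv0$ on any open set by uniqueness for the ODE.

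\textbf{Main obstacle.} I expect the hardest step to be justifying the rescaling near the singular endpoint $r=0$ and at the nodes. Exact exponents are needed so that $|E|^{2/(p-1)}$ is integrable against $w$, and the derivative formula must have the right form near $r=0$. I would rely on the local regularity of $E_m$ guaranteed by $p>n/2$, together with $E_m'(0)=0$.
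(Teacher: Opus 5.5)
Your plan follows the paper's proof everywhere except the mass-balance step: the direct method via nonemptiness, weak sequential continuity of the nodal maps and weak lower semicontinuity, the Lagrange multiplier rule with the nodal derivative formula of Theorem~\ref{thm:derivative}, inversion of $\phip$ with a constant rescaling on each nodal interval, and the same energy computation. Two small points. First, the sign flip you call ``expected'' is already supplied by Theorem~\ref{thm:derivative}. The one-node gradient is $B E^2/(w(T^*)E'(T^*)^2)$ on $(0,T^*)$ and $-A E^2/(w(T^*)E'(T^*)^2)$ on $(T^*,R)$, with $A,B>0$, and the multiplier is nonzero because $q^*\ne q_0$. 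In the full-vector case you must also allow $\Gamma_k=0$, where the equation on $I_k$ is linear. Second, the ``main obstacle'' you flag is not one. The rescaling is by constants, $|E|^{2\pd}$ is $w$-integrable by the embedding in Lemma~\ref{lem:form-bounded}, and the origin was already handled in Theorem~\ref{thm:derivative}.

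The genuine error is in the mass balance. Testing the multiplier equation against $1$ does give $\int_0^R\phip(q^*-q_0)\,w\dd r=0$. Equivalently, $q^*+a$ is admissible for every $a\in\R$ by Theorem~\ref{thm:global-nonunique}, so $a\mapsto\norm{q^*+a-q_0}_{p,w}^p$ is minimal at $a=0$. Now use $q^*-q_0=\varepsilon|U|^{2\pd-2}$ on $(0,T^*)$, $q^*-q_0=-\varepsilon|U|^{2\pd-2}$ on $(T^*,R)$, and $(2\pd-2)(p-1)=2$. These give $\phip(q^*-q_0)=\varepsilon U^2$ on $(0,T^*)$ and $-\varepsilon U^2$ on $(T^*,R)$. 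So the identity reads $\int_0^{T^*}U^2w\dd r=\int_{T^*}^RU^2w\dd r$, which is \eqref{eq:mass-balance}.

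It is not a balance of $\int|U|^{2\pd}w\dd r$. That density equals $|q^*-q_0|^p$, and its total is $\norm{q^*-q_0}_{p,w}^p$ (Corollary~\ref{cor:norm-identity}). Nothing forces it to split equally across $T^*$, so your conclusion as written is false.

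With the exponent corrected, your derivation is a more conceptual alternative to the paper's. The paper reads off $a$ and $b$ from the two values of the gradient and checks $a^{1/(\pd-1)}A=b^{1/(\pd-1)}B$ directly. Yours shows the mass balance is the optimality condition in the shift direction, which the constraint cannot detect, and it needs no explicit constants. It also extends verbatim to the full-vector problem, giving $\sum_{k:\Gamma_k\ne0}\operatorname{sgn}(\Gamma_k)\int_{I_k}U_k^2w\dd r=0$, an analogue the paper does not record.
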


\begin{theorem}\label{thm:benchmark-matching-main}
Assume $m=2$, $i=1$, and $q_0\equiv c$.  Let $T_c=T_{1,2}(c)$.
\begin{enumerate}[label=\textup{(\roman*)}]
\item The constant potential is the unique global optimizer exactly at its own
nodal position $T_c$.  If $T^*\ne T_c$, then the optimal distance is strictly
positive, while {the unoptimized nodal constraint still contains infinitely many
admissible potentials}.
\item For arbitrary $p>n/2$, the benchmark variational problem has a sharp
near-benchmark first-order selection law: every global minimizer, whether or not
it is unique, has the same leading profile determined by the dual nodal gradient.
\item In the Hilbert case $p=2$, $n=2,3$, and under a local $C^2$ regularity
assumption on the benchmark nodal map near $c$, the first-order selection law is
upgraded to uniqueness of the variationally selected optimizer for all nontrivial
prescribed nodes sufficiently close to $T_c$.
\item In the Hilbert benchmark setting, the remaining global
uniqueness/multiplicity question is reduced to the minimizer structure of
principal-eigenvalue spectral matching functionals on the radial nodal
subintervals.  In particular, for a finite nodal vector the matching functional
is obtained by matching the principal eigenvalues on all nodal subintervals; a
strict matching minimum yields uniqueness, whereas multiple matching minima yield
multiplicity.
\end{enumerate}
\end{theorem}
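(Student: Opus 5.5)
The plan is to treat Theorem~\ref{thm:benchmark-matching-main} as a summary statement: each part will be assembled from the spectral, nodal and variational results of the later sections, which I take for granted. These are the local $C^1$ nodal maps with the radial derivative formula, weak sequential continuity, existence of minimizers, and the Lagrange multiplier equation of Theorem~\ref{thm:variational-selection-main}. The first step is to make the benchmark data explicit. For $q_0\equiv c$ we have $E_2(\cdot;c)=E_2(\cdot;0)$, so $T_c=T_{1,2}(0)$ is the first zero of the second radial Bessel-type eigenfunction $r^{1-n/2}J_{n/2-1}(j r/R)$. The nodal gradient $G_c:=\nabla T_{1,2}(c)\in L^{\pd}_w(0,R)$ is then given by the derivative formula in terms of $E_2(\cdot;c)^2$ with opposite weights on $(0,T_c)$ and $(T_c,R)$. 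I will check that $G_c\neq0$.

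For (i), if $T^*=T_c$ then $q=c$ is feasible with distance $0$. Any other optimizer $q$ would satisfy $\|q-c\|_{p,w}=0$, hence $q=c$ a.e., which gives uniqueness. If $T^*\neq T_c$, the infimum is attained by Theorem~\ref{thm:variational-selection-main}. Its value $\|q^*-c\|_{p,w}$ cannot vanish, since then $T_{1,2}(q^*)=T_c\ne T^*$. The infinitely many admissible potentials come from Theorem~\ref{thm:constraint-geometry-main}(ii) and (iv): one realizer plus all constant shifts, or the infinite-dimensional hypersurface $\mathcal S_{1,2}(T^*)$.

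For (ii), set $\delta=T^*-T_c$. Any feasible $q$ satisfies $T_{1,2}(q)-T_c=\langle G_c,q-c\rangle+o(\|q-c\|)$. Comparing with the explicit competitor $c+t\,\phid(G_c)$, where $\phid(g)=|g|^{\pd-2}g$ is the dual (norming) element, we get $d(T^*)=|\delta|/\|G_c\|_{\pd,w}+o(|\delta|)$. Every minimizer $q^*$ therefore has $\|q^*-c\|_{p,w}=O(|\delta|)$ and almost equality in Hölder's inequality for $\langle G_c,q^*-c\rangle$. Uniform convexity of $L^p_w$ ($1<p<\infty$) then forces
\[
\frac{q^*-c}{\|q^*-c\|_{p,w}}\to \operatorname{sgn}(\delta)\frac{\phid(G_c)}{\|\phid(G_c)\|_{p,w}} .
\]
This is the common leading profile. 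Sharpness follows from the matching upper bound.

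For (iii), with $p=2$ the Lagrange equation reads $q^*-c=\mu\,\nabla T_{1,2}(q^*)$. Given $C^2$ regularity near $c$, the map $F(q,\mu)=(q-c-\mu\nabla T(q),\,T(q)-T^*)$ can be handled by the implicit function theorem. It is cleaner to work in the rescaled variables $q=c+\delta\,h$, $\mu=\delta\nu$, solving near $(h_0,\nu_0)=(G_c/\|G_c\|^2,1/\|G_c\|^2)$. The Jacobian there is invertible because $\langle G_c,h_0\rangle\ne0$ and the perturbation $\delta\nu D^2T$ is small. Part (ii) places every minimizer in this neighbourhood, so the minimizer is unique. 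The restriction $n=2,3$ enters through the requirement that $\nabla T\in L^2_w$ be $C^1$, i.e.\ $E_2^2$-type terms stay controlled; I will invoke the assumed $C^2$ hypothesis together with $p=2>n/2$.

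For (iv), on each nodal interval $(\tau_{j-1},\tau_j)$ the optimal potential must make $\lambda_2(q)$ the principal Dirichlet eigenvalue, with $\tau_0=0$ and the Neumann-type regularity condition at $0$. Minimizing $\|q-c\|_{2,w}^2$ then decouples into subinterval problems: for a fixed common value $\lambda$, minimize the distance subject to the constraint that the principal eigenvalue on $I_j$ equals $\lambda$. Summing the costs gives a one-variable matching functional $\Phi(\lambda)$. Minimizers of the full problem correspond bijectively to minimizers of $\Phi$ paired with subinterval minimizers. The correspondence goes through the Lagrange structure, which makes each piece unique for fixed $\lambda$ by strict convexity of the principal-eigenvalue constraint set in $L^2$. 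Hence a strict minimum of $\Phi$ gives uniqueness and multiple minima give multiplicity.

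I expect the main obstacle to be the uniform convexity step in (ii) and the Jacobian estimate in (iii) near the singular endpoint. There I would use the weighted regularity of $E_2$ at $r=0$ to control the remainder uniformly.
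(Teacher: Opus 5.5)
Your treatment of (i) and (ii) is essentially the paper's (Theorems~\ref{thm:benchmark} and \ref{thm:banach-first-order}). For (iii) you take a different but workable route:
\begin{itemize}
\item \emph{The paper} works in the level-set chart $q=c+v+s(v,\tau)e$. There the reduced objective $\tfrac12\|v\|_H^2+\tfrac12 s(v,\tau)^2$ has Hessian $I_{H_0}$, so it is uniformly convex on a fixed neighbourhood. The crude bound $\|q-c\|_H=O(|\tau-T_c|)$ then suffices to put all global minimizers in that neighbourhood.
\item \emph{You} solve the rescaled multiplier system $h=\nu\nabla F(c+\delta h)$, $\int_0^1 DF(c+t\delta h)[h]\,dt=1$ by the implicit function theorem at $(G_c/\|G_c\|^2,1/\|G_c\|^2,0)$, and localize minimizers with the profile law from (ii).
\end{itemize}
Your route works once you add that $\nu^*\to\nu_0$ (pair $h^*=\nu^*\nabla F(q^*)$ with $\nabla F(q^*)$). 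It costs you the multiplier rule and the sharp first-order law, which the convexity argument does not need. Also, the restriction $n\in\{2,3\}$ is simply $p=2>n/2$.

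The genuine gap is in (iv). You claim each one-sided extremal problem has a unique minimizer for fixed $\sigma$ ``by strict convexity of the principal-eigenvalue constraint set''. The level set $\{q_k:\lambda_1(q_k;I_k)=\sigma\}$ is not convex. Since $q\mapsto\lambda_1(q;I_k)$ is an infimum of functionals affine in $q$, it is concave. Only the superlevel set $K_\sigma=\{\lambda_1(\cdot;I_k)\ge\sigma\}$ is closed and convex, and the level set is its boundary.
\begin{itemize}
\item If $\sigma\ge\lambda_1(c;I_k)$, the extremal problem is the metric projection of $c$ onto $K_\sigma$, and uniqueness does hold.
\item If $\sigma<\lambda_1(c;I_k)$, then $c$ is an interior point of $K_\sigma$. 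You are minimizing the distance from an interior point to the boundary of a convex set, a non-convex problem with no uniqueness mechanism.
\end{itemize}
This second regime cannot be avoided. For $T\ne T_c$ one has $\lambda_L(c;T)\ne\lambda_R(c;T)$, and every minimizing matching value lies strictly between them. Outside that interval $D_L$ and $D_R$ are monotone in the same direction. At an endpoint one extremal is $c$ itself, and the other satisfies $q-c=\mu E^2$ with $\mu\neq0$. Shifting both extremals by a small constant towards the other endpoint then lowers $M_T$ at first order. So one of the two one-sided problems is always of interior type.

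Consequently your conclusion ``a strict minimum of $\Phi$ gives uniqueness'' does not follow. The paper does not prove one-sided uniqueness. Corollaries~\ref{cor:matching-criterion} and \ref{cor:strict-matching} assume it at the minimizing values, and they list non-unique one-sided extremals as a separate source of multiplicity. You must either carry that hypothesis or prove it. On the interior side this would amount to uniqueness of positive solutions of $-w^{-1}(wU')'+cU-U^3=\sigma U$ on the subinterval with the one-sided boundary conditions, which is a nontrivial problem in its own right. Your multiplicity direction is fine, since distinct matching values give glued potentials with distinct principal eigenvalues.

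Two smaller corrections:
\begin{itemize}
\item For a general nodal vector the common value is $\lambda_m(q)$, not $\lambda_2(q)$.
\item Your ``bijective correspondence'' needs the converse gluing step, not the Lagrange structure. That step is: alternating signs, matching of the weighted flux $wE'$ at each interface, and the oscillation theorem to identify the index, as in Theorem~\ref{thm:multi-node-matching}.
\end{itemize}
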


Several comments clarify the scope of these results.  First, the radial reduction
is exact for radial potentials and radial eigenfunctions: the nodal radii in
\eqref{eq:radial-sl-main} are precisely the radii of the spherical nodal sets of
the original Schr\"{o}dinger operator in the ball.  Second, the constructive
realization theorem shows that arbitrary admissible finite spherical nodal radii
are compatible with some bounded radial potential; no hidden compatibility
condition is imposed.  Third, the results separate two different issues.
{Before the target potential \(q_0\) and
the variational selection criterion are introduced, nodal radii alone do not
select a unique potential.  Constant shifts already leave all nodal radii
unchanged, and finite nodal constraints have locally infinite-dimensional level
sets at regular points.} The variational problem, however, selects potentials
with a rigid Euler--Lagrange structure and, near the constant benchmark in the
Hilbert regime, selects a unique optimizer.  This distinction is the central
mechanism behind the finite radial inverse nodal theory developed in this paper.

{We close the introduction by emphasizing the main
contributions of the paper and by clarifying their relation to the
finite inverse nodal problem on a regular interval.  First, the
nodal data considered here come from the PDE geometry of the
Schr\"{o}dinger operator in a ball.  Under the radial symmetry
assumption, the nodal sets of radial eigenfunctions are spherical
hypersurfaces, and the finite data are precisely their radii.  Thus
the prescribed nodes are not isolated interval zeros introduced a
priori, but geometrically meaningful nodal radii inherited from the
original PDE.

Second, the paper develops a singular weighted variational framework adapted to
this radial PDE setting.  The exact radial reduction leads to a weighted
Sturm--Liouville problem with $w(r)=r^{n-1}$ and with a singular endpoint at
$r=0$.  The weighted norm in $L^p_w(0,R)$ is exactly the radial expression of the
natural $L^p(B_R)$-distance between radial potentials.  Within this framework we
establish spectral and nodal regularity, weak closedness, exact realization of
arbitrary admissible finite nodal vectors, and the finite-codimensional $C^1$
geometry of the corresponding constraint sets.

Third, the paper separates the intrinsic non-uniqueness of finite nodal data from
the rigidity created by variational selection.  Finite radial nodal radii do not
determine the potential; indeed, constant shifts preserve all nodal radii, and
the finite-data level sets are locally infinite-dimensional at regular points.
Nevertheless, once a target potential is prescribed and the distance to it is
minimized, the selected potentials satisfy a rigid Euler--Lagrange structure.  In
particular, after a suitable rescaling of the optimal radial eigenfunction, one
obtains piecewise nonlinear radial Schr\"{o}dinger equations and, in the
single-node case, a cross-node mass-balance condition.

Finally, the radial geometry produces an obstruction to the Hamiltonian
reconstruction used in the regular interval problem and leads instead to a
spectral matching mechanism.  In the interval setting, the constant-target
critical equation admits a Hamiltonian first integral and can be analyzed by
phase-plane methods.  In the radial setting, the term $(n-1)r^{-1}U'$ produces a
dissipative radial energy identity.  This explains why the interval
reconstruction mechanism cannot be transferred directly to the PDE radial
problem.  In the constant benchmark regime we therefore develop a different
rigidity theory, including a near-benchmark first-order selection law,
perturbative uniqueness in the Hilbert radial setting, and a principal-eigenvalue
spectral matching reduction for the remaining global problem.

These points form the conceptual route of the paper.  The PDE nodal
geometry leads to a singular weighted radial formulation; finite
data are intrinsically non-unique; and the constrained variational
principle selects potentials with a rigid radial Euler--Lagrange and
spectral matching structure.  The subsequent sections develop these
ingredients in detail.}

\section{Spectral and nodal regularity in radial spaces}\label{sec:spectral-nodal}

We identify a radial function on $(0,R)$ with the corresponding radial function
on $B_R$.  The natural Hilbert space is $\Ltwo$ with inner product
\[
        \ip{u}{v}_w=\int_0^R u(r)v(r)w(r)\dd r.
\]
Set
\[
H^1_{w,R}(0,R)=\left\{y:\int_0^R\big(|y'|^2+|y|^2\big)w\dd r<\infty,
\quad y(R)=0\right\}.
\]
This is the radial representative of $H^1_0(B_R)$ restricted to radial
functions.  For $q\in\Lpwt$ define the quadratic form
\begin{equation}\label{eq:quadratic-form}
        a_q[y]=\int_0^R |y'|^2w\dd r+\int_0^R qy^2w\dd r,
        \qquad y\in H^1_{w,R}(0,R).
\end{equation}
The associated eigenvalue equation is \eqref{eq:radial-sl-main}, with the radial
regularity condition at the origin and the Dirichlet condition at $R$.
\begin{lemma}[KLMN theorem {\cite[Theorem~6.24]{Teschl2014}}]
Let $\mathcal H$ be a Hilbert space and let
\[
        \mathfrak a:\mathcal Q(\mathfrak a)\times \mathcal Q(\mathfrak a)
        \to \mathbb C
\]
be a semi-bounded closed quadratic form with form domain
$\mathcal Q(\mathfrak a)\subset \mathcal H$. Assume that
\[
        \mathfrak a[u]:=\mathfrak a(u,u)
        \geq
        \gamma\|u\|_{\mathcal H}^2,
        \qquad u\in\mathcal Q(\mathfrak a),
\]
for some $\gamma\in\mathbb R$.

Let
\[
        \mathfrak v:\mathcal Q(\mathfrak a)\times \mathcal Q(\mathfrak a)
        \to \mathbb C
\]
be another quadratic form which is relatively form-bounded with respect to
$\mathfrak a$ with relative bound less than one; namely, there exist constants
\[
        \vartheta\in[0,1),
        \qquad
        \varkappa\geq0,
\]
such that
\[
        |\mathfrak v[u]|
        \leq
        \vartheta\,\mathfrak a[u]
        +
        \varkappa\|u\|_{\mathcal H}^2,
        \qquad u\in\mathcal Q(\mathfrak a).
\]
Then the perturbed form
\[
        \mathfrak a+\mathfrak v
\]
defined on $\mathcal Q(\mathfrak a)$ by
\[
        (\mathfrak a+\mathfrak v)(u,v)
        =
        \mathfrak a(u,v)+\mathfrak v(u,v),
        \qquad u,v\in\mathcal Q(\mathfrak a),
\]
is closed and semi-bounded. More precisely,
\[
        (\mathfrak a+\mathfrak v)[u]
        \geq
        \bigl((1-\vartheta)\gamma-\varkappa\bigr)
        \|u\|_{\mathcal H}^2,
        \qquad u\in\mathcal Q(\mathfrak a).
\]
Consequently, the form $\mathfrak a+\mathfrak v$ gives rise to a
semi-bounded self-adjoint operator in $\mathcal H$. Here
\[
        \mathfrak v[u]=\mathfrak v(u,u).
\]
The constant $\gamma$ is a lower bound of $\mathfrak a$, the number
$\vartheta<1$ is the relative form bound of $\mathfrak v$ with respect to
$\mathfrak a$, and $\varkappa$ is an auxiliary constant in the relative
form-bound estimate.
\end{lemma}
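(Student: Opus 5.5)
The plan is to prove the KLMN statement in three steps: the lower bound, closedness via equivalence of form norms, and then the form representation theorem to obtain the operator.

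\textbf{Lower bound.} By hypothesis, for \(u\in\mathcal Q(\mathfrak a)\),
\[
(\mathfrak a+\mathfrak v)[u]\ \ge\ \mathfrak a[u]-|\mathfrak v[u]|\ \ge\ (1-\vartheta)\mathfrak a[u]-\varkappa\|u\|_{\mathcal H}^2 .
\]
Using \(1-\vartheta>0\) and \(\mathfrak a[u]\ge\gamma\|u\|_{\mathcal H}^2\), this gives
\[
(\mathfrak a+\mathfrak v)[u]\ \ge\ \bigl((1-\vartheta)\gamma-\varkappa\bigr)\|u\|_{\mathcal H}^2 ,
\]
which is the asserted bound. A preliminary point is that \(\mathfrak v[u]\) must be real for the sum to be semi-bounded in the stated sense. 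I would assume \(\mathfrak v\) is hermitian, as is implicit in the application, since there \(\mathfrak v[y]=\int_0^R qy^2w\,dr\) with \(q\) real.

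\textbf{Closedness.} Put \(\mu=1-\gamma\), so that \(\|u\|_{\mathfrak a}^2:=\mathfrak a[u]+\mu\|u\|^2\ge\|u\|^2\) is the form norm in which \(\mathcal Q(\mathfrak a)\) is complete. I would show the new form norm \(\|u\|_{\mathfrak b}^2:=(\mathfrak a+\mathfrak v)[u]+\mu'\|u\|^2\), with \(\mu'\) large, is equivalent to it.

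For the upper bound, \(|\mathfrak v[u]|\le\vartheta\mathfrak a[u]+\varkappa\|u\|^2\) gives \((\mathfrak a+\mathfrak v)[u]\le(1+\vartheta)\mathfrak a[u]+\varkappa\|u\|^2\). Since \(\mathfrak a[u]\le\|u\|_{\mathfrak a}^2\) and \(\|u\|^2\le\|u\|_{\mathfrak a}^2\), it follows that \(\|u\|_{\mathfrak b}^2\le C\|u\|_{\mathfrak a}^2\).

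For the lower bound, \((\mathfrak a+\mathfrak v)[u]\ge(1-\vartheta)\mathfrak a[u]-\varkappa\|u\|^2\). Choosing \(\mu'\ge\varkappa+(1-\vartheta)\mu\) then yields \(\|u\|_{\mathfrak b}^2\ge(1-\vartheta)\|u\|_{\mathfrak a}^2\).

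Hence the two norms are equivalent on the same domain \(\mathcal Q(\mathfrak a)\). Completeness transfers, so \(\mathfrak a+\mathfrak v\) is closed.

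\textbf{Operator.} Finally, I would invoke the representation theorem for closed semi-bounded forms (Friedrichs/Kato): a densely defined, closed, lower semi-bounded hermitian form is the form of a unique self-adjoint operator bounded below by the same constant. Density of \(\mathcal Q(\mathfrak a)\) is implicit, since \(\mathfrak a\) is itself associated to an operator.

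\textbf{Main obstacle.} There is no real analytic difficulty here. The only delicate step is the bookkeeping of constants in the norm equivalence when \(\gamma\) may be negative, which is handled by the shift \(\mu\) and the choice of \(\mu'\). The rest is a direct citation, as in \cite[Theorem~6.24]{Teschl2014}.
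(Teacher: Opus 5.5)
Your proof is correct and follows the standard argument behind the cited result; the paper gives no proof of its own, only the reference to Teschl's Theorem~6.24, whose proof is exactly yours: the lower bound from $\mathfrak a[u]-|\mathfrak v[u]|$, equivalence of the shifted form norms to transfer completeness, and the representation theorem for closed semi-bounded forms. Your explicit hermiticity and density assumptions are genuinely needed for the self-adjointness conclusion and are only implicit in the statement, and the estimate $\mathfrak a[u]\le\|u\|_{\mathfrak a}^2$ tacitly requires $\mu=1-\gamma\ge0$, which you can arrange by using $\min\{\gamma,1\}$ in place of $\gamma$ in the closedness step.
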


\begin{lemma}\label{lem:form-bounded}
Assume $p>\frac{n}{2}$.  For each real-valued $q\in\Lpwt$ the form
$a_q$ is closed and bounded from below on $H^1_{w,R}(0,R)$.  The corresponding
self-adjoint operator in $\Ltwo$ has compact resolvent.
\end{lemma}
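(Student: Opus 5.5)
The plan is to transfer everything to the ball $B_R\subset\mathbb R^n$ through the radial identification $y\mapsto y(|x|)$ and then apply the KLMN lemma with $\mathcal H=\Ltwo$. We take $\mathfrak a[y]=\int_0^R|y'|^2w\dd r$ on $\mathcal Q(\mathfrak a)=H^1_{w,R}(0,R)$ and $\mathfrak v[y]=\int_0^R qy^2w\dd r$.

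The unperturbed form $\mathfrak a$ is, up to the factor $|\mathbb S^{n-1}|$, the Dirichlet integral $\int_{B_R}|\nabla u|^2\dd x$ restricted to radial $u\in H^1_0(B_R)$. Radial functions form a closed subspace of $H^1_0(B_R)$, because radiality is preserved under $L^2$-limits. Hence $\mathfrak a$ is nonnegative (take $\gamma=0$) and closed on $H^1_{w,R}(0,R)$. I will also record here that every $y\in H^1_{w,R}(0,R)$ corresponds to an element of $H^1_0(B_R)$: the condition $y(R)=0$ together with finite weighted energy gives the trace zero on $\partial B_R$, and near the origin no extra condition is needed, since $n\ge2$ and a point has zero capacity.

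The main step is the relative form bound. By Hölder's inequality on $B_R$,
\[
|\mathfrak v[y]|=|\mathbb S^{n-1}|^{-1}\Big|\int_{B_R}Q u^2\dd x\Big|\le |\mathbb S^{n-1}|^{-1}\|Q\|_{L^p(B_R)}\|u\|_{L^{2p^\dagger}(B_R)}^2 .
\]
The condition $p>n/2$ is exactly what makes $2p^\dagger<2^*=2n/(n-2)$ (with any finite exponent allowed when $n=2$). Hence the Sobolev embedding $H^1_0(B_R)\hookrightarrow L^{2p^\dagger}(B_R)$ is compact. An interpolation (Gagliardo--Nirenberg or Ehrling) inequality then gives, for every $\varepsilon>0$,
\[
\|u\|_{L^{2p^\dagger}}^2\le\varepsilon\|\nabla u\|_{L^2}^2+C_\varepsilon\|u\|_{L^2}^2 .
\]
To make the smallness uniform, I split $Q=Q\chi_{\{|Q|>M\}}+Q\chi_{\{|Q|\le M\}}$. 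The bounded part is trivially controlled by $M\|y\|_w^2$. The large part has small $L^p$ norm, and the Hölder estimate applies to it. Choosing $\varepsilon$ appropriately gives a relative bound $\vartheta<1$, in fact arbitrarily small. KLMN then yields that $a_q=\mathfrak a+\mathfrak v$ is closed and bounded below on $H^1_{w,R}(0,R)$, and it defines a self-adjoint operator $A_q$.

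For the compact resolvent, I use the lower bound from KLMN: for $\mu$ large, $a_q[y]+\mu\|y\|_w^2\ge c\,\|y\|_{H^1_{w}}^2$. Therefore $(A_q+\mu)^{-1/2}$ maps $\Ltwo$ boundedly into $H^1_{w,R}(0,R)$. The embedding $H^1_{w,R}(0,R)\hookrightarrow\Ltwo$ is compact, being the radial restriction of Rellich's theorem $H^1_0(B_R)\hookrightarrow L^2(B_R)$. Hence $(A_q+\mu)^{-1}$ is compact.

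I expect the only delicate point to be the justification that the weighted space $H^1_{w,R}(0,R)$ is exactly the radial part of $H^1_0(B_R)$. This requires smooth radial functions to be dense and the singular endpoint $r=0$ to impose no condition. I would argue this by radially mollifying, that is, averaging over rotations of standard mollifications of $u\in H^1_0(B_R)$, which preserves radiality.
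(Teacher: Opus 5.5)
Your proof is correct and is essentially the paper's argument: H\"older plus the subcritical Sobolev embedding on $B_R$ (which is where $p>n/2$ enters), truncation of $q$ to get an infinitesimal form bound, the KLMN theorem, and Rellich's theorem restricted to radial functions for the compact resolvent. Your discussion of why $H^1_{w,R}(0,R)$ is exactly the radial subspace of $H^1_0(B_R)$ (the origin has zero capacity for $n\ge2$) fills in a point the paper only asserts.

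One small remark on the splitting step. It is your Ehrling/Gagliardo--Nirenberg inequality, not the splitting of $Q$, that makes the bound uniform. It already gives $\int_0^R|q|y^2w\,dr\le\|q\|_{p,w}\bigl(\varepsilon\|y'\|_{2,w}^2+C_\varepsilon\|y\|_{2,w}^2\bigr)$ with constants depending only on $\|q\|_{p,w}$, so the truncation is redundant. By contrast, the paper's truncation threshold $M$ depends on $q$ itself. Your version is therefore the one that actually delivers the uniformity over $L^p_w$-bounded sets that the paper later invokes in Lemma~\ref{lem:weak-continuity}.
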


\begin{proof}
Let $y\in H^1_{w,R}(0,R)$ and let $\tilde y(x)=y(|x|)$,
$\tilde q(x)=q(|x|)$.  Up to the harmless factor $|\mathbb S^{n-1}|$, the
weighted integrals on $(0,R)$ are the radial integrals on $B_R$.  By H\"older's
inequality,
\[
 \int_0^R |q|y^2w\dd r\le \norm{q}_{p,w}\norm{y}_{2{\pd},w}^2,
 \qquad \pd=\frac{p}{p-1}.
\]
Since $p>n/2$, the exponent $2\pd$ is subcritical for the Sobolev embedding of
$H^1_0(B_R)$ when $n\ge3$; for $n=2$ it is finite and hence admissible.
{Using the identification $\tilde y(x)=y(|x|)$ and the polar-coordinate
formula, this embedding restricts to the radial weighted estimate}
\[
        \norm{y}_{2\pd,w}^2\le C\norm{y}_{H^1_{w,R}}^2 .
\]

{We now prove the infinitesimal form bound. For $M>0$, set}
\[
        {q_M(r)=\min\{|q(r)|,M\},\qquad
        r_M(r)=|q(r)|-q_M(r).}
\]
{Then $0\le q_M\le M$ and $\norm{r_M}_{p,w}\to0$ as
$M\to\infty$. Hence, by the preceding H\"older--Sobolev estimate,}
\[
\begin{aligned}
 {\int_0^R |q|y^2w\dd r}
 &{\le
 \int_0^R q_My^2w\dd r+\int_0^R r_My^2w\dd r}  \\
 &{\le
 M\int_0^R y^2w\dd r+\norm{r_M}_{p,w}\norm{y}_{2p',w}^2} \\
 &{\le
 M\int_0^R y^2w\dd r
 +C\norm{r_M}_{p,w}
 \left(
 \int_0^R |y'|^2w\dd r+\int_0^R y^2w\dd r
 \right).}
\end{aligned}
\]
{Given $\varepsilon>0$, choose $M$ so large that
$C\norm{r_M}_{p,w}\le\varepsilon$. Then there exists a constant
$C_\varepsilon(q)>0$ such that}
\[
 \int_0^R |q|y^2w\dd r
 \le \varepsilon\int_0^R |y'|^2w\dd r+C_\varepsilon(q)\int_0^R y^2w\dd r.
\]
Therefore the potential term is infinitesimally form-bounded with respect to the
Dirichlet form.  The KLMN theorem yields a closed lower-bounded form and hence a
self-adjoint operator.  Finally, the compact embedding
$H^1_0(B_R)\hookrightarrow L^2(B_R)$ restricts to the compact embedding
$H^1_{w,R}(0,R)\hookrightarrow L^2_w(0,R)$, so the resolvent is compact.
\end{proof}

\begin{proposition}\label{prop:radial-oscillation}
Let $q\in\Lpwt$ be real-valued.  The radial eigenvalues may be ordered as
\[
        \lambda_1(q)<\lambda_2(q)<\cdots<\lambda_m(q)<\cdots,
        \qquad \lambda_m(q)\to+\infty.
\]
Each radial eigenvalue is simple.  If $E_m(\cdot;q)$ is a real eigenfunction for
$\lambda_m(q)$, then $E_m$ has exactly $m-1$ zeros in $(0,R)$, and all of them
are simple.
\end{proposition}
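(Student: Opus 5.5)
Lemma~\ref{lem:form-bounded} gives a self-adjoint operator with compact resolvent, bounded from below, in $\Ltwo$. Its spectrum is therefore a discrete sequence of eigenvalues of finite multiplicity accumulating only at $+\infty$. What remains is to prove simplicity and the oscillation count.

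\emph{Local regularity.} I will first establish the local regularity of eigenfunctions. On any compact $[\delta,R]\subset(0,R]$ the weight is bounded above and below, and $q\in L^p_w$ restricts to $q\in L^1(\delta,R)$, since $p>1$. Hence an eigenfunction $y\in H^1_{w,R}$ satisfies $wy'\in AC_{\mathrm{loc}}(0,R]$, with $(wy')'=w(q-\lambda)y$ almost everywhere. Carathéodory existence and uniqueness for the first-order system in $(y,wy')$ then applies on $(0,R]$. In particular, a nontrivial solution cannot have $y(t)=y'(t)=0$ at any $t\in(0,R]$. This gives simplicity of interior zeros immediately. It also yields at most one Dirichlet solution at $R$ up to scaling, because both vanish at $R$ and the Wronskian argument forces proportionality. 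So every eigenvalue is simple, and the strict ordering $\lambda_1<\lambda_2<\cdots$ follows.

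\emph{Behaviour at the origin.} This is the main obstacle. I must show that the $H^1_w$ condition selects the regular solution and excludes the singular one, and that $w y'\to0$ as $r\to0$. For the latter I will write
\[
w(r)y'(r)=w(r_0)y'(r_0)-\int_r^{r_0}w(q-\lambda)y\,ds.
\]
The integrand lies in $L^1(0,r_0)$ by Hölder, since $y\in L^{2p^\dagger}_w$ by the Sobolev step of Lemma~\ref{lem:form-bounded} and $q\in L^p_w$. So $wy'$ has a limit $\ell$ at $0$. If $\ell\neq0$, then $|y'|\sim|\ell| r^{1-n}$, which contradicts $\int|y'|^2r^{n-1}dr<\infty$ for $n\ge2$ (the case $n=2$ gives a logarithmic divergence). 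Thus $wy'(0^+)=0$. With this boundary behaviour, the Wronskian $W=w(y_1y_2'-y_1'y_2)$ of two eigenfunctions tends to $0$ at the origin, because $y$ is bounded near $0$ by a short Grönwall-type bootstrap on the integral equation. This also gives a second, cleaner proof of simplicity.

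\emph{Oscillation count.} I will argue by Sturm comparison together with a homotopy or variational count. Sturm comparison via the Wronskian identity $\bigl(w(y_1y_2'-y_1'y_2)\bigr)'=(\lambda_1-\lambda_2)wy_1y_2$ on a nodal interval $(a,b)$ shows that between consecutive zeros of $E_m$ (with $a=0$ allowed, using the vanishing boundary term above) $E_{m+1}$ has a zero. Hence the zero counts are strictly increasing in $m$. For the exact count I plan a Prüfer angle argument. Set $y=\rho\sin\theta$ and $wy'=\rho\cos\theta$ on $(0,R]$, so that $\theta'=\cos^2\theta/w+w(\lambda-q)\sin^2\theta$. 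The regular solution starts at $\theta(0^+)=\pi/2$, from the limit $wy'\to0$ with $y(0)\ne0$; the latter holds because $y(0)=0$ together with $wy'\to0$ would force $y\equiv0$ by the integral equation. Next, $\theta(R;\lambda)$ is continuous and strictly increasing in $\lambda$, by the standard comparison for the Prüfer equation, which is legitimate on $[\delta,R]$, together with uniform control near $0$. It tends to $\pi/2$ as $\lambda\to-\infty$, which I would justify via the lower bound of the form: a positive solution exists for $\lambda<\lambda_1$. It tends to $+\infty$ as $\lambda\to\infty$. Zeros occur exactly when $\theta\in\pi\mathbb Z$, and $\theta$ crosses these values only upward. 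Therefore the $m$-th Dirichlet eigenvalue is the one with $\theta(R)=m\pi$, giving exactly $m-1$ zeros in $(0,R)$.

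The delicate points are the continuity of $\theta(\cdot;\lambda)$ near the singular endpoint uniformly in $\lambda$ and the limit as $\lambda\to-\infty$. If these prove troublesome, I would fall back on the Courant-type min--max bound: at most $m-1$ zeros for $E_m$, since otherwise restrictions to nodal intervals would give $m$ orthogonal test functions with energy $\le\lambda_m$, contradicting simplicity. Combining this with the strictly increasing count from Sturm comparison and $E_1$ having no zeros yields exactly $m-1$ zeros.
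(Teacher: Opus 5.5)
Your proposal is correct. The simplicity part is the paper's own argument: two Dirichlet solutions have vanishing weighted Wronskian at the regular endpoint $R$, hence are proportional. For the oscillation count, however, you take a genuinely different route.

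\emph{The paper's route.} It invokes the Sturm oscillation theorem for singular problems with a regular radial endpoint, or alternatively a regular approximation on $[\delta,R]$ with $\delta\to0$. It never analyses the origin. Making that limit rigorous would need convergence of the approximating eigenpairs and control of zeros escaping to, or emerging from, $r=0$. The paper's argument is therefore short but rests essentially on a citation.

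\emph{Your route.} You derive $wy'(0^+)=0$ from finite radial energy and use it to kill the Wronskian boundary term at $0$. You then obtain the exact count from two bounds:
\begin{enumerate}[label=\textup{(\alph*)}]
\item a Sturm-comparison lower bound: each nodal interval of $E_m$, including $(0,T_{1,m})$, contains a zero of $E_{m+1}$;
\item a Courant/min--max upper bound of at most $m-1$ zeros, via the equality case of min--max: a nonzero combination of $m$ nodal truncations orthogonal to $E_1,\dots,E_{m-1}$ would be a $\lambda_m$-eigenfunction vanishing on a nodal interval of $E_m$.
\end{enumerate}
This gives a self-contained proof that treats the singular endpoint honestly, at the cost of length.

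\emph{Remarks on your details.}
\begin{enumerate}[label=\textup{(\roman*)}]
\item It is your fallback, not the Pr\"ufer plan, that closes the argument. The Pr\"ufer route still needs existence of a regular solution for every $\lambda$ and continuity of $\theta(R;\lambda)$ up to $r=0$.
\item In the Pr\"ufer plan, as $\lambda\to-\infty$ one actually has $\theta(R;\lambda)\to0^+$, not $\pi/2$. All that is needed is $\theta(R;\lambda)\in(0,\pi)$ for $\lambda<\lambda_1$.
\item The boundedness bootstrap near $0$ can be avoided. The radial estimate $|y(r)|\le C r^{(2-n)/2}$ (with $\sqrt{\log(R/r)}$ when $n=2$) and $|wy'(r)|\le\int_0^r w|q-\lambda||y|\,ds\le C r^{n/(2p^\dagger)}$ give $|y|\,|wy'|\le C r^{1-n/(2p)}\to0$ directly. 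This is exactly where $p>n/2$ enters.
\end{enumerate}
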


\begin{proof}
By Lemma \ref{lem:form-bounded}, the spectrum is real, discrete, and consists of
finite-multiplicity eigenvalues tending to $+\infty$.  The simplicity is the
one-dimensional simplicity of the regular radial initial value problem.  Indeed,
{the space of solutions of \eqref{eq:radial-sl-main} satisfying
the regular radial condition at the origin is one-dimensional for each fixed
spectral parameter.}
If two linearly independent regular solutions satisfied the Dirichlet condition
at $R$, their weighted Wronskian
\[
        W(r)=w(r)(y_1y_2'-y_1'y_2)
\]
would be constant on every compact subinterval of $(0,R)$.
{Since both solutions satisfy $y_1(R)=y_2(R)=0$, this constant
has limiting value zero at $r=R$. Hence $W\equiv0$, and the two solutions are
linearly dependent, a contradiction.}
The oscillation count follows from the Sturm oscillation theorem for singular
Sturm--Liouville problems with a regular radial endpoint.
{Equivalently, one may obtain it by a standard regular
approximation on $[\delta,R]$ and then let $\delta\rightarrow0$.}
Passing to the limit $\delta\rightarrow0$ preserves the zeros because local
solutions are continuous and a non-trivial solution cannot have a double
interior zero.  Thus the $m$-th radial eigenfunction has exactly $m-1$ simple
interior zeros.
\end{proof}

We normalize eigenfunctions by
\begin{equation}\label{eq:eigen-normalization}
        \int_0^R E_m(r;q)^2w(r)\dd r=1.
\end{equation}
The zeros are denoted by
\[
        0<T_{1,m}(q)<\cdots<T_{m-1,m}(q)<R.
\]

\begin{lemma}\label{lem:weak-continuity}
Let $q_k\rightharpoonup q$ weakly in $\Lpwt$ and assume that $\{q_k\}$ is bounded
in $\Lpwt$.  Then, for each fixed $m$, $\lambda_m(q_k)\to\lambda_m(q)$.  After a
consistent choice of signs, $E_m(\cdot;q_k)\to E_m(\cdot;q)$ strongly in
$L^2_w(0,R)$ and in $C^1_{\mathrm{loc}}((0,R))$.  Consequently,
\[
        T_{i,m}(q_k)\to T_{i,m}(q),\qquad i=1,\ldots,m-1.
\]
\end{lemma}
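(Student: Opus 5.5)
The proof has four stages: a uniform form bound, convergence of eigenvalues via compactness of the form domain, upgrading to $C^1_{\mathrm{loc}}$ convergence through the ODE, and convergence of the zeros by simplicity.

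\textbf{Uniform form bound.} Let $M=\sup_k\norm{q_k}_{p,w}<\infty$. The H\"older--Sobolev estimate from the proof of Lemma~\ref{lem:form-bounded} gives
\[
\int_0^R|q_k|y^2w\dd r\le C M\norm{y}_{2\pd,w}^2 .
\]
Since $p>n/2$, the exponent $2\pd$ is strictly subcritical. Interpolating between $L^2_w$ and the critical Sobolev exponent (any large finite exponent when $n=2$) then yields, for every $\varepsilon>0$,
\[
\int_0^R|q_k|y^2w\dd r\le\varepsilon\int_0^R|y'|^2w\dd r+C_\varepsilon\int_0^R y^2w\dd r ,
\]
with $C_\varepsilon$ depending only on $M$ and not on $k$. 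Note that the truncation argument of Lemma~\ref{lem:form-bounded} is not uniform along a merely bounded sequence, so the interpolation route is the one to use. Consequently the forms $a_{q_k}$ are uniformly bounded below and uniformly equivalent to the $H^1_{w,R}$ norm up to shifts.

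\textbf{Convergence of eigenvalues.} I will use the min--max characterization of $\lambda_m$. The key observation is that $y\mapsto y^2$ maps bounded sets of $H^1_{w,R}$ compactly into $L^{\pd}_w$. This holds because $H^1_{w,R}\hookrightarrow L^{2\pd}_w$ compactly: the exponent is subcritical, and compactness restricts from $H^1_0(B_R)$ to radial functions.
\begin{itemize}
\item \emph{Upper bound.} For a fixed $m$-dimensional trial space spanned by eigenfunctions of $q$, weak convergence gives $\int q_k y^2w\to\int q y^2w$ uniformly on its unit sphere, since the sphere is compact. Hence $\limsup_k\lambda_m(q_k)\le\lambda_m(q)$.
\item \emph{Lower bound.} By the upper bound, the normalized eigenfunctions $E_j(\cdot;q_k)$, $j\le m$, are bounded in $H^1_{w,R}$. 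Extract subsequences converging weakly in $H^1_{w,R}$ and strongly in $L^2_w$ and $L^{2\pd}_w$. Then $E_j(\cdot;q_k)^2\to E_j^2$ strongly in $L^{\pd}_w$, and pairing this with the weakly convergent $q_k$ gives $\int q_kE_{j,k}^2w\to\int qE_j^2w$. Passing to the limit in the weak eigen-equation, the limits are orthonormal eigenfunctions of $q$ with eigenvalues $\mu_j=\lim\lambda_j(q_k)$. Since the $\mu_j$, $j\le m$, are $m$ distinct orthonormal eigen-pairs, $\mu_m\ge\lambda_m(q)$.
\end{itemize}
Together these give $\lambda_m(q_k)\to\lambda_m(q)$. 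Simplicity (Proposition~\ref{prop:radial-oscillation}) identifies the limit eigenfunction as $\pm E_m(\cdot;q)$. After fixing signs, for instance by making $E_m'$ or $E_m$ positive near the origin, the whole sequence converges strongly in $L^2_w$.

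\textbf{$C^1_{\mathrm{loc}}$ convergence and the zeros.} On a compact $[a,b]\subset(0,R)$ the weight is bounded above and below, so $H^1_w$ bounds give uniform $C^{0,1/2}$ bounds, and $L^2_w$ convergence gives uniform convergence via the embedding on $[a,b]$. Writing the equation as
\[
(wE')'=w(q_k-\lambda_k)E ,
\]
and noting that $q_k$ is bounded in $L^1(a,b)$ because $p>1$, I integrate to obtain
\[
wE_k'(r)=wE_k'(a)+\int_a^r w(q_k-\lambda_k)E_k .
\]
The integral converges uniformly, since $q_k\rightharpoonup q$ weakly in $L^p(a,b)$, $E_k\to E$ uniformly, and the primitives of a weakly convergent $L^p$ sequence converge uniformly by equicontinuity plus pointwise convergence. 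The value $E_k'(a)$ is then controlled by averaging, so $E_k'\to E'$ uniformly on $[a,b]$.

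Finally, the zeros $T_{i,m}(q)$ are simple and lie in the open interval $(0,R)$. Uniform $C^1$ convergence near each of them, combined with the implicit-function or intermediate-value argument, shows that $E_m(\cdot;q_k)$ has a zero near each $T_{i,m}(q)$. To pin down the indices I still need to exclude extra zeros near $0$ or near $R$ for large $k$; granted that, the exact count $m-1$ forces $T_{i,m}(q_k)\to T_{i,m}(q)$.

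\textbf{Main obstacle.} I expect the endpoint control just mentioned to be the hardest step. The concern is zeros of $E_m(\cdot;q_k)$ accumulating at $R$ or at the singular endpoint $0$.
\begin{itemize}
\item \emph{Near $R$.} I would use $C^1$ convergence on $[a,R]$, which works because $R$ is a regular endpoint and the argument above extends there. Since $E_m'(R;q)\neq0$, this excludes zeros accumulating at $R$.
\item \emph{Near $0$.} I would first try a Sturm comparison on $(0,\delta)$: a zero of $E_m(\cdot;q_k)$ in $(0,\delta)$ would force the principal Dirichlet eigenvalue of $a_{q_k}$ on the ball $B_\delta$ to be $\le\lambda_m(q_k)$. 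By the uniform form bound, the first Dirichlet eigenvalue on $B_\delta$ is at least
\[
(1-\varepsilon)\,c\,\delta^{-2}-C_\varepsilon ,
\]
which tends to $\infty$ as $\delta\to0$. This rules out zeros in $(0,\delta)$ for small $\delta$ uniformly in $k$.
\end{itemize}
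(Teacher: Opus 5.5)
Your proposal is correct and follows the paper's proof in all essentials: a uniform form bound, the min--max upper bound on the fixed trial space spanned by $E_1(\cdot;q),\dots,E_m(\cdot;q)$, compactness and passage to the limit in the weak eigen-equation to get $m$ orthonormal limit eigenfunctions for the lower bound, simplicity, local ODE regularity for $C^1_{\mathrm{loc}}$, and simplicity of the zeros; moreover, your interpolation with the strictly subcritical exponent $2p^\dagger$ supplies the $k$-uniform infinitesimal form bound that the paper only asserts, since the truncation argument of Lemma~\ref{lem:form-bounded} is indeed not uniform along a merely bounded sequence. The endpoint analysis you flag as the main obstacle is unnecessary: once $E_m(\cdot;q_k)$ has a zero in each of $m-1$ disjoint neighbourhoods of the $T_{i,m}(q)$, the exact count of $m-1$ interior zeros (Proposition~\ref{prop:radial-oscillation} applied to $q_k$) already excludes further zeros near $0$ or $R$ and fixes the ordering, which is exactly how the paper concludes, so your Sturm-comparison and boundary arguments are valid but can be dropped.
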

\begin{proof}
It is enough to prove that every subsequence contains a further subsequence
converging to the asserted limit.

By the min--max characterization and the uniform form bound in Lemma
\ref{lem:form-bounded}, the sequence $\{\lambda_m(q_k)\}$ is bounded in
$\mathbb R$. Hence, after passing to a subsequence if necessary, we may assume
that
\begin{equation*}
        \lambda_m(q_k)\to \mu
\end{equation*}
for some $\mu\in\mathbb R$.

Let $E_k=E_m(\cdot;q_k)$ be normalized as in
\eqref{eq:eigen-normalization}. The eigenvalue equation and the same uniform
form bound imply that $\{E_k\}$ is bounded in $H^1_{w,R}(0,R)$. Indeed, since
$\{q_k\}$ is bounded in $L^p_w(0,R)$, the infinitesimal form bound is uniform
on this sequence: for each $\varepsilon\in(0,1)$ there exists
$C_\varepsilon>0$, independent of $k$, such that
\begin{equation*}
        \int_0^R |q_k|E_k^2w\,\dd r
        \le
        \varepsilon\int_0^R |E_k'|^2w\,\dd r
        +
        C_\varepsilon\int_0^R E_k^2w\,\dd r .
\end{equation*}
Since $\|E_k\|_{2,w}=1$ and $\{\lambda_m(q_k)\}$ is bounded, this gives a
uniform bound for
\begin{equation*}
        \int_0^R |E_k'|^2w\,\dd r .
\end{equation*}
Thus, up to a subsequence,
\begin{equation*}
        E_k\rightharpoonup E
        \quad \text{weakly in } H^1_{w,R}(0,R),
        \qquad
        E_k\to E
        \quad \text{strongly in } L^2_w(0,R).
\end{equation*}
In particular, $\|E\|_{2,w}=1$, and hence $E\not\equiv0$.

For every test function $\varphi\in H^1_{w,R}(0,R)$, the compact Sobolev
embedding used in Lemma \ref{lem:form-bounded} gives
\begin{equation*}
        E_k\to E
        \quad \text{strongly in } L^{2p^\dagger}_w(0,R).
\end{equation*}
Since
\begin{equation*}
        \varphi\in H^1_{w,R}(0,R)
        \hookrightarrow L^{2p^\dagger}_w(0,R),
\end{equation*}
H\"older's inequality yields
\begin{equation*}
\begin{aligned}
        \|E_k\varphi-E\varphi\|_{p^\dagger,w}
        &=
        \|(E_k-E)\varphi\|_{p^\dagger,w}  \\
        &\le
        \|E_k-E\|_{2p^\dagger,w}
        \|\varphi\|_{2p^\dagger,w}
        \to 0 .
\end{aligned}
\end{equation*}
Therefore
\begin{equation*}
        E_k\varphi\to E\varphi
        \quad \text{strongly in } L^{p^\dagger}_w(0,R).
\end{equation*}

Since $q_k\rightharpoonup q$ weakly in $L^p_w(0,R)$, we may pass to the limit
in the weak formulation
\begin{equation*}
        \int_0^R E_k'\varphi' w\,\dd r
        +
        \int_0^R q_kE_k\varphi w\,\dd r
        =
        \lambda_m(q_k)\int_0^R E_k\varphi w\,\dd r .
\end{equation*}
Indeed,
\begin{equation*}
\begin{aligned}
&\int_0^R q_kE_k\varphi w\,\dd r
-
\int_0^R qE\varphi w\,\dd r        \\
&\quad =
\int_0^R q_k(E_k\varphi-E\varphi)w\,\dd r
+
\int_0^R (q_k-q)E\varphi w\,\dd r
\to0,
\end{aligned}
\end{equation*}
because $\{q_k\}$ is bounded in $L^p_w(0,R)$ and
$E_k\varphi\to E\varphi$ strongly in $L^{p^\dagger}_w(0,R)$. Hence
\begin{equation*}
        \int_0^R E'\varphi' w\,\dd r
        +
        \int_0^R qE\varphi w\,\dd r
        =
        \mu\int_0^R E\varphi w\,\dd r .
\end{equation*}
Thus $E$ is a nontrivial eigenfunction of the limit operator.

We now identify the eigenvalue index. First, by testing the min--max formula
on the fixed $m$-dimensional space
\begin{equation*}
        V_m(q)
        =
        \operatorname{span}\{E_1(\cdot;q),\ldots,E_m(\cdot;q)\},
\end{equation*}
and using the weak convergence of $q_k$ on this finite-dimensional space, one
gets
\begin{equation*}
        \limsup_{k\to\infty}\lambda_m(q_k)\le \lambda_m(q).
\end{equation*}
Conversely, suppose along a subsequence that
\begin{equation*}
        \lambda_m(q_k)\to \mu<\lambda_m(q).
\end{equation*}
For each $\ell=1,\ldots,m$, choose normalized eigenfunctions
\begin{equation*}
        E_{\ell,k}=E_\ell(\cdot;q_k).
\end{equation*}
Since
\begin{equation*}
        \lambda_\ell(q_k)\le \lambda_m(q_k),
\end{equation*}
the same compactness argument applied to $\{E_{\ell,k}\}$ gives, after a
diagonal subsequence, nonzero mutually orthogonal limits
$E_1,\ldots,E_m$, each of which is an eigenfunction of the limit operator with
eigenvalue not larger than $\mu$. This gives at least $m$ linearly independent
eigenfunctions of the limit operator with eigenvalues strictly below
$\lambda_m(q)$, contradicting the definition of the $m$-th eigenvalue. Hence
\begin{equation*}
        \liminf_{k\to\infty}\lambda_m(q_k)\ge \lambda_m(q).
\end{equation*}
Therefore
\begin{equation*}
        \mu=\lambda_m(q).
\end{equation*}
Since $\lambda_m(q)$ is simple by Proposition \ref{prop:radial-oscillation},
the limit eigenfunction must be $\pm E_m(\cdot;q)$. After choosing the signs
consistently, the limit is $E_m(\cdot;q)$. Since every subsequence admits a
further subsequence with this same limit, the whole sequence satisfies
\begin{equation*}
        \lambda_m(q_k)\to\lambda_m(q),
        \qquad
        E_m(\cdot;q_k)\to E_m(\cdot;q)
        \quad \text{strongly in } L^2_w(0,R).
\end{equation*}

It remains to prove the local $C^1$ convergence. Let
\begin{equation*}
        K\Subset K_1\Subset(0,R).
\end{equation*}
On $K_1$, the weight $w(r)=r^{n-1}$ is bounded above and below away from zero.
The equation may be written in divergence form as
\begin{equation*}
        -(wE_k')'=w(\lambda_m(q_k)-q_k)E_k .
\end{equation*}
The already obtained $H^1$ bound, together with the one-dimensional embedding
\begin{equation*}
        H^1(K_1)\hookrightarrow L^\infty(K_1),
\end{equation*}
implies that $\{E_k\}$ is bounded in $L^\infty(K_1)$. Since $\{q_k\}$ is
bounded in $L^p(K_1)$ and $\{\lambda_m(q_k)\}$ is bounded, the right-hand side
\begin{equation*}
        w(\lambda_m(q_k)-q_k)E_k
\end{equation*}
is bounded in $L^p(K_1)$. By one-dimensional local elliptic regularity,
$\{E_k\}$ is bounded in $W^{2,p}(K)$. Since $p>1$, the embedding
\begin{equation*}
        W^{2,p}(K)\hookrightarrow C^1(K)
\end{equation*}
is compact. The local limit has already been identified as $E_m(\cdot;q)$.
Hence
\begin{equation*}
        E_m(\cdot;q_k)\to E_m(\cdot;q)
        \quad \text{in } C^1(K).
\end{equation*}
Since $K\Subset(0,R)$ is arbitrary, we obtain
\begin{equation*}
        E_m(\cdot;q_k)\to E_m(\cdot;q)
        \quad \text{in } C^1_{\mathrm{loc}}((0,R)).
\end{equation*}

Finally, the zeros of $E_m(\cdot;q)$ are simple. If $T_{i,m}(q)$ is a zero of
$E_m(\cdot;q)$, then
\begin{equation*}
        E_m'(T_{i,m}(q);q)\neq0.
\end{equation*}
Therefore, for all sufficiently large $k$, there exists a unique zero of
$E_m(\cdot;q_k)$ near $T_{i,m}(q)$, and this zero converges to $T_{i,m}(q)$.
Since the $m$-th radial eigenfunction has exactly $m-1$ simple zeros, these
zeros are precisely $T_{i,m}(q_k)$, $i=1,\ldots,m-1$. Consequently,
\begin{equation*}
        T_{i,m}(q_k)\to T_{i,m}(q),
        \qquad i=1,\ldots,m-1.
\end{equation*}
This completes the proof.
\end{proof}
\begin{proposition}\label{prop:nodal-C1}
For each admissible pair $(i,m)$, the map
\[
        q\mapsto T_{i,m}(q)
\]
is locally $C^1$ from $\Lpwt$ to $(0,R)$.
\end{proposition}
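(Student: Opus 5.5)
We prove that $T_{i,m}$ is locally $C^1$ by composing two maps. The first is $q\mapsto(\lambda_m(q),E_m(\cdot;q))$, which we show is $C^1$ with values in $\mathbb R\times C^1_{\mathrm{loc}}((0,R))$. The second is the root map, handled by the implicit function theorem at the simple zero $T_{i,m}(q)$.

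\emph{Step 1: the eigenpair is $C^1$.} Fix $q\in\Lpwt$ and consider
\[
F(\tilde q,\lambda,E)=\Bigl(A_{\tilde q}E-\lambda E,\ \tfrac12\bigl(\|E\|_{2,w}^2-1\bigr)\Bigr),
\]
where $A_{\tilde q}:H^1_{w,R}(0,R)\to H^{-1}_w$ is the operator associated with the form $a_{\tilde q}$.

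We first check that $F$ is $C^1$. The map $\tilde q\mapsto A_{\tilde q}$ is affine. By the H\"older--Sobolev estimate in Lemma \ref{lem:form-bounded}, it is bounded as a map $\Lpwt\to\mathcal L(H^1_{w,R},H^{-1}_w)$, since
\[
\Bigl|\int\tilde q E\varphi w\Bigr|\le\|\tilde q\|_{p,w}\,\|E\|_{2\pd,w}\,\|\varphi\|_{2\pd,w}.
\]
Hence $F$ is $C^1$, in fact polynomial.

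Next we check invertibility of the partial derivative. At $(q,\lambda_m,E_m)$ it is
\[
(\mu,\phi)\mapsto\bigl((A_q-\lambda_m)\phi-\mu E_m,\ \ip{E_m}{\phi}_w\bigr).
\]
This is an isomorphism because $\lambda_m$ is simple (Proposition \ref{prop:radial-oscillation}) and $A_q$ has compact resolvent (Lemma \ref{lem:form-bounded}). Consequently $A_q-\lambda_m$ is Fredholm of index zero, with kernel and cokernel both spanned by $E_m$, and the bordered operator is bijective.

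The implicit function theorem then gives a $C^1$ branch $\tilde q\mapsto(\lambda(\tilde q),E(\tilde q))$ into $\mathbb R\times H^1_{w,R}$. By continuity of eigenvalues (Lemma \ref{lem:weak-continuity}), together with the sign normalization, this branch coincides with $(\lambda_m(\tilde q),E_m(\cdot;\tilde q))$.

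\emph{Step 2: upgrade to $C^1_{\mathrm{loc}}$.} On a compact set $K_1\Subset(0,R)$ the weight is bounded above and below. The equation $-(wE')'=w(\lambda-\tilde q)E$ then expresses $wE'$ as an integral of a right-hand side that depends in a $C^1$ way on $\tilde q$ with values in $L^1(K_1)$. Integrating the equation once, as in the last part of the proof of Lemma \ref{lem:weak-continuity}, shows that $\tilde q\mapsto E(\tilde q)|_{K_1}\in C^1(K_1)$ is $C^1$. This step, obtaining Fr\'echet differentiability in the $C^1$ topology rather than only bounds, is the main technical point.

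Concretely, $E(r)=E(r_0)+\int_{r_0}^r w^{-1}(wE')$, with
\[
wE'(r)=wE'(r_0)-\int_{r_0}^r w(\lambda-\tilde q)E .
\]
The point values $E(r_0)$ and $wE'(r_0)$ are $C^1$ functionals of $\tilde q$: since $E\in W^{1,1}(K_1)$ depends $C^1$ on $\tilde q$ through $H^1$, they can be obtained by averaging. The integral term is $C^1$ because $(\tilde q,E)\mapsto\tilde qE$ is bilinear and bounded into $L^1(K_1)$.

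\emph{Step 3: the zero.} Set $G(\tilde q,t)=E_m(t;\tilde q)$ for $t$ near $T=T_{i,m}(q)$. By Step 2, $G$ is $C^1$ in $(\tilde q,t)$, and
\[
\partial_tG(q,T)=E_m'(T;q)\neq0,
\]
since the zeros are simple by Proposition \ref{prop:radial-oscillation}. The implicit function theorem therefore yields a $C^1$ function $t(\tilde q)$ with $G(\tilde q,t(\tilde q))=0$.

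By Lemma \ref{lem:weak-continuity}, the $i$-th zero $T_{i,m}(\tilde q)$ is close to $T$ for $\tilde q$ near $q$. The zero near $T$ is unique, so $t(\tilde q)=T_{i,m}(\tilde q)$ and $T_{i,m}$ is $C^1$ near $q$. Its derivative is
\[
T_{i,m}'(q)h=-\frac{\partial_{\tilde q}E_m(T;q)[h]}{E_m'(T;q)}.
\]
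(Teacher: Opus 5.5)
Your proof is correct and follows essentially the same route as the paper: $C^1$ dependence of the simple eigenpair in the energy space, then a local-regularity upgrade to $C^1$ on a compact subinterval away from the origin, then the implicit function theorem at the simple zero where $E_m'(T;q)\neq0$. The differences are only in detail: you prove the perturbation step with the bordered-operator implicit function theorem instead of citing Kato, you integrate the ODE (using the averaging trick for $wE'(r_0)$) instead of invoking $W^{2,p}$ regularity, and you identify the implicit branch with $T_{i,m}$ more explicitly via Lemma~\ref{lem:weak-continuity}.
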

\begin{proof}
Fix $q_*\in L^p_w(0,R)$ and put
\begin{equation*}
        E_*:=E_m(\cdot;q_*),
        \qquad
        T_*:=T_{i,m}(q_*).
\end{equation*}
Choose an open interval $I\Subset(0,R)$ such that $T_*\in I$.

By Proposition \ref{prop:radial-oscillation}, the eigenvalue
$\lambda_m(q_*)$ is simple. Moreover, by Lemma \ref{lem:form-bounded}, for
each $h\in L^p_w(0,R)$ the bilinear form
\begin{equation*}
        (y,z)\mapsto \int_0^R h(r)y(r)z(r)w(r)\,\dd r
\end{equation*}
is infinitesimally form-bounded with respect to the Dirichlet form. Hence
$q\mapsto a_q$ is a locally $C^1$ family of closed lower-bounded forms in the
form topology. The standard perturbation theorem for simple eigenvalues of
closed forms therefore implies that, after imposing the normalization
\eqref{eq:eigen-normalization} and fixing the sign by
\begin{equation*}
        \langle E_m(\cdot;q),E_*\rangle_w>0,
\end{equation*}
the maps
\begin{equation*}
        q\mapsto \lambda_m(q)\in\mathbb R,
        \qquad
        q\mapsto E_m(\cdot;q)\in H^1_{w,R}(0,R)
\end{equation*}
are locally $C^1$ in a neighborhood of $q_*$. See
\cite[Chapter~VII]{Kato1995}.

We next upgrade the dependence of the eigenfunction to local $C^1$ regularity
in the spatial variable. On $I$, the weight $w(r)=r^{n-1}$ is bounded above
and below away from zero. The eigenvalue equation can be written locally as
\begin{equation*}
        -(wE_m')'
        =
        w(\lambda_m(q)-q)E_m .
\end{equation*}
Using the already obtained $C^1$ dependence of
$q\mapsto \lambda_m(q)$ and $q\mapsto E_m(\cdot;q)$ in the energy topology,
together with the local one-dimensional elliptic regularity, we get
\begin{equation*}
        q\mapsto E_m(\cdot;q)
\end{equation*}
locally $C^1$ from $L^p_w(0,R)$ into $W^{2,p}(I)$. Since $p>1$ and
\begin{equation*}
        W^{2,p}(I)\hookrightarrow C^1(\overline I),
\end{equation*}
it follows that
\begin{equation*}
        q\mapsto E_m(\cdot;q)
\end{equation*}
is locally $C^1$ from $L^p_w(0,R)$ into $C^1(\overline I)$.

Now define
\begin{equation*}
        F(q,T):=E_m(T;q),
        \qquad
        (q,T)\in U\times I,
\end{equation*}
where $U$ is a sufficiently small neighborhood of $q_*$ in $L^p_w(0,R)$.
By the preceding paragraph,
\begin{equation*}
        F:U\times I\to\mathbb R
\end{equation*}
is a $C^1$ map. Moreover,
\begin{equation*}
        F(q_*,T_*)=E_*(T_*)=0.
\end{equation*}
Since the zero $T_*$ is simple, Proposition \ref{prop:radial-oscillation}
gives
\begin{equation*}
        \partial_TF(q_*,T_*)
        =
        E_*'(T_*)\neq0.
\end{equation*}
Therefore the Banach-space implicit function theorem yields a neighborhood
$U_0\subset U$ of $q_*$ and a unique $C^1$ map
\begin{equation*}
        T:U_0\to I
\end{equation*}
such that
\begin{equation*}
        T(q_*)=T_*,
        \qquad
        E_m(T(q);q)=0,
        \qquad q\in U_0.
\end{equation*}
Since the interior zeros of the $m$-th radial eigenfunction are simple and
ordered, this locally defined zero is precisely the continuation of the
$i$-th nodal point. Hence
\begin{equation*}
        T(q)=T_{i,m}(q)
\end{equation*}
for all $q$ sufficiently close to $q_*$. Consequently,
\begin{equation*}
        q\mapsto T_{i,m}(q)
\end{equation*}
is locally $C^1$ at $q_*$. Since $q_*$ was arbitrary, the proof is complete.

\end{proof}
\begin{theorem}\label{thm:spectral}
Under $p>\frac{n}{2}$, the radial problem \eqref{eq:radial-sl-main}
defines a self-adjoint operator in $\Ltwo$ with compact resolvent; its radial
eigenvalues are simple, its radial eigenfunctions have the nodal structure
described in Proposition \ref{prop:radial-oscillation}, and the nodal maps are
locally $C^1$ and weakly sequentially continuous on bounded subsets of $\Lpwt$.
\end{theorem}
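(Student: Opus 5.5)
This theorem collects results already proved in this section, so the plan is to assemble them in order rather than introduce new arguments.

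First, Lemma \ref{lem:form-bounded} supplies the operator. Under $p>n/2$, the potential term is infinitesimally form-bounded with respect to the weighted Dirichlet form on $H^1_{w,R}(0,R)$. The KLMN theorem then makes $a_q$ a closed, lower-bounded form. Its associated operator in $\Ltwo$ is self-adjoint, and the compact embedding $H^1_{w,R}(0,R)\hookrightarrow\Ltwo$ gives it compact resolvent. I would also check that this operator really realizes \eqref{eq:radial-sl-main} with the radial regularity condition at $0$ and the Dirichlet condition at $R$. The weak formulation with test functions in $H^1_{w,R}$ yields $-(wy')'+wqy=\lambda wy$ in the distributional sense on $(0,R)$. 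The form domain encodes $y(R)=0$ and finite radial energy near $0$, which is the regularity condition inherited from $H^1_0(B_R)$.

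Second, Proposition \ref{prop:radial-oscillation} gives the spectral structure. Discreteness comes from the compact resolvent. Simplicity follows because, for each spectral parameter, the regular solutions at the origin form a one-dimensional space, together with the weighted Wronskian argument at $r=R$. The oscillation count of exactly $m-1$ simple interior zeros comes from the singular Sturm oscillation theorem, or equivalently from regular approximation on $[\delta,R]$.

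Third, the regularity of the nodal maps. Proposition \ref{prop:nodal-C1} gives local $C^1$ dependence via simple-eigenvalue perturbation, local $W^{2,p}$ regularity, and the implicit function theorem applied to $F(q,T)=E_m(T;q)$ at a simple zero. Lemma \ref{lem:weak-continuity} gives weak sequential continuity on bounded sets. To cover the statement as written, I would note that if $q_k\rightharpoonup q$ in $\Lpwt$, the sequence is automatically bounded by the uniform boundedness principle. Restricting to bounded subsets therefore adds no hypothesis, and the lemma applies directly.

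The only step that needs care is the identification of the form operator with the singular boundary problem at $r=0$, in particular that no extra boundary condition is lost. I would settle it by appealing to the exact correspondence with radial functions in $H^1_0(B_R)$. For $n\ge2$ the origin is a point of zero capacity, so the form domain already selects the regular (Friedrichs-type) behavior and no additional condition at $r=0$ is required. With that in place, the theorem follows by combining the cited results.
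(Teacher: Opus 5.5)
Your proposal is correct and is essentially the paper's proof, which simply combines Lemma~\ref{lem:form-bounded}, Proposition~\ref{prop:radial-oscillation}, Lemma~\ref{lem:weak-continuity}, and Proposition~\ref{prop:nodal-C1}. Your two extra remarks are valid clarifications that the paper leaves implicit: weakly convergent sequences are automatically bounded, and the form domain $H^1_{w,R}(0,R)$ already selects the regular (Friedrichs) behaviour at $r=0$.
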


\begin{proof}
This is the combination of Lemma \ref{lem:form-bounded}, Proposition
\ref{prop:radial-oscillation}, Lemma \ref{lem:weak-continuity}, and Proposition
\ref{prop:nodal-C1}.
\end{proof}

We next compute the derivative of a nodal map.  Fix $q\in\Lpwt$, $m\ge2$, and
write
\[
        E(r)=E_m(r;q),\qquad \lambda=\lambda_m(q),\qquad T_j=T_{j,m}(q).
\]
Set
\[
        A_j(q)=\int_0^{T_j}E(r)^2w(r)\dd r,
        \qquad
        B_j(q)=\int_{T_j}^R E(r)^2w(r)\dd r.
\]
Then $A_j+B_j=1$.  Define
\begin{equation}\label{eq:Hj-def}
H_j(r;q)=
\begin{cases}
\displaystyle \frac{B_j(q)}{w(T_j)E'(T_j)^2},&0<r<T_j,\\[1.2ex]
\displaystyle -\frac{A_j(q)}{w(T_j)E'(T_j)^2},&T_j<r<R.
\end{cases}
\end{equation}
{The value of $H_j$ at $r=T_j$ is immaterial and may be assigned
arbitrarily, since $T_j$ is a single point and all subsequent formulas use
$H_j$ only under weighted integration or through its one-sided limits.}

\begin{lemma}
\label{lem:eigenvalue-derivative}
Let $q\in L^p_w(0,R)$ be real-valued and let $h\in L^p_w(0,R)$.  Set
\[
        q_s=q+sh,
        \qquad
        \lambda_s=\lambda_m(q_s),
        \qquad
        E_s=E_m(\cdot;q_s),
\]
where $E_s$ is normalized by
\[
        \int_0^R E_s(r)^2w(r)\,\dd r=1
\]
and chosen with a continuous sign.  Then
\begin{equation}\label{eq:eigenvalue-derivative}
        \left.\frac{\dd}{\dd s}\lambda_m(q+sh)\right|_{s=0}
        =
        \int_0^R h(r)E_m(r;q)^2w(r)\,\dd r .
\end{equation}
\end{lemma}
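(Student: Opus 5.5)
We prove the Hellmann--Feynman formula \eqref{eq:eigenvalue-derivative} by the usual cross-testing of the weak eigenvalue equations, using the regularity from Proposition \ref{prop:nodal-C1} to pass to the limit. By that proof, for $|s|$ small the maps $s\mapsto\lambda_s$ and $s\mapsto E_s\in H^1_{w,R}(0,R)$ are $C^1$, with the normalization \eqref{eq:eigen-normalization} and a continuous sign. In particular $E_s\to E_0=E_m(\cdot;q)$ in $H^1_{w,R}(0,R)$, so also in $L^{2\pd}_w(0,R)$ by the radial Sobolev estimate in Lemma \ref{lem:form-bounded}. Lemma \ref{lem:weak-continuity} gives the same conclusion, since $q_s\to q$ strongly and hence weakly in $\Lpwt$ with a uniform bound.

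Next we write the weak forms. For $E_s$ tested against $E_0$:
\[
\int_0^R E_s'E_0'w\dd r+\int_0^R q_sE_sE_0w\dd r=\lambda_s\int_0^R E_sE_0w\dd r.
\]
For $E_0$ tested against $E_s$:
\[
\int_0^R E_0'E_s'w\dd r+\int_0^R qE_0E_sw\dd r=\lambda_0\int_0^R E_0E_sw\dd r.
\]
Both tests are legitimate because each eigenfunction lies in the form domain $H^1_{w,R}(0,R)$. Subtracting, the symmetric Dirichlet terms cancel, and since $q_s-q=sh$,
\[
s\int_0^R hE_sE_0w\dd r=(\lambda_s-\lambda_0)\int_0^R E_sE_0w\dd r .
\]

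We then divide by $s\neq0$ and let $s\to0$. On the right, $\int_0^R E_sE_0w\dd r\to\|E_0\|_{2,w}^2=1$, so it is nonzero for small $s$. On the left, Hölder's inequality with exponents $p$, $2\pd$, $2\pd$ gives
\[
\Big|\int_0^R h(E_s-E_0)E_0w\dd r\Big|\le\norm{h}_{p,w}\norm{E_s-E_0}_{2\pd,w}\norm{E_0}_{2\pd,w}\to0 .
\]
Hence $(\lambda_s-\lambda_0)/s\to\int_0^R hE_0^2w\dd r$, which is \eqref{eq:eigenvalue-derivative}. This argument needs only continuity of $s\mapsto E_s$ in $L^{2\pd}_w$, not its differentiability.

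The only delicate point is this convergence of $E_s$ in $L^{2\pd}_w(0,R)$, because $h$ is merely in $\Lpwt$ and the weight degenerates at $r=0$. It follows from $H^1_{w,R}$ convergence via the Sobolev estimate of Lemma \ref{lem:form-bounded}; this is exactly where the hypothesis $p>n/2$ enters. The sign convention does not affect the result, since $E_0^2$ is sign-invariant.
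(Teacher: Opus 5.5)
Your proof is correct, but it takes a different route from the paper's.

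\textbf{The paper's route.} It takes the differentiability of $s\mapsto(\lambda_s,E_s)\in\mathbb R\times H^1_{w,R}(0,R)$ from Kato's perturbation theorem for simple eigenvalues of closed forms. It then differentiates the normalization to get $\int_0^R Ezw\,\dd r=0$, and differentiates the Rayleigh identity $\lambda_s=\int_0^R|E_s'|^2w\,\dd r+\int_0^R q_sE_s^2w\,\dd r$. Finally it cancels the two terms containing $z=\frac{\dd}{\dd s}E_s|_{s=0}$ by testing the equation for $E$ against $z$.

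\textbf{Your route.} You cross-test the two weak eigenvalue equations and obtain the exact identity $(\lambda_s-\lambda_0)\langle E_s,E_0\rangle_w=s\int_0^R hE_sE_0w\,\dd r$. From this you need only continuity of $E_s$ in $L^{2\pd}_w(0,R)$, up to sign. That is available without any appeal to Kato. The proof of Lemma~\ref{lem:weak-continuity} gives it: the uniform $H^1_{w,R}$ bound, the compact embedding into $L^{2\pd}_w$, and uniqueness of the limit identified there. The lemma's statement alone records only $L^2_w$ and $C^1_{\mathrm{loc}}$ convergence, so you should cite its proof, or add the compactness step yourself.

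\textbf{What each buys.} Your version proves that $\lambda'(0)$ exists rather than presupposing it. It also makes the independence from the sign convention transparent, since the quotient $\int hE_sE_0w/\int E_sE_0w$ is invariant under $E_s\mapsto-E_s$. The paper's version produces the derivative $z$ as a by-product. That $z$, together with $\lambda'$, is exactly what the nodal derivative formula in Theorem~\ref{thm:derivative} requires. So in the paper's overall economy the differentiability of $E_s$ has to be established anyway, and the Rayleigh-quotient computation costs nothing extra there.
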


\begin{proof}
Put
\[
        E=E_m(\cdot;q),
        \qquad
        \lambda=\lambda_m(q),
        \qquad
        z=\left.\frac{\dd}{\dd s}E_s\right|_{s=0},
        \qquad
        \lambda'=\left.\frac{\dd}{\dd s}\lambda_s\right|_{s=0}.
\]
By Lemma \ref{lem:form-bounded} and the standard perturbation theorem for
simple eigenvalues of closed forms, the above derivatives are well defined.
Differentiating the normalization at $s=0$ gives
\begin{equation}\label{eq:normalization-derivative}
        \int_0^R E(r)z(r)w(r)\,\dd r=0 .
\end{equation}

Since $E_s$ is $L^2_w$-normalized, the Rayleigh identity gives
\[
        \lambda_s
        =
        \int_0^R |E_s'(r)|^2w(r)\,\dd r
        +
        \int_0^R q_s(r)E_s(r)^2w(r)\,\dd r .
\]
Differentiating this identity at $s=0$ yields
\begin{align}
        \lambda'
        &=
        2\int_0^R E'(r)z'(r)w(r)\,\dd r
        +
        \int_0^R h(r)E(r)^2w(r)\,\dd r        \notag\\
        &\quad
        +
        2\int_0^R q(r)E(r)z(r)w(r)\,\dd r .
        \label{eq:rayleigh-differentiated}
\end{align}
On the other hand, testing
\[
        -w^{-1}(wE')'+qE=\lambda E
\]
by $z$ gives
\begin{equation}\label{eq:test-E-by-z}
        \int_0^R E'(r)z'(r)w(r)\,\dd r
        +
        \int_0^R q(r)E(r)z(r)w(r)\,\dd r
        =
        \lambda\int_0^R E(r)z(r)w(r)\,\dd r .
\end{equation}
By \eqref{eq:normalization-derivative}, the right-hand side of
\eqref{eq:test-E-by-z} is zero. Hence the two terms containing $z$ in
\eqref{eq:rayleigh-differentiated} cancel, and we obtain
\[
        \lambda'
        =
        \int_0^R h(r)E(r)^2w(r)\,\dd r .
\]
This proves \eqref{eq:eigenvalue-derivative}.
\end{proof}

\begin{theorem}
\label{thm:derivative}
For every $h\in\Lpwt$,
\begin{equation}\label{eq:nodal-derivative}
        D_qT_{j,m}(q)[h]
        =
        \int_0^R H_j(r;q)E_m(r;q)^2h(r)w(r)\,\dd r .
\end{equation}
\end{theorem}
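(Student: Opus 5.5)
We differentiate the identity $E_s(T_{j,m}(q_s))=0$ along $q_s=q+sh$. By Proposition \ref{prop:nodal-C1} the map $s\mapsto T_{j,m}(q_s)$ is $C^1$, and $s\mapsto E_s$ is $C^1$ into $C^1(\overline I)$ for a compact interval $I\ni T_j$. The chain rule therefore gives
\[
        E'(T_j)\,D_qT_{j,m}(q)[h]+z(T_j)=0,
        \qquad z=\left.\tfrac{\dd}{\dd s}E_s\right|_{s=0}.
\]
Since $E'(T_j)\neq0$ by Proposition \ref{prop:radial-oscillation}, the statement reduces to the identity
\[
        z(T_j)=-E'(T_j)\int_0^R H_jE^2hw\dd r .
\]

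To compute $z(T_j)$, differentiate the weak eigenvalue equation, using Lemma \ref{lem:eigenvalue-derivative} for $\lambda'=\int_0^R hE^2w\dd r$. In weak form, $z$ solves
\[
        -(wz')'+w(q-\lambda)z=w(\lambda'-h)E .
\]
Multiply by $E$ and integrate over $(0,T_j)$. We use the weighted Green identity
\[
        \int_0^{T}\big[-(wz')'E+(wE')'z\big]\dd r
        =\big[w(Ez'-E'z)\big]_0^{T}.
\]
At $r=T_j$ we have $E(T_j)=0$, so the boundary term there is $-w(T_j)E'(T_j)z(T_j)$. At $r=0$ the boundary term vanishes. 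This gives
\[
        -w(T_j)E'(T_j)z(T_j)=\int_0^{T_j}(\lambda'-h)E^2w\dd r
        =\lambda' A_j-\int_0^{T_j}hE^2w\dd r .
\]
Now substitute $\lambda'=\int_0^R hE^2w\dd r$ and $A_j+B_j=1$:
\[
        \lambda'A_j-\int_0^{T_j}hE^2w\dd r
        =-B_j\int_0^{T_j}hE^2w\dd r+A_j\int_{T_j}^RhE^2w\dd r .
\]
Dividing by $-w(T_j)E'(T_j)$ gives
\[
        z(T_j)=-E'(T_j)\int_0^R H_jE^2hw\dd r,
\]
by the definition \eqref{eq:Hj-def}. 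Inserting this into the chain-rule identity yields \eqref{eq:nodal-derivative}. One could equally integrate over $(T_j,R)$ using $E(R)=z(R)=0$; this gives the same formula and serves as a consistency check.

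The main obstacle is justifying the integration by parts on $(0,T_j)$. Two points need care.

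\emph{Local regularity of $z$.} One needs $z\in W^{2,p}_{\mathrm{loc}}$, so that the equation for $z$ holds pointwise a.e. and $wz'$ is absolutely continuous on $(0,T_j]$. This follows from the $C^1$ dependence $q\mapsto E_m\in W^{2,p}(I)$ in the proof of Proposition \ref{prop:nodal-C1}. Alternatively, work directly with the weak formulation, tested with $E\chi_{(\delta,T_j)}$.

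\emph{The boundary term at the singular endpoint.} One must show $w(Ez'-E'z)(\delta)\to0$ as $\delta\to0$. I would first integrate over $(\delta,T_j)$. Then I would note that $E,z\in H^1_{w,R}$ and that $wE'$, $wz'$ are absolutely continuous, with $(wE')'$, $(wz')'\in L^1$ near $0$ by the $L^p_w$ control. Hence $wE'$ and $wz'$ have limits at $0$. These limits must vanish, otherwise $|E'|^2w\sim r^{-(n-1)}$ would fail to be integrable for $n\ge2$. Since $E$ and $z$ are bounded near $0$, this forces the boundary term to vanish.

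If bounding $E$ and $z$ near $0$ is delicate, an averaging argument avoids it. Finiteness of $\int_0^{\epsilon}(|E|+|E'|)(|z|+|z'|)w\dd r$ gives a sequence $\delta_k\to0$ along which the boundary term tends to $0$, and that is all the identity requires.
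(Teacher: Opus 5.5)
Your route is the paper's: the chain rule applied to $E_s(T_{j,m}(q_s))=0$, the Wronskian identity for the differentiated eigenvalue equation integrated over $(0,T_j)$, and substitution of $\lambda'=\int_0^R hE^2w\dd r$ together with $A_j+B_j=1$. However, the computation as written contains two sign errors that cancel.

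First, $\frac{d}{dr}\bigl[w(Ez'-E'z)\bigr]=(wz')'E-(wE')'z$. So the correct identity is $\int_0^{T}\bigl[-(wz')'E+(wE')'z\bigr]\dd r=\bigl[w(E'z-Ez')\bigr]_0^{T}$, and the boundary term at $T_j$ is $+w(T_j)E'(T_j)z(T_j)$, as in the paper.

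Second, start from your own display $-w(T_j)E'(T_j)z(T_j)=-B_j\int_0^{T_j}hE^2w\dd r+A_j\int_{T_j}^RhE^2w\dd r=-w(T_j)E'(T_j)^2\int_0^RH_jE^2hw\dd r$. Dividing by $-w(T_j)E'(T_j)$ gives $z(T_j)=+E'(T_j)\int_0^RH_jE^2hw\dd r$, which would produce the wrong sign for $D_qT_{j,m}(q)[h]$. Your stated value of $z(T_j)$ is correct only because this division step is also off by a sign. Fix the bracket and divide by $w(T_j)E'(T_j)$.

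At the origin, your main argument is sound and fills in the paper's one-line remark. It runs as follows: the limits of $wE'$ and $wz'$ exist because $(wE')',(wz')'\in L^1$ near $0$; they vanish by finiteness of the weighted energy; and $E$, $z$ are bounded near $0$, which holds for $p>n/2$ by local elliptic regularity.

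The averaging fallback does not work. With $\beta=w(Ez'-E'z)$, the stated integrability only yields $\int_0^\epsilon|\beta|\dd r<\infty$. That is compatible with $\beta$ tending to a nonzero constant, and $\beta$ does have a limit at $0$, since $\beta'=w(\lambda'-h)E^2\in L^1$. One would need something like $\int_0^\epsilon|\beta(r)|\,r^{-1}\dd r<\infty$, which the stated bound does not give.

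The cleanest way to avoid the singular endpoint altogether is the computation you offered only as a consistency check. Integrate over $(T_j,R)$, where the endpoint is regular and $E(R)=z(R)=0$. This gives $-w(T_j)E'(T_j)z(T_j)=\lambda'B_j-\int_{T_j}^RhE^2w\dd r=w(T_j)E'(T_j)^2\int_0^RH_jE^2hw\dd r$. The only global input is then $\lambda'=\int_0^RhE^2w\dd r$, which Lemma \ref{lem:eigenvalue-derivative} proves in weak form with no boundary term at $r=0$.
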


\begin{proof}
Let
\[
        q_s=q+sh,
        \qquad
        E_s=E_m(\cdot;q_s),
        \qquad
        \lambda_s=\lambda_m(q_s),
        \qquad
        T_s=T_{j,m}(q_s).
\]
The eigenfunctions are normalized in $L^2_w(0,R)$ and chosen with a continuous
sign.  Put
\[
        E=E_m(\cdot;q),
        \qquad
        \lambda=\lambda_m(q),
        \qquad
        T_j=T_{j,m}(q),
\]
and
\[
        z=\left.\frac{\dd}{\dd s}E_s\right|_{s=0},
        \qquad
        \lambda'=\left.\frac{\dd}{\dd s}\lambda_s\right|_{s=0}.
\]
Differentiating the eigenvalue equation gives, in the weak sense and hence
locally classically away from the origin,
\begin{equation}\label{eq:z-equation}
        -(wz')'+w(q-\lambda)z
        =
        w(\lambda'-h)E .
\end{equation}
The unperturbed equation is
\begin{equation}\label{eq:E-equation}
        -(wE')'+w(q-\lambda)E=0 .
\end{equation}
Multiplying \eqref{eq:z-equation} by $E$, multiplying
\eqref{eq:E-equation} by $z$, and subtracting yields
\begin{equation}\label{eq:wronskian}
        \frac{\dd}{\dd r}
        \left[
        w(r)\bigl(z(r)E'(r)-E(r)z'(r)\bigr)
        \right]
        =
        w(r)(\lambda'-h(r))E(r)^2 .
\end{equation}
The boundary term at the origin vanishes in the sense that
\[
        \lim_{r\to0}
        w(r)\bigl(z(r)E'(r)-E(r)z'(r)\bigr)
        =
        0 .
\]
Indeed, regular radial solutions are bounded near the origin and have finite
radial energy, while $w(r)=r^{n-1}\to0$.  Equivalently, one may integrate over
$(\delta,T_j)$ and then let $\delta\downarrow0$.

Integrating \eqref{eq:wronskian} over $(0,T_j)$ and using $E(T_j)=0$, we get
\[
        w(T_j)z(T_j)E'(T_j)
        =
        \int_0^{T_j}(\lambda'-h)E^2w\,\dd r .
\]
Since $E_s(T_s)=0$, differentiation at $s=0$ gives
\[
        z(T_j)+E'(T_j)D_qT_{j,m}(q)[h]=0 .
\]
Therefore
\[
        D_qT_{j,m}(q)[h]
        =
        \frac{
        \displaystyle
        \int_0^{T_j}(h-\lambda')E^2w\,\dd r
        }{
        w(T_j)E'(T_j)^2
        } .
\]
By Lemma \ref{lem:eigenvalue-derivative},
\[
        \lambda'
        =
        \int_0^R hE^2w\,\dd r .
\]
Using
\[
        A_j=\int_0^{T_j}E^2w\,\dd r,
        \qquad
        B_j=\int_{T_j}^R E^2w\,\dd r,
        \qquad
        A_j+B_j=1,
\]
we obtain
\[
\begin{aligned}
        D_qT_{j,m}(q)[h]
        &=
        \frac{
        \displaystyle
        B_j\int_0^{T_j}hE^2w\,\dd r
        -
        A_j\int_{T_j}^RhE^2w\,\dd r
        }{
        w(T_j)E'(T_j)^2
        } .
\end{aligned}
\]
By the definition of $H_j$, this is exactly \eqref{eq:nodal-derivative}.
\end{proof}
\begin{corollary}\label{cor:translation-diff}
For every $j=1,\ldots,m-1$,
\[
        \int_0^R H_j(r;q)E_m(r;q)^2w(r)\dd r=0.
\]
Consequently $D_qT_{j,m}(q)[1]=0$, {which is the infinitesimal form of the global translation invariance}
$T_{j,m}(q+a)=T_{j,m}(q)$ {for every constant $a\in\mathbb R$}.
\end{corollary}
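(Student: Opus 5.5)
The proof has two parts: an explicit computation of the integral from the definition of $H_j$, and then a comparison of that identity with the nodal derivative formula in Theorem~\ref{thm:derivative}.

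\emph{The integral.} Write $E=E_m(\cdot;q)$ and $T_j=T_{j,m}(q)$. The function $H_j$ in \eqref{eq:Hj-def} is piecewise constant, with a jump only at the single point $T_j$, which carries no weighted mass. We therefore split
\[
\int_0^R H_jE^2w\dd r=\frac{1}{w(T_j)E'(T_j)^2}\left(B_j\int_0^{T_j}E^2w\dd r-A_j\int_{T_j}^RE^2w\dd r\right)
\]
and recognize the two integrals as $A_j$ and $B_j$ from their definitions. The bracket is then $B_jA_j-A_jB_j=0$. The prefactor is finite and nonzero because $T_j\in(0,R)$ gives $w(T_j)>0$, and $E'(T_j)\neq0$ by the simplicity of zeros in Proposition~\ref{prop:radial-oscillation}. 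The normalization \eqref{eq:eigen-normalization} is not even needed here; it only gives $A_j+B_j=1$.

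\emph{The derivative in direction $1$.} The constant function $h\equiv1$ lies in $\Lpwt$, since $\int_0^R w\dd r=R^n/n<\infty$. We insert $h=1$ into \eqref{eq:nodal-derivative} from Theorem~\ref{thm:derivative}. This gives $D_qT_{j,m}(q)[1]=\int_0^R H_jE^2w\dd r=0$.

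\emph{The global invariance.} We justify $T_{j,m}(q+a)=T_{j,m}(q)$ directly. The operator $-w^{-1}(wy')'+(q+a)y$ equals the original operator plus $a\cdot\mathrm{Id}$, so it has the same eigenfunctions with all eigenvalues shifted by $a$. The ordering of the eigenvalues is preserved, so $E_m(\cdot;q+a)=\pm E_m(\cdot;q)$, and hence the zeros coincide.

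It remains to see that the vanishing derivative is the infinitesimal form of this invariance. We differentiate the constant function $s\mapsto T_{j,m}(q+s)$ at $s=0$, which is legitimate by Proposition~\ref{prop:nodal-C1}. The result is $D_qT_{j,m}(q)[1]=0$, which agrees with the computation above. The argument has no real obstacle; the only point needing care is that $H_j$ is well defined and that $h\equiv1$ is admissible, both of which were checked above.
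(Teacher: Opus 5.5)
Your proposal is correct and follows the paper's proof: the paper likewise evaluates the integral directly from the definition of $H_j$ as $(B_jA_j-A_jB_j)/(w(T_j)E'(T_j)^2)=0$, and you are right that the normalization $A_j+B_j=1$, which the paper cites, is not actually needed for this cancellation. Your direct verification of $T_{j,m}(q+a)=T_{j,m}(q)$ through the eigenvalue shift is the same argument the paper gives later in Theorem~\ref{thm:global-nonunique}.
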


\begin{proof}
Using $A_j+B_j=1$ and the definition of $H_j$, the integral equals
\[
        \frac{B_jA_j-A_jB_j}{w(T_j)E'(T_j)^2}=0.
\]
\end{proof}

\section{Realization and geometry of finite nodal constraints}\label{sec:constraints-geometry}

We first show that finite radial nodal constraints are always admissible.  This
is a structural point: the optimization problems above are not built on an
implicit non-emptiness assumption.

\begin{theorem}\label{thm:nonempty-full}
For every $m\ge2$ and every $\tau=(\tau_1,\ldots,\tau_{m-1})\in\Omega_m$, there
exists a bounded piecewise constant radial potential $q_\tau$ such that
$T_m(q_\tau)=\tau$.  Consequently $\Sset^{(m)}_\tau\ne\emptyset$ for every
$\tau\in\Omega_m$, and $\Sset_{i,m}(T^*)\ne\emptyset$ for every $T^*\in(0,R)$.
\end{theorem}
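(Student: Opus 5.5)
We construct $q_\tau$ piecewise constant on the partition $0=\tau_0<\tau_1<\cdots<\tau_{m-1}<\tau_m=R$, with $q_\tau\equiv c_k$ on $(\tau_{k-1},\tau_k)$. The idea is to fix a common spectral value, which by translation invariance (Corollary \ref{cor:translation-diff}) we may take to be $\lambda=0$. We then choose the constants $c_k<0$ so that on each nodal subinterval the equation $-(wy')'+wc_ky=0$ has a solution of one sign vanishing exactly at the prescribed endpoints. Gluing these pieces with alternating signs and matched derivatives gives the $m$-th eigenfunction.

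\emph{Step 1: the first interval.} On $(0,\tau_1)$ the regular solution of $-y''-\frac{n-1}{r}y'=\mu y$ with $\mu=-c_1>0$ is $y(r)=r^{-\alpha'}J_{\alpha'}(\sqrt{\mu}\,r)$, where $\alpha'=(n-2)/2$. Choose $\mu_1=(j_{\alpha',1}/\tau_1)^2$. Then $y$ is positive on $[0,\tau_1)$ and has a simple zero at $\tau_1$.

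\emph{Step 2: the interior intervals.} On $[\tau_{k-1},\tau_k]$ with $k\ge2$ we need a constant $\mu_k>0$ such that the solution with $y(\tau_{k-1})=0$ and prescribed slope $y'(\tau_{k-1})=s_{k-1}$ has its first zero exactly at $\tau_k$. This is the Dirichlet principal eigenvalue problem on $(\tau_{k-1},\tau_k)$ for the operator $-w^{-1}(wy')'$, which is regular since $\tau_{k-1}>0$. Take $\mu_k$ to be its principal eigenvalue, and let the corresponding eigenfunction be of one sign, scaled so that its slope at $\tau_{k-1}$ equals the outgoing slope $s_{k-1}$ of the previous piece. This scaling is possible because both slopes are nonzero; signs alternate automatically.

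The resulting glued function is $C^1$ on $(0,R)$ and satisfies $-(wy')'+wq_\tau y=0$ on each piece. Hence it is a weak solution of the equation with $q=q_\tau:=-\mu_k$ on piece $k$ and $\lambda=0$. It is regular at the origin and vanishes at $R$.

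\emph{Step 3: identifying the index.} By construction, $y$ is an eigenfunction for $q_\tau$ with exactly $m-1$ interior zeros, namely $\tau_1,\dots,\tau_{m-1}$, all simple. By Proposition \ref{prop:radial-oscillation}, eigenvalues are simple and the $\ell$-th eigenfunction has exactly $\ell-1$ zeros, so $0=\lambda_m(q_\tau)$. Therefore $y$ is proportional to $E_m(\cdot;q_\tau)$, and $T_m(q_\tau)=\tau$. The potential $q_\tau$ is bounded, since it takes finitely many values.

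For the single-node set $\Sset_{i,m}(T^*)$, pick any $\tau\in\Omega_m$ with $\tau_i=T^*$ and apply the construction above.

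The main obstacle is making Step 2 rigorous. One must check that a regular singular-free Sturm--Liouville Dirichlet problem on $(\tau_{k-1},\tau_k)$ with weight $w$ has a positive principal eigenfunction with nonzero endpoint derivatives, which follows from standard regular theory and uniqueness for the initial value problem. One must also verify that the glued function lies in $H^1_{w,R}$ and solves the weak equation. The latter follows by integrating by parts piece by piece: the boundary terms cancel by the $C^1$ matching of $y$ at each $\tau_k$.
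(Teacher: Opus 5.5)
Your proposal is correct and follows essentially the same construction as the paper: a piecewise constant potential $\lambda_*-\mu_k$ built from the principal eigenvalues $\mu_k$ of $-w^{-1}(wy')'$ on the nodal subintervals, with the pieces glued with alternating signs and matched slopes (equivalent to the paper's weighted-flux matching, since $w(\tau_k)>0$), and the index $m$ identified from the zero count via Proposition \ref{prop:radial-oscillation}. The differences are cosmetic. You fix $\lambda_*=0$, for which no appeal to translation invariance is actually needed. You also make the singular first interval explicit through $r^{-(n-2)/2}J_{(n-2)/2}(\sqrt{\mu_1}\,r)$, whereas the paper keeps $\lambda_*$ arbitrary and uses an abstract principal eigenfunction together with the boundary point lemma.
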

\begin{proof}
Set
\[
        \tau_0=0,\qquad \tau_m=R,
        \qquad
        I_k=(\tau_{k-1},\tau_k),\quad k=1,\ldots,m .
\]
On $I_1$, let $\phi_1>0$ be the first eigenfunction of
\[
        -w^{-1}(w\phi_1')'=\mu_1\phi_1,
        \qquad
        \phi_1'(0)=0,
        \qquad
        \phi_1(\tau_1)=0 .
\]
Here $\phi_1'(0)=0$ is understood as the natural radial regularity condition at
the origin.  The first eigenfunction can be chosen strictly positive in
$(0,\tau_1)$, and the boundary point lemma gives
\[
        \phi_1'(\tau_1^-)<0 .
\]

For $k=2,\ldots,m$, let $\phi_k>0$ be the first eigenfunction of the regular
Dirichlet problem
\[
        -w^{-1}(w\phi_k')'=\mu_k\phi_k,
        \qquad
        \phi_k(\tau_{k-1})=\phi_k(\tau_k)=0 .
\]
Since $\phi_k>0$ in $I_k$, the boundary point lemma gives
\[
        \phi_k'(\tau_{k-1}^+)>0,
        \qquad
        \phi_k'(\tau_k^-)<0,
        \qquad k=2,\ldots,m .
\]

Fix $\lambda_*\in\mathbb R$ and define
\[
        q_\tau(r)=\lambda_*-\mu_k,
        \qquad r\in I_k .
\]
Since there are only finitely many intervals and each $\mu_k$ is finite,
$q_\tau$ is a bounded piecewise constant function; in particular,
\[
        q_\tau\in L^\infty(0,R)\subset L^p_w(0,R).
\]
Moreover, on each interval $I_k$,
\[
        -w^{-1}(w\phi_k')'+q_\tau\phi_k
        =
        \mu_k\phi_k+(\lambda_*-\mu_k)\phi_k
        =
        \lambda_*\phi_k .
\]

We now glue the eigenfunctions with alternating signs.  Choose $a_1>0$.  Having
chosen $a_k>0$, choose $a_{k+1}>0$ so that the weighted fluxes match at
$\tau_k$:
\begin{equation}\label{eq:flux-gluing}
        (-1)^{k-1}a_kw(\tau_k)\phi_k'(\tau_k^-)
        =
        (-1)^ka_{k+1}w(\tau_k)\phi_{k+1}'(\tau_k^+),
        \qquad k=1,\ldots,m-1 .
\end{equation}
This is possible because
\[
        \phi_k'(\tau_k^-)<0,
        \qquad
        \phi_{k+1}'(\tau_k^+)>0 .
\]
Indeed, since $w(\tau_k)>0$, \eqref{eq:flux-gluing} is equivalent to
\[
        a_{k+1}
        =
        a_k\frac{-\phi_k'(\tau_k^-)}{\phi_{k+1}'(\tau_k^+)}
        >0 .
\]
Thus the constants $a_k$ are determined recursively.

Define
\[
        y(r)=(-1)^{k-1}a_k\phi_k(r),
        \qquad r\in I_k .
\]
Then $y$ is continuous, because both adjacent pieces vanish at each interface
$\tau_k$.  Moreover, \eqref{eq:flux-gluing} gives
\[
        (wy')(\tau_k^-)=(wy')(\tau_k^+),
        \qquad k=1,\ldots,m-1 .
\]
Therefore no Dirac mass is produced by differentiating $wy'$ across the
interfaces.

Equivalently, for every test function $\psi\in H^1_{w,R}(0,R)$, integration by
parts on the subintervals $I_k$ gives
\[
        \sum_{k=1}^m\int_{I_k} y'\psi' w\,\dd r
        +
        \sum_{k=1}^m\int_{I_k} q_\tau y\psi w\,\dd r
        =
        \lambda_*\sum_{k=1}^m\int_{I_k} y\psi w\,\dd r .
\]
Here all interior boundary terms cancel by \eqref{eq:flux-gluing}; the endpoint
contribution at $r=0$ vanishes by the radial regularity condition, and the
Dirichlet endpoint condition at $R$ is encoded in $y(R)=0$.

Thus $y$ is a weak solution of
\[
        -w^{-1}(wy')'+q_\tau y=\lambda_*y,
        \qquad
        y'(0)=0,
        \qquad
        y(R)=0 .
\]
Since each $\phi_k$ is strictly positive in $I_k$, the glued function $y$ has no
zeros inside any $I_k$.  Hence its only interior zeros are precisely
\[
        \tau_1,\ldots,\tau_{m-1}.
\]
Therefore $y$ has exactly $m-1$ interior zeros.  By the radial
Sturm--Liouville oscillation theorem, $y$ corresponds to the $m$-th radial
eigenvalue of the potential $q_\tau$. Consequently,
\[
        \lambda_*=\lambda_m(q_\tau),
        \qquad
        T_m(q_\tau)=\tau .
\]
This proves $S_\tau^{(m)}\neq\varnothing$.

It remains to prove the single-node statement.  Let $T^*\in(0,R)$ and fix an
admissible pair $(i,m)$.  Choose $i-1$ points in $(0,T^*)$ and
$m-1-i$ points in $(T^*,R)$, and set $\tau_i=T^*$.  Then
$\tau=(\tau_1,\ldots,\tau_{m-1})\in\Omega_m$.  By the construction above, there
exists a bounded piecewise constant potential $q_\tau$ such that
\[
        T_m(q_\tau)=\tau .
\]
In particular,
\[
        T_{i,m}(q_\tau)=T^* .
\]
Hence
\[
        S_{i,m}(T^*)\neq\varnothing .
\]
The proof is complete.
\end{proof}
\begin{lemma}\label{lem:independent}
For fixed $m\ge2$ and $q\in\Lpwt$, the linear functionals
\[
        D_qT_{1,m}(q),\ldots,D_qT_{m-1,m}(q)
\]
are linearly independent on $\Lpwt$.
\end{lemma}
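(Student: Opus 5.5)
We use the explicit nodal derivative formula of Theorem \ref{thm:derivative}. Write $E=E_m(\cdot;q)$, $T_0=0$, $T_m=R$, and $J_k=(T_{k-1},T_k)$ for $k=1,\dots,m$. By \eqref{eq:Hj-def} each $H_j$ is piecewise constant, with one value on $(0,T_j)$ and another on $(T_j,R)$. So every functional $D_qT_{j,m}(q)$ is a linear combination of the $m$ basic functionals
\[
        \ell_k[h]=\int_{J_k}hE^2w\,\dd r,\qquad k=1,\ldots,m .
\]
Explicitly,
\[
        D_qT_{j,m}(q)=c_j\Bigl(B_j\sum_{k\le j}\ell_k-A_j\sum_{k>j}\ell_k\Bigr),
        \qquad
        c_j=\frac{1}{w(T_j)E'(T_j)^2}>0 .
\]
The constant $c_j$ is finite and positive because $T_j$ is a simple zero (Proposition \ref{prop:radial-oscillation}) and $w(T_j)>0$.

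The first step is to show that $\ell_1,\dots,\ell_m$ are linearly independent on $\Lpwt$. Take $h=\chi_{J_k}$, which lies in $L^\infty\subset\Lpwt$. Then $\ell_i[\chi_{J_k}]=\delta_{ik}\int_{J_k}E^2w\,\dd r$. This number is strictly positive since $E\not\equiv0$ on $J_k$; indeed $E$ has no zeros inside $J_k$. Evaluating any relation $\sum_k\beta_k\ell_k=0$ on these test functions therefore gives $\beta_k=0$ for every $k$.

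The second step reduces the claim to linear algebra. Suppose $\sum_{j=1}^{m-1}\alpha_jD_qT_{j,m}(q)=0$. Expanding in the $\ell_k$ and using their independence, we obtain for each $k=1,\dots,m$
\[
        \sum_{j\ge k}\alpha_jc_jB_j-\sum_{j<k}\alpha_jc_jA_j=0 .
\]
Subtract the equation for $k$ from the one for $k+1$, for $k=1,\dots,m-1$. The $j=k$ term moves from the first sum to the second, so the difference is $-\alpha_kc_k(B_k+A_k)=-\alpha_kc_k$, using $A_k+B_k=1$. Hence $\alpha_kc_k=0$, and since $c_k>0$ this gives $\alpha_k=0$ for all $k$.

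An equivalent, more conceptual way to see the second step is through the annihilator. The last equation ($k=m$) is automatically consistent, reflecting Corollary \ref{cor:translation-diff}. The span of the $D_qT_{j,m}(q)$ therefore lies in the $(m-1)$-dimensional subspace of $\mathrm{span}\{\ell_k\}$ consisting of functionals annihilated by $h\equiv1$. In that subspace the coefficient matrix is triangular-type with nonzero pivots $c_j$.

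There is no real obstacle in this argument. The only points to check are that the test functions $\chi_{J_k}$ are admissible, that $\int_{J_k}E^2w>0$, and that $E'(T_j)\ne0$, so that $c_j$ is finite and nonzero. All three follow from Proposition \ref{prop:radial-oscillation}.
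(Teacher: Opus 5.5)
Your proof is correct and follows essentially the paper's argument. Testing with $\chi_{J_k}$ (so that the coefficient of each $\ell_k$ vanishes) is the paper's observation that the step function $\sum_j\alpha_jH_j$ must vanish on every open nodal interval. Subtracting consecutive equations is exactly taking the jump of that step function at $T_k$, which isolates $-\alpha_k c_k$ because only $H_k$ jumps there.
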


\begin{proof}
Suppose $\sum_{j=1}^{m-1}c_jD_qT_{j,m}(q)=0$.  By Theorem
\ref{thm:derivative},
\[
 \int_0^R\left(\sum_{j=1}^{m-1}c_jH_j(r;q)\right)E_m(r;q)^2h(r)w(r)\dd r=0
\]
for every $h\in\Lpwt$.  Since $E_m^2>0$ away from its nodes, the step function
$\sum c_jH_j$ vanishes on every open nodal interval.  At the node $T_j$, the
function $H_j$ has the jump
\[
        H_j(T_j^+)-H_j(T_j^-)
        =-\frac{1}{w(T_j)E_m'(T_j)^2}\ne0,
\]
whereas $H_\ell$ has no jump at $T_j$ when $\ell\ne j$.  Taking jumps gives
$c_j=0$ for each $j$.
\end{proof}

\begin{theorem}\label{thm:manifold}
For every $m\ge2$ and $\tau\in\Omega_m$, the set $\Sset^{(m)}_\tau$ is a
$C^1$ submanifold of $\Lpwt$ of codimension $m-1$.  For every admissible
$(i,m,T^*)$, the single-node set $\Sset_{i,m}(T^*)$ is a $C^1$ hypersurface of
$\Lpwt$.
\end{theorem}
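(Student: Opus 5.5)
The plan is to apply the regular value (submersion) theorem for $C^1$ maps between Banach spaces to the nodal map
\[
        T_m=(T_{1,m},\ldots,T_{m-1,m}):\Lpwt\to\R^{m-1},
\qquad
        \Sset^{(m)}_\tau=T_m^{-1}(\tau).
\]
Proposition \ref{prop:nodal-C1} makes each component locally $C^1$, so $T_m$ is $C^1$ on $\Lpwt$, with values in $\Omega_m$ by Proposition \ref{prop:radial-oscillation}. By Theorem \ref{thm:nonempty-full} the level set is nonempty, so it suffices to check that every $q\in\Sset^{(m)}_\tau$ is a regular point. This means that $D_qT_m(q):\Lpwt\to\R^{m-1}$ is surjective and that its kernel is complemented.

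First I establish surjectivity. Lemma \ref{lem:independent} shows that the functionals $D_qT_{1,m}(q),\ldots,D_qT_{m-1,m}(q)$ are linearly independent. A standard finite-dimensional linear algebra argument then produces $h_1,\ldots,h_{m-1}\in\Lpwt$ with $D_qT_{j,m}(q)[h_k]=\delta_{jk}$. To do this, consider the map $h\mapsto (D_qT_{j,m}(q)[h])_j$. If its image were a proper subspace of $\R^{m-1}$, some nonzero vector $c$ would annihilate it, giving $\sum_j c_jD_qT_{j,m}(q)=0$, a contradiction. Hence the derivative is onto.

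Next I check that the kernel is complemented. The kernel $N=\ker D_qT_m(q)$ is closed, because each functional is continuous by Theorem \ref{thm:derivative}: $H_jE_m^2\in L^\infty$, which lies in $L^{\pd}_w$. Its complement is $V=\operatorname{span}\{h_1,\ldots,h_{m-1}\}$, with the bounded projection $P h=h-\sum_k D_qT_{k,m}(q)[h]\,h_k$ onto $N$. So $\Lpwt=N\oplus V$ with $\dim V=m-1$.

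Finally I apply the implicit function theorem to
\[
        \Phi(n,v)=T_m(q+n+v)-\tau,
\]
defined on $N\times V$ near $(0,0)$. Its partial derivative $\partial_v\Phi(0,0)=D_qT_m(q)|_V$ is an isomorphism $V\to\R^{m-1}$. The theorem gives a $C^1$ map $g$ from a neighbourhood of $0$ in $N$ into $V$ such that, locally, $\Sset^{(m)}_\tau=\{q+n+g(n)\}$. This is a $C^1$ chart, so $\Sset^{(m)}_\tau$ is a $C^1$ submanifold of codimension $m-1$, with tangent space $N$.

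The single-node case follows by the same argument with the scalar map $T_{i,m}$ alone. Here $D_qT_{i,m}(q)\neq0$, either by Lemma \ref{lem:independent} with one functional or directly from the nonzero jump of $H_i$ at $T_i$. This makes $\Sset_{i,m}(T^*)$ a closed level set of codimension one, that is, a $C^1$ hypersurface, and it is nonempty by Theorem \ref{thm:nonempty-full}.

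The only delicate point is that Proposition \ref{prop:nodal-C1} gives merely local $C^1$ regularity on neighbourhoods of each point. Since the submanifold property is local, this suffices, provided the chart is built inside a neighbourhood where the implicit-function continuation coincides with the globally ordered nodes. That coincidence is exactly the identification $T(q)=T_{i,m}(q)$ made at the end of the proof of Proposition \ref{prop:nodal-C1}, so no further obstacle is expected.
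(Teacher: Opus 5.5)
Your proposal is correct and follows essentially the same route as the paper: $C^1$ regularity of $T_m$ from Proposition~\ref{prop:nodal-C1}, surjectivity of $D_qT_m(q)$ from Lemma~\ref{lem:independent}, and the Banach-space implicit function theorem. You also make explicit three steps the paper's short proof leaves implicit: linear independence of the functionals gives surjectivity, the finite-codimensional kernel is complemented via the bounded projection $Ph=h-\sum_k D_qT_{k,m}(q)[h]\,h_k$, and the implicit function theorem yields the local graph chart.
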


\begin{proof}
The full nodal map $T_m:\Lpwt\to\R^{m-1}$ is locally $C^1$ by Proposition
\ref{prop:nodal-C1}.  Lemma \ref{lem:independent} says that its differential has
rank $m-1$ at every point.  The Banach-space implicit function theorem therefore
implies that $T_m^{-1}(\tau)$ is a $C^1$ submanifold of codimension $m-1$.  The
one-node case is the same argument with a single nonzero derivative.
\end{proof}

{We now
record the non-uniqueness of potentials realizing prescribed nodal data before
any target potential \(q_0\) or variational selection criterion is imposed.}
Let
\[
        I=\{(i_\ell,m_\ell):\ell=1,\ldots,N\}
\]
be a finite family of admissible nodal indices and define
\[
        T_I(q)=\big(T_{i_1,m_1}(q),\ldots,T_{i_N,m_N}(q)\big).
\]
For a prescribed vector $\tau\in\R^N$, set
\[
        \Sset_I(\tau)=\{q\in\Lpwt:T_I(q)=\tau\}.
\]

\begin{theorem}\label{thm:global-nonunique}
Let $q\in\Lpwt$ and $a\in\R$.  Then
\[
        T_{i,m}(q+a)=T_{i,m}(q)
\]
for every admissible pair $(i,m)$.  Consequently, if $q\in\Sset_I(\tau)$, then
$q+a\in\Sset_I(\tau)$ for every $a\in\R$.  {Thus nodal radii alone cannot
distinguish a potential from any of its constant shifts; in particular, no
finite set of radial nodal data determines the potential globally.}
\end{theorem}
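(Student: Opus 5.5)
The plan is to argue directly at the level of the eigenvalue problem, with no appeal to the derivative formula. Let $q\in\Lpwt$ and $a\in\R$. Since the constant function $a$ is bounded, it lies in $\Lpwt$ because $\int_0^R w\dd r=R^n/n<\infty$; hence $q+a\in\Lpwt$, and Lemma \ref{lem:form-bounded} and Proposition \ref{prop:radial-oscillation} apply to both potentials. The quadratic forms satisfy
\[
a_{q+a}[y]=a_q[y]+a\norm{y}_{2,w}^2,\qquad y\in H^1_{w,R}(0,R),
\]
on the same form domain. Consequently the associated self-adjoint operators differ by $a$ times the identity. Every eigenpair $(\lambda,E)$ of $q$ therefore gives the eigenpair $(\lambda+a,E)$ of $q+a$, with the same eigenfunction, and conversely.

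The next step is to check that the indexing is preserved. The shift $\lambda\mapsto\lambda+a$ is strictly increasing, so it maps the ordered simple spectrum of Proposition \ref{prop:radial-oscillation} bijectively onto the ordered spectrum of $q+a$. Hence
\[
\lambda_m(q+a)=\lambda_m(q)+a.
\]
Each eigenspace is one-dimensional, and the normalization \eqref{eq:eigen-normalization} is unchanged, so
\[
E_m(\cdot;q+a)=\pm E_m(\cdot;q).
\]
There is a cross-check that avoids min--max entirely. The function $E_m(\cdot;q)$ solves the equation for $q+a$ at the level $\lambda_m(q)+a$, and by Proposition \ref{prop:radial-oscillation} it has exactly $m-1$ interior zeros. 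By the same proposition, an eigenfunction with exactly $m-1$ interior zeros can only belong to the $m$-th eigenvalue.

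A sign change does not move the zero set, so the interior zeros of the two eigenfunctions coincide and keep their order. This gives $T_{i,m}(q+a)=T_{i,m}(q)$ for every admissible pair $(i,m)$. Applying this componentwise to $T_I$ shows that $q\in\Sset_I(\tau)$ implies $q+a\in\Sset_I(\tau)$. Since $a\ne0$ gives $q+a\ne q$ in $\Lpwt$, the map $a\mapsto q+a$ is injective, so $\Sset_I(\tau)$ contains a full line of distinct potentials. Thus no finite nodal data can determine $q$ globally.

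No step is a real obstacle. The only point that needs care is the index identification $\lambda_m(q+a)=\lambda_m(q)+a$, and it follows either from the monotonicity of the shift applied to the ordered spectrum or from the zero-counting characterization in Proposition \ref{prop:radial-oscillation}. As a consistency check, differentiating in $a$ recovers Corollary \ref{cor:translation-diff}, but the argument does not depend on it.
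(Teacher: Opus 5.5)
Your proposal is correct and follows the same route as the paper: adding the constant $a$ shifts every radial eigenvalue by $a$ and leaves the eigenfunctions, and hence their zeros, unchanged. You also make explicit two points the paper's short proof leaves implicit, namely that $a\in L^p_w(0,R)$ and that the shift preserves the eigenvalue indexing, so that $\lambda_m(q+a)=\lambda_m(q)+a$.
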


\begin{proof}
If $E_m(\cdot;q)$ is an eigenfunction associated with $\lambda_m(q)$, then
\[
        -w^{-1}(wE_m')'+qE_m=\lambda_m(q)E_m.
\]
Replacing $q$ by $q+a$ shifts the eigenvalue to $\lambda_m(q)+a$ and leaves the
eigenfunction unchanged.  Nodal points depend only on the zeros of the
eigenfunction, and hence all radial nodal points are unchanged.
\end{proof}

{Theorem \ref{thm:global-nonunique} is the global version of Corollary
\ref{cor:translation-diff}: the corollary records the same translation
invariance at the differential level, while the theorem states the exact
invariance for every constant shift of the potential.}

\begin{definition}
A point $q\in\Sset_I(\tau)$ is called regular for $T_I$ if
$DT_I(q):\Lpwt\to\R^N$ is onto.
\end{definition}

\begin{theorem}\label{thm:local-nonunique}
Let $q\in\Sset_I(\tau)$ be a regular point.  Then $\Sset_I(\tau)$ is, near $q$, a
$C^1$ submanifold of $\Lpwt$ of codimension $N$.  Its tangent space is
$\ker DT_I(q)$, which is infinite-dimensional.  In particular, every
neighborhood of $q$ contains infinitely many distinct potentials with the same
finite nodal data, and there are nonconstant perturbation directions preserving
the finite nodal data to first order.
\end{theorem}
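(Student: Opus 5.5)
The plan is to combine the local $C^1$ regularity of the nodal maps with the Banach-space submersion theorem, and then to read off the infinite dimensionality of the tangent space from the finite rank of $DT_I(q)$.

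\emph{Step 1: $T_I$ is a $C^1$ submersion at $q$.} By Proposition~\ref{prop:nodal-C1}, each component $q\mapsto T_{i_\ell,m_\ell}(q)$ is $C^1$ on a neighborhood of $q$. Hence $T_I:\Lpwt\to\R^N$ is $C^1$ near $q$. By the regularity assumption, $DT_I(q)$ is onto $\R^N$. Its kernel $K=\ker DT_I(q)$ is closed, being the kernel of a bounded operator, and it has codimension $N$.

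\emph{Step 2: complement and implicit function theorem.} Choose $h_1,\ldots,h_N\in\Lpwt$ with $DT_I(q)[h_k]=e_k$. Their span $V$ is a closed complement of $K$, so $\Lpwt=K\oplus V$. Define
\[
        \Phi(k,v)=T_I(q+k+v)-\tau,\qquad (k,v)\in K\times V .
\]
Then $\partial_v\Phi(0,0)=DT_I(q)|_V$ is an isomorphism of $V$ onto $\R^N$. The implicit function theorem gives a $C^1$ map $g$ from a neighborhood of $0$ in $K$ into $V$ with $g(0)=0$ and $Dg(0)=0$. Near $q$, the set $\Sset_I(\tau)$ is then exactly the graph $\{q+k+g(k)\}$. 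This shows that $\Sset_I(\tau)$ is locally a $C^1$ submanifold of codimension $N$ with tangent space $K$.

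\emph{Step 3: $K$ is infinite-dimensional.} Since $\Lpwt$ is infinite-dimensional and $K$ has finite codimension $N$, $K$ is infinite-dimensional. For an explicit argument, take $N+1$ bounded functions with pairwise disjoint supports. Their span has dimension $N+1$, so it meets $K$ nontrivially. Repeating this with $M+N$ such functions gives $\dim K\ge M$ for every $M$.

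\emph{Step 4: infinitely many nearby potentials.} The graph parametrization $k\mapsto q+k+g(k)$ is injective, because the $K$-components differ. Every small neighborhood of $0$ in $K$ contains infinitely many points, so every neighborhood of $q$ contains infinitely many distinct potentials in $\Sset_I(\tau)$.

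\emph{Step 5: nonconstant directions.} By Corollary~\ref{cor:translation-diff}, the constant function $1$ lies in $K$. Since $\dim K\ge2$, $K$ contains a direction that is not a constant multiple of $1$, i.e.\ a nonconstant direction preserving the nodal data to first order.

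The only step needing care is Step 2, specifically checking that $K$ is complemented. This holds automatically here because $K$ has finite codimension. The remaining steps are routine.
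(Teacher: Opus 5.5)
Your proposal is correct and follows the same route as the paper. Both apply the Banach-space implicit function theorem to the submersion $T_I$, deduce that $\ker DT_I(q)$ is infinite-dimensional because $DT_I(q)$ has finite rank, and use translation invariance to put the constant direction in the kernel, so that nonconstant kernel directions also exist. You supply the details the paper's three-line proof leaves implicit: the explicit complement $V$, the graph chart with $Dg(0)=0$, the dimension count, and the injectivity argument for infinitely many nearby potentials.
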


\begin{proof}
The statement follows from the Banach-space implicit function theorem.  The
ambient space $\Lpwt$ is infinite-dimensional, whereas the range of $DT_I(q)$ is
finite-dimensional.  Hence the kernel is infinite-dimensional.  The constant
direction belongs to the kernel by Theorem \ref{thm:global-nonunique}, but an
infinite-dimensional kernel contains nonconstant directions as well.
\end{proof}

\begin{corollary}\label{cor:single-local-nonunique}
For every admissible pair $(i,m)$ and every $q\in\Lpwt$, the level set
\[
        \{\tilde q:T_{i,m}(\tilde q)=T_{i,m}(q)\}
\]
is locally an infinite-dimensional $C^1$ hypersurface near $q$.
\end{corollary}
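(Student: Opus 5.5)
The plan is to apply Theorem \ref{thm:local-nonunique} to the one-element index family $I=\{(i,m)\}$ with $\tau=T_{i,m}(q)$. That theorem requires $q$ to be a regular point, which here means checking that
\[
        D_qT_{i,m}(q):\Lpwt\to\R
\]
is onto. A linear functional onto $\R$ is surjective exactly when it is nonzero. So the whole task reduces to showing $D_qT_{i,m}(q)\neq0$ for every $q\in\Lpwt$.

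To prove this, I will use the derivative formula of Theorem \ref{thm:derivative}, which gives
\[
        D_qT_{i,m}(q)[h]=\int_0^R H_i(r;q)E_m(r;q)^2h(r)w(r)\,\dd r .
\]
\begin{itemize}
\item Take $h=\chi_{(0,T_i)}$, the indicator of $(0,T_i)$, where $T_i=T_{i,m}(q)$. This $h$ is bounded, so it lies in $\Lpwt$.
\item With this choice the derivative becomes $B_i A_i/(w(T_i)E_m'(T_i)^2)$.
\item The denominator is finite and positive, because $w(T_i)>0$ and the zero $T_i$ is simple by Proposition \ref{prop:radial-oscillation}.
\item Both $A_i$ and $B_i$ are strictly positive, since $E_m^2w>0$ almost everywhere on each of $(0,T_i)$ and $(T_i,R)$.
\end{itemize}
Hence $D_qT_{i,m}(q)\neq0$. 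Equivalently, this is the case $N=1$ of Lemma \ref{lem:independent}, where linear independence of a single functional is the same as being nonzero.

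With regularity established, Theorem \ref{thm:local-nonunique} (or directly the Banach-space implicit function theorem applied to the locally $C^1$ map of Proposition \ref{prop:nodal-C1}) shows that the level set is, near $q$, a $C^1$ submanifold of codimension one, i.e.\ a hypersurface. Its tangent space is $\ker D_qT_{i,m}(q)$. This kernel is a closed subspace of codimension one in the infinite-dimensional space $\Lpwt$, so it is infinite-dimensional, and therefore the hypersurface is infinite-dimensional.

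There is no serious obstacle here. The only point needing care is that regularity must hold at every $q$, not only at generic ones, and the positivity of $A_iB_i$ together with the simplicity of the node handles this uniformly in $q$.
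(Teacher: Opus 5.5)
Your proof is correct and follows the paper's route: you use the nodal derivative formula of Theorem \ref{thm:derivative} to show $D_qT_{i,m}(q)\neq0$, so $q$ is a regular point, and then you invoke Theorem \ref{thm:local-nonunique}. Your test function $h=\chi_{(0,T_i)}$, which gives the value $A_iB_i/(w(T_i)E_m'(T_i)^2)>0$, just spells out the nonvanishing that the paper asserts in one line.
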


\begin{proof}
The derivative $D_qT_{i,m}(q)$ is nonzero by Theorem \ref{thm:derivative}; hence
$q$ is a regular point for the one-component nodal map.  The conclusion follows
from Theorem \ref{thm:local-nonunique}.
\end{proof}

\section{Variational selection and radial Euler--Lagrange equations}\label{sec:variational}

\begin{theorem}\label{thm:existence}
Assume $p>\frac{n}{2}$ and let $q_0\in\Lpwt$.
\begin{enumerate}[label=\textup{(\roman*)}]
\item For every $m\ge2$ and $\tau\in\Omega_m$, the problem
\eqref{eq:main-full-problem} admits at least one minimizer.
\item For every admissible $(i,m,T^*)$, the problem \eqref{eq:main-single-problem}
admits at least one minimizer.
\end{enumerate}
\end{theorem}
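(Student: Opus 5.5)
The plan is to apply the direct method of the calculus of variations in the reflexive space $\Lpwt$, $1<p<\infty$. We use three ingredients already established: the constraint sets are nonempty (Theorem \ref{thm:nonempty-full}), the nodal maps are weakly sequentially continuous on bounded sets (Lemma \ref{lem:weak-continuity}), and the norm is weakly lower semicontinuous. Cases (i) and (ii) are handled identically, so we describe (i) and note the changes for (ii).

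First, by Theorem \ref{thm:nonempty-full}, the set $\Sset^{(m)}_\tau$ contains a bounded piecewise constant $q_\tau$. Hence
\[
d:=\inf_{q\in\Sset^{(m)}_\tau}\norm{q-q_0}_{p,w}\le \norm{q_\tau-q_0}_{p,w}<\infty .
\]
Choose a minimizing sequence $q_k\in\Sset^{(m)}_\tau$ with $\norm{q_k-q_0}_{p,w}\to d$. Then $\norm{q_k}_{p,w}\le d+1+\norm{q_0}_{p,w}$ for large $k$, so $\{q_k\}$ is bounded.

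Since $\Lpwt$ is reflexive for $1<p<\infty$ (it is an $L^p$ space over the $\sigma$-finite measure $w\,dr$), we can extract a subsequence with $q_k\rightharpoonup \bar q$ weakly in $\Lpwt$.

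The key step is closedness of the constraint under this weak limit. Lemma \ref{lem:weak-continuity} applies because the sequence is bounded and converges weakly. It gives $T_{j,m}(q_k)\to T_{j,m}(\bar q)$ for every $j=1,\ldots,m-1$. Since $T_{j,m}(q_k)=\tau_j$ for all $k$, we get $T_m(\bar q)=\tau$, that is, $\bar q\in\Sset^{(m)}_\tau$. Note that Lemma \ref{lem:weak-continuity} also guarantees that $\bar q$ has exactly $m-1$ simple nodes and that they are the limits of the nodes of $q_k$. So no nodes are lost or gained in the limit, and the ordering is preserved.

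For the single-node problem (ii), the same argument uses only the $i$-th component, $T_{i,m}(q_k)=T^*\to T_{i,m}(\bar q)$.

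Finally, $q_k-q_0\rightharpoonup \bar q-q_0$ weakly, so weak lower semicontinuity of the norm gives
\[
\norm{\bar q-q_0}_{p,w}\le\liminf_{k\to\infty}\norm{q_k-q_0}_{p,w}=d .
\]
Together with $\bar q$ being admissible, this gives equality, so $\bar q$ is a minimizer.

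The only genuine obstacle is the weak closedness of the nonconvex constraint set. Nodal maps are nonlinear, and weak convergence alone would not normally preserve them. This is exactly what Lemma \ref{lem:weak-continuity} supplies, and it relies on the hypothesis $p>n/2$ through the compact embedding into $L^{2\pd}_w(0,R)$. So in the proof I would simply verify that the minimizing sequence meets the lemma's hypotheses, namely boundedness together with weak convergence. There is no need to worry about minimizers escaping through degeneration of eigenfunctions at the singular endpoint $r=0$, since the lemma already controls the eigenfunctions in $L^2_w(0,R)$ and in $C^1_{\mathrm{loc}}$.
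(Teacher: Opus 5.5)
Your proposal is correct and follows essentially the same route as the paper: nonemptiness from Theorem \ref{thm:nonempty-full}, a bounded minimizing sequence, and a weakly convergent subsequence in the reflexive space $\Lpwt$. The constraint is then preserved in the limit by Lemma \ref{lem:weak-continuity}, and weak lower semicontinuity of the norm concludes; your explicit bound $\norm{q_k}_{p,w}\le d+1+\norm{q_0}_{p,w}$ just spells out the boundedness step that the paper states in one line.
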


\begin{proof}
We prove the full-vector case; the single-node case is identical.  The constraint
set is nonempty by Theorem \ref{thm:nonempty-full}.  Let $\{q_k\}\subset
\Sset^{(m)}_\tau$ be a minimizing sequence.  {Since $\{q_k\}$ is minimizing, $\|q_k-q_0\|_{p,w}$ is bounded; hence
$\{q_k\}$ is bounded in $L^p_w(0,R)$ }. Then $\{q_k\}$ is bounded in the
reflexive space $\Lpwt$, and hence, after passing to a subsequence,
$q_k\rightharpoonup \hat q$ weakly in $\Lpwt$.  By Lemma
\ref{lem:weak-continuity}, $T_m(\hat q)=\tau$, so $\hat q\in\Sset^{(m)}_\tau$.
Weak lower semicontinuity of the norm gives
\[
        \norm{\hat q-q_0}_{p,w}\le \liminf_{k\to\infty}\norm{q_k-q_0}_{p,w}.
\]
Thus $\hat q$ is a minimizer.
\end{proof}

We use
\[
        \phip(s)=|s|^{p-2}s,
        \qquad \phip^{-1}=\phid.
\]
Let $\hat q$ be a minimizer of the full-vector problem and assume
$\hat q\ne q_0$.  Write
\[
        \hat E=E_m(\cdot;\hat q),\qquad
        \hat\lambda=\lambda_m(\hat q),\qquad
        \hat T_j=T_{j,m}(\hat q)=\tau_j.
\]
Let $\hat H_j(r)=H_j(r;\hat q)$.

\begin{theorem}\label{thm:EL-full}
Let $\hat q\ne q_0$ be an optimal potential for \eqref{eq:main-full-problem}.
Then there exist multipliers $c_1,\ldots,c_{m-1}\in\R$ such that
\begin{equation}\label{eq:EL-full}
 \norm{\hat q-q_0}_{p,w}^{1-p}\phip(\hat q(r)-q_0(r))
 =\sum_{j=1}^{m-1}c_j\hat H_j(r)\hat E(r)^2
\end{equation}
for a.e. $r\in(0,R)$.  If
\[
        \Gamma(r)=\sum_{j=1}^{m-1}c_j\hat H_j(r)
\]
and $\Gamma_k$ is its constant value on $I_k=(\tau_{k-1},\tau_k)$, then
\begin{equation}\label{eq:q-representation-full}
 \hat q(r)-q_0(r)=
 \norm{\hat q-q_0}_{p,w}\,\operatorname{sgn}(\Gamma_k)|\Gamma_k|^{\pd-1}
 |\hat E(r)|^{2\pd-2},\qquad r\in I_k.
\end{equation}
If $\Gamma_k\ne0$ and
\[
        U_k=\left(\norm{\hat q-q_0}_{p,w}|\Gamma_k|^{\pd-1}\right)^{1/(2\pd-2)}\hat E
        \quad\hbox{on }I_k,
\]
then $U_k$ solves
\begin{equation}\label{eq:piecewise-full}
        -w^{-1}(wU_k')'+q_0(r)U_k+
        \operatorname{sgn}(\Gamma_k)|U_k|^{2\pd-2}U_k
        =\hat\lambda U_k,
        \qquad r\in I_k.
\end{equation}
If $\Gamma_k=0$, then $\hat E$ solves the corresponding linear equation with
potential $q_0$ on $I_k$.
\end{theorem}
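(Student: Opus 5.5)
The plan is to apply the Lagrange multiplier rule on the Banach space $\Lpwt$ to the functional
\[
J(q)=\norm{q-q_0}_{p,w}
\]
restricted to $\Sset^{(m)}_\tau=T_m^{-1}(\tau)$.

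The first step is to check smoothness of $J$ near $\hat q$. Since $1<p<\infty$ and $\hat q\ne q_0$, $J$ is $C^1$ in a neighborhood of $\hat q$. Its derivative is the standard duality formula
\[
DJ(\hat q)[h]=\norm{\hat q-q_0}_{p,w}^{1-p}\int_0^R \phip(\hat q-q_0)\,h\,w\,\dd r .
\]
By Proposition \ref{prop:nodal-C1} and Lemma \ref{lem:independent}, the constraint map $T_m$ is $C^1$ near $\hat q$ and $DT_m(\hat q)$ is onto $\R^{m-1}$. Theorem \ref{thm:manifold} then shows that $\hat q$ is a regular point.

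The multiplier rule therefore applies at the local minimizer $\hat q$ of $J$ on the $C^1$ submanifold. It gives $c_1,\ldots,c_{m-1}$ with $DJ(\hat q)=\sum_j c_j D_qT_{j,m}(\hat q)$ as elements of the dual $L^{\pd}_w$. To justify this, note that $DJ(\hat q)$ annihilates $\ker DT_m(\hat q)$, which is the tangent space, and a functional vanishing on the common kernel of finitely many functionals is a linear combination of them.

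Next I insert the derivative formula of Theorem \ref{thm:derivative}. Both sides are then integrals against $h\,w$ for every $h\in\Lpwt$. Since $\phip(\hat q-q_0)\in L^{\pd}_w$ and $\hat H_j\hat E^2$ is bounded, the pointwise identity \eqref{eq:EL-full} follows a.e. by duality.

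On $I_k$ the function $\Gamma$ equals the constant $\Gamma_k$, because each $\hat H_j$ is a step function with its only jump at $\tau_j$. Inverting $\phip$ via $\phid=\phip^{-1}$ gives
\[
\hat q-q_0=\norm{\hat q-q_0}_{p,w}\,\phid(\Gamma_k\hat E^2).
\]
Using $\phid(s)=\operatorname{sgn}(s)|s|^{\pd-1}$ and $\hat E^2\ge0$, this becomes $\operatorname{sgn}(\Gamma_k)|\Gamma_k|^{\pd-1}|\hat E|^{2\pd-2}$ times $\norm{\hat q-q_0}_{p,w}$, which is \eqref{eq:q-representation-full}.

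Then I substitute $\hat q=q_0+(\hat q-q_0)$ into the eigenvalue equation for $\hat E$ on $I_k$. Write $\kappa_k=\norm{\hat q-q_0}_{p,w}|\Gamma_k|^{\pd-1}$ and $U_k=\kappa_k^{1/(2\pd-2)}\hat E$. Then $|U_k|^{2\pd-2}=\kappa_k|\hat E|^{2\pd-2}$, so the nonlinear term equals $\operatorname{sgn}(\Gamma_k)\kappa_k|\hat E|^{2\pd-2}\hat E$ up to the factor relating $U_k$ and $\hat E$. Because the equation is linear in $\hat E$ apart from this term, multiplying by $\kappa_k^{1/(2\pd-2)}$ gives \eqref{eq:piecewise-full}. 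If $\Gamma_k=0$, then $\hat q=q_0$ a.e. on $I_k$ and the linear statement is immediate. The equations hold in the weak sense and hence locally classically on $I_k$, since $w$ is bounded away from $0$ on compact subsets.

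The main obstacle is the rigorous application of the multiplier rule. It requires $J$ to be $C^1$ exactly at $\hat q$, which is guaranteed by $\hat q\ne q_0$ together with uniform convexity of $L^p$ norms for $1<p<\infty$. It also requires surjectivity of $DT_m(\hat q)$, which Lemma \ref{lem:independent} supplies. The remaining steps are algebraic.
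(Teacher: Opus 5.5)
Your proposal is correct and follows the same route as the paper. You apply the Lagrange multiplier rule on the $C^1$ constraint manifold, with regularity supplied by the independence of the nodal differentials, and then use the duality formula for the derivative of $\norm{q-q_0}_{p,w}$, the nodal derivative formula, inversion by $\varphi_{p^\dagger}$, and substitution with rescaling into the eigenvalue equation. Beyond the paper's sketch, you also spell out the kernel argument for finitely many functionals and the a.e.\ identification by duality. Two small corrections. First, $J$ is $C^1$ near $\hat q$ because $q\mapsto\int_0^R|q-q_0|^p w\,dr$ is Fr\'echet differentiable for $p>1$ (uniform smoothness of $L^p_w$), not because of uniform convexity. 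Second, the duality step needs only $\hat H_j\hat E^2\in L^{p^\dagger}_w$, which the derivative formula already provides, not boundedness of $\hat E$.
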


\begin{proof}
The constraint set is a $C^1$ finite-codimensional manifold by Theorem
\ref{thm:manifold}, and the gradients of the constraints are independent by
Lemma \ref{lem:independent}.  Applying the Lagrange multiplier rule to
$q\mapsto\norm{q-q_0}_{p,w}$ gives \eqref{eq:EL-full}, since
\[
D\norm{q-q_0}_{p,w}[h]
=\norm{q-q_0}_{p,w}^{1-p}\int_0^R\phip(q-q_0)h\,w\dd r
\]
for $q\ne q_0$, and the constraint derivatives are given by Theorem
\ref{thm:derivative}.  Formula \eqref{eq:q-representation-full} follows by
applying $\phip^{-1}=\phid$.  Substitution into the eigenvalue equation for
$\hat E$ and the indicated rescaling yield \eqref{eq:piecewise-full}.
\end{proof}

For a single prescribed node, put $T^*=T_{i,m}(\hat q)$,
$\hat E=E_m(\cdot;\hat q)$, and
\[
        \hat A=\int_0^{T^*}\hat E^2w\dd r,
        \qquad
        \hat B=\int_{T^*}^R\hat E^2w\dd r.
\]

\begin{theorem}\label{thm:single-critical}
Let $\hat q\ne q_0$ be a minimizer of \eqref{eq:main-single-problem}.  Then there
are $\varepsilon\in\{-1,1\}$ and positive constants $a,b$ such that
\[
\hat q(r)-q_0(r)=
\begin{cases}
\varepsilon a|\hat E(r)|^{2\pd-2},&0<r<T^*,\\
-\varepsilon b|\hat E(r)|^{2\pd-2},&T^*<r<R.
\end{cases}
\]
Define
\[
U(r)=
\begin{cases}
a^{1/(2\pd-2)}\hat E(r),&0<r<T^*,\\
b^{1/(2\pd-2)}\hat E(r),&T^*<r<R.
\end{cases}
\]
Then $U$ satisfies
\begin{equation}\label{eq:single-left}
        -r^{1-n}(r^{n-1}U')'+q_0(r)U+
        \varepsilon |U|^{2\pd-2}U=\hat\lambda U,
        \qquad 0<r<T^*,
\end{equation}
and
\begin{equation}\label{eq:single-right}
        -r^{1-n}(r^{n-1}U')'+q_0(r)U-
        \varepsilon |U|^{2\pd-2}U=\hat\lambda U,
        \qquad T^*<r<R.
\end{equation}
Moreover,
\[
        U'(0)=0,
        \qquad U(T^*)=0,
        \qquad U(R)=0,
\]
$U$ has exactly $i-1$ zeros in $(0,T^*)$ and $m-1-i$ zeros in $(T^*,R)$, and
\begin{equation}\label{eq:mass-balance}
        \int_0^{T^*}U(r)^2w(r)\dd r
        =\int_{T^*}^R U(r)^2w(r)\dd r.
\end{equation}
Finally,
\begin{equation}\label{eq:recover-q}
\hat q(r)=
\begin{cases}
q_0(r)+\varepsilon |U(r)|^{2\pd-2},&0<r<T^*,\\
q_0(r)-\varepsilon |U(r)|^{2\pd-2},&T^*<r<R.
\end{cases}
\end{equation}
\end{theorem}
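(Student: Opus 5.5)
We specialize the Lagrange multiplier argument of Theorem~\ref{thm:EL-full} to a single constraint. By Corollary~\ref{cor:single-local-nonunique}, $\Sset_{i,m}(T^*)$ is a $C^1$ hypersurface near $\hat q$, and $D_qT_{i,m}(\hat q)\ne0$. Since $\hat q\ne q_0$, the functional $q\mapsto\norm{q-q_0}_{p,w}$ is differentiable at $\hat q$. The multiplier rule therefore gives a constant $c\in\R$ with
\[
 \norm{\hat q-q_0}_{p,w}^{1-p}\phip(\hat q-q_0)=c\,H_i(r;\hat q)\hat E(r)^2
 \quad\hbox{a.e. in }(0,R).
\]
The left side is not identically zero because $\hat q\ne q_0$, so $c\ne0$. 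Set $\varepsilon=\operatorname{sgn}(c)$ and $D=\norm{\hat q-q_0}_{p,w}$. By \eqref{eq:Hj-def}, $H_i$ equals $\hat B/(w(T^*)\hat E'(T^*)^2)$ on $(0,T^*)$ and $-\hat A/(w(T^*)\hat E'(T^*)^2)$ on $(T^*,R)$. Applying $\phid=\phip^{-1}$ and using $\phid(s\,\hat E^2)=\operatorname{sgn}(s)|s|^{\pd-1}|\hat E|^{2\pd-2}$, we obtain the claimed form with
\[
 a=D\Bigl(\tfrac{|c|\hat B}{w(T^*)\hat E'(T^*)^2}\Bigr)^{\pd-1},
 \qquad
 b=D\Bigl(\tfrac{|c|\hat A}{w(T^*)\hat E'(T^*)^2}\Bigr)^{\pd-1}.
\]
Both constants are positive, because $\hat A,\hat B>0$: $\hat E\not\equiv0$ on each side of a simple node.

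Next we insert this expression for $\hat q$ into $-w^{-1}(w\hat E')'+\hat q\hat E=\hat\lambda\hat E$ on each side of $T^*$. With $U=a^{1/(2\pd-2)}\hat E$ on $(0,T^*)$ we have $a|\hat E|^{2\pd-2}=|U|^{2\pd-2}$. Multiplying the equation by the constant $a^{1/(2\pd-2)}$ then gives \eqref{eq:single-left}. The same computation with $b$ gives \eqref{eq:single-right}, and \eqref{eq:recover-q} is a restatement of the formula for $\hat q$. The boundary conditions are inherited from $\hat E$: the radial regularity $U'(0)=0$, $U(R)=0$, and $U(T^*)=0$ since $T^*$ is a node of $\hat E$. $U$ has the same zeros as $\hat E$, and by Proposition~\ref{prop:radial-oscillation} these are the $m-1$ simple nodes with $T^*=T_{i,m}$. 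Hence there are $i-1$ zeros in $(0,T^*)$ and $m-1-i$ zeros in $(T^*,R)$.

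The main point is the mass balance \eqref{eq:mass-balance}. It reads $a^{1/(\pd-1)}\hat A=b^{1/(\pd-1)}\hat B$. From the formulas above,
\[
 a^{1/(\pd-1)}=D^{1/(\pd-1)}\kappa\hat B,
 \qquad
 b^{1/(\pd-1)}=D^{1/(\pd-1)}\kappa\hat A,
 \qquad
 \kappa=\frac{|c|}{w(T^*)\hat E'(T^*)^2}.
\]
Therefore both sides of the mass balance equal $D^{1/(\pd-1)}\kappa\hat A\hat B$.

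The only delicate points are bookkeeping ones. First, the exponent $1/(2\pd-2)$ must be tracked carefully, since $(2\pd-2)\cdot\tfrac{1}{2}=\pd-1$ is what makes $U^2=a^{1/(\pd-1)}\hat E^2$. Second, the piecewise equations hold only in the weak/local sense on each open side: $\hat q$ jumps at $T^*$, but $\hat E\in C^1$ there, so no interface condition for $U$ beyond $U(T^*)=0$ is needed. The mass balance is exactly the remnant of Corollary~\ref{cor:translation-diff}, namely the orthogonality of the constraint gradient to constants.
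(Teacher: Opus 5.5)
Your proposal is correct and follows the paper's proof essentially step for step. The common route is the one-constraint multiplier rule, then $c\neq 0$ from $\hat q\neq q_0$, inversion by $\varphi_{p^\dagger}$ with the same explicit constants $a,b$, substitution into the eigenvalue equation, and the mass balance read off as $a^{1/(p^\dagger-1)}\hat A=b^{1/(p^\dagger-1)}\hat B$. Your closing remark, which the paper does not make, is also right: since $|\hat q-q_0|^{p-1}=U^2$, the mass balance is the multiplier equation tested against constants, i.e.\ $\int_0^R\varphi_p(\hat q-q_0)\,w\,dr=c\,\|\hat q-q_0\|_{p,w}^{p-1}\,D_qT_{i,m}(\hat q)[1]=0$ by Corollary~\ref{cor:translation-diff}.
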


\begin{proof}
The single-node multiplier equation is
\[
 \norm{\hat q-q_0}_{p,w}^{1-p}\phip(\hat q-q_0)=c\hat H(r)\hat E^2,
\]
where $\hat H$ is positive on $(0,T^*)$ and negative on $(T^*,R)$.  The multiplier
$c$ cannot vanish; otherwise $\hat q=q_0$, contrary to the hypothesis.  Put
$\varepsilon=\operatorname{sgn}c$.  Applying $\phip^{-1}=\phid$ gives the stated
representation of $\hat q-q_0$ with $a,b>0$.  Substitution into the eigenvalue
equation for $\hat E$ gives \eqref{eq:single-left} and \eqref{eq:single-right},
and \eqref{eq:recover-q} follows from the definition of $U$.

It remains to prove \eqref{eq:mass-balance}.  Since
\[
 \hat H=\frac{\hat B}{w(T^*)\hat E'(T^*)^2}\quad\hbox{on }(0,T^*),
 \qquad
 |\hat H|=\frac{\hat A}{w(T^*)\hat E'(T^*)^2}\quad\hbox{on }(T^*,R),
\]
one has
\[
        a=\norm{\hat q-q_0}_{p,w}|c|^{\pd-1}
        \left(\frac{\hat B}{w(T^*)\hat E'(T^*)^2}\right)^{\pd-1},
\]
and
\[
        b=\norm{\hat q-q_0}_{p,w}|c|^{\pd-1}
        \left(\frac{\hat A}{w(T^*)\hat E'(T^*)^2}\right)^{\pd-1}.
\]
Thus
\[
        a^{1/(\pd-1)}\hat A=b^{1/(\pd-1)}\hat B,
\]
which is exactly \eqref{eq:mass-balance}.
\end{proof}

\begin{corollary}\label{cor:norm-identity}
In the setting of Theorem \ref{thm:single-critical},
\begin{equation}\label{eq:norm-identity}
        \norm{\hat q-q_0}_{p,w}=\norm{U}_{2\pd,w}^{2/(p-1)}.
\end{equation}
\end{corollary}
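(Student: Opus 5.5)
The plan is to compute $\norm{U}_{2\pd,w}$ directly from the representation in Theorem \ref{thm:single-critical} and compare it with $\norm{\hat q-q_0}_{p,w}$. By \eqref{eq:recover-q}, on each side of $T^*$ we have $|\hat q-q_0|=|U|^{2\pd-2}$ almost everywhere. Since $(2\pd-2)p=2\pd$ (indeed $\pd-1=1/(p-1)$, so $(2\pd-2)p=2p/(p-1)=2\pd$), it follows that
\[
 \norm{\hat q-q_0}_{p,w}^p=\int_0^R|U|^{(2\pd-2)p}w\dd r=\norm{U}_{2\pd,w}^{2\pd}.
\]
This identity alone gives $\norm{\hat q-q_0}_{p,w}=\norm{U}_{2\pd,w}^{2\pd/p}=\norm{U}_{2\pd,w}^{2/(p-1)}$, because $\pd/p=1/(p-1)$. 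So the stated formula \eqref{eq:norm-identity} reduces to an exponent bookkeeping.

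The key steps, in order, are the following. First, I will justify that $|\hat q-q_0|=|U|^{2\pd-2}$ a.e. on $(0,R)$, using \eqref{eq:recover-q} on $(0,T^*)$ and $(T^*,R)$; the single point $T^*$ is a null set. Second, I will raise this to the power $p$ and integrate against $w$. Third, I will simplify the exponents using $\pd=p/(p-1)$.

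The only potential subtlety is consistency with the normalization hidden in $a$ and $b$. Those constants contain the factor $\norm{\hat q-q_0}_{p,w}$ itself, so one might worry about circularity. However, the identity uses only the pointwise relation \eqref{eq:recover-q}, so no circularity arises. As a cross-check, I could verify it from the explicit formulas for $a,b$ combined with the Lagrange relation. Testing \eqref{eq:EL-full} against $\hat q-q_0$ gives
\[
\norm{\hat q-q_0}_{p,w}=c\int \hat H\hat E^2(\hat q-q_0)w\dd r,
\]
which should be consistent. This cross-check is not needed for the proof, so I expect no real obstacle beyond the exponent arithmetic.
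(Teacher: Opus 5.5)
Your proposal is correct and follows the paper's own proof: you use the pointwise relation $|\hat q-q_0|=|U|^{2\pd-2}$ from \eqref{eq:recover-q}, integrate its $p$-th power against $w$, and apply the exponent identity $(2\pd-2)p=2\pd$. As you note, the cross-check via the multiplier equation is not needed.
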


\begin{proof}
By \eqref{eq:recover-q}, $|\hat q-q_0|=|U|^{2\pd-2}$.  Hence
\[
        \norm{\hat q-q_0}_{p,w}^p
        =\int_0^R |U|^{(2\pd-2)p}w\dd r
        =\int_0^R |U|^{2\pd}w\dd r,
\]
because $(2\pd-2)p=2\pd$.  Taking the $p$-th root gives
\eqref{eq:norm-identity}.
\end{proof}

\section{Radial energy identity and obstruction to one-dimensional reconstruction}\label{sec:energy}

Let $I\Subset(0,R)$ be an interval on which $U$ solves
\begin{equation}\label{eq:energy-equation}
        -U''-\frac{n-1}{r}U'+q_0(r)U+
        \eta |U|^{2\pd-2}U=\lambda U,
        \qquad \eta\in\{-1,1\}.
\end{equation}
Assume in this section that $q_0\in W^{1,1}_{\mathrm{loc}}(I)$.

\begin{theorem}\label{thm:energy}
For every solution of \eqref{eq:energy-equation}, the energy
{\(\mathcal E_U(r)\)} is defined by
\begin{equation}\label{eq:energy-def}
       {\mathcal E_U(r)}
        =\frac12 U'(r)^2+\frac{\lambda-q_0(r)}2U(r)^2
        -\frac{\eta}{2\pd}|U(r)|^{2\pd}
\end{equation}
satisfies
\begin{equation}\label{eq:energy-balance}
        {\mathcal E_U'(r)}
        = -\frac{n-1}{r}U'(r)^2
        -\frac12 q_0'(r)U(r)^2
\end{equation}
for a.e. $r\in I$.  In particular, if $q_0$ is constant, then
\begin{equation}\label{eq:energy-dissipation}
        {\mathcal E_U'(r)}
        = -\frac{n-1}{r}U'(r)^2\le0.
\end{equation}
\end{theorem}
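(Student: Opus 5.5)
The identity is a direct differentiation along solutions, so the plan is to compute $\mathcal E_U'$ by the product and chain rules and then substitute the equation \eqref{eq:energy-equation} to eliminate $U''$. First I will settle regularity. On $I\Subset(0,R)$ the coefficient $(n-1)/r$ is smooth and bounded, and $U$ is continuous. Under the standing assumptions, $q_0\in W^{1,1}_{\mathrm{loc}}(I)$ is locally bounded. Hence the right-hand side $U''=-\frac{n-1}{r}U'+(q_0-\lambda)U+\eta|U|^{2\pd-2}U$ belongs to $L^1_{\mathrm{loc}}(I)$ (in fact $L^\infty_{\mathrm{loc}}$). It follows that $U\in W^{2,1}_{\mathrm{loc}}(I)$, so $U'$ is locally absolutely continuous. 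Consequently $U'^2$, $U^2$ and $(\lambda-q_0)U^2$ are locally absolutely continuous, the last being a product of $W^{1,1}_{\mathrm{loc}}$ functions with one of them bounded. Since $2\pd>2$, the function $s\mapsto |s|^{2\pd}/(2\pd)$ is $C^1$ with derivative $|s|^{2\pd-2}s$, so $|U|^{2\pd}$ is $C^1$ along $U$. This shows $\mathcal E_U$ is locally absolutely continuous and can be differentiated a.e.

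Next I will differentiate term by term:
\[
\mathcal E_U'=U'U''+(\lambda-q_0)UU'-\tfrac12 q_0'U^2-\eta|U|^{2\pd-2}UU'.
\]
Substituting $U''$ from the equation gives
\[
U'U''=-\frac{n-1}{r}U'^2+(q_0-\lambda)UU'+\eta|U|^{2\pd-2}UU'.
\]
The $(\lambda-q_0)UU'$ terms cancel, and so do the nonlinear terms. What remains is exactly \eqref{eq:energy-balance}, valid a.e. in $I$.

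For constant $q_0$, we have $q_0'=0$, and $r>0$ on $I$ gives \eqref{eq:energy-dissipation} with the sign $\le0$. The only step requiring care is the regularity justification, namely the absolute continuity of $U'$ and of $q_0U^2$. That is where the assumptions $I\Subset(0,R)$ (avoiding the singular endpoint) and $q_0\in W^{1,1}_{\mathrm{loc}}$ are used; the algebra itself is routine.
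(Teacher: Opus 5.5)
Your proposal is correct and follows the same route as the paper: differentiate the energy along solutions (equivalently, multiply the equation by $U'$), so that the $(\lambda-q_0)UU'$ and nonlinear terms cancel and only $-\frac{n-1}{r}U'^2-\frac12 q_0'U^2$ remains. Your regularity step ($U\in W^{2,1}_{\mathrm{loc}}(I)$, and local absolute continuity of $q_0U^2$ and of $|U|^{2\pd}$) justifies the a.e.\ differentiation, which the paper leaves implicit.
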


\begin{proof}
Writing \eqref{eq:energy-equation} as
\[
 U''+\frac{n-1}{r}U'+(\lambda-q_0(r))U-
 \eta |U|^{2\pd-2}U=0
\]
and multiplying by $U'$ gives
\[
 U''U'+(\lambda-q_0)UU'-\eta |U|^{2\pd-2}UU'
 =-\frac{n-1}{r}U'^2.
\]
The left-hand side is
\[
 \frac{\dd}{\dd r}\left(\frac12U'^2+\frac{\lambda-q_0}{2}U^2
 -\frac{\eta}{2\pd}|U|^{2\pd}\right)+\frac12q_0'U^2.
\]
This proves \eqref{eq:energy-balance}; \eqref{eq:energy-dissipation} follows
when $q_0$ is constant.
\end{proof}

\begin{corollary}\label{cor:phase-obstruction}
Assume $n>1$ and $q_0$ is constant.  On any nodal interval where $U$ is not
constant, the energy {\(\mathcal E_U\)} defined by \eqref{eq:energy-def} is strictly decreasing except at
isolated critical points of $U$.  Thus the radial critical equation cannot be
reduced to a closed autonomous Hamiltonian orbit on that interval.
\end{corollary}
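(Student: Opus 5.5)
The plan is to derive the claim directly from the energy balance \eqref{eq:energy-dissipation} of Theorem~\ref{thm:energy}, which applies because $q_0$ is constant and hence $q_0'\equiv0$. On a nodal interval $J$ (we may take $J\Subset(0,R)$, or exhaust a nodal interval touching $0$ by compact subintervals, since only interior points matter) we have
\[
        \mathcal E_U'(r)=-\frac{n-1}{r}U'(r)^2\le0,
\]
and the coefficient $(n-1)/r$ is strictly positive because $n>1$ and $r>0$. So $\mathcal E_U$ is nonincreasing, and $\mathcal E_U'(r)<0$ exactly where $U'(r)\neq0$.

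The first step is to show that the zero set $Z=\{r\in J: U'(r)=0\}$ is discrete in $J$. On a nodal interval $U$ has no zeros, so $U$ does not vanish there. If $U'$ vanished at an accumulation point $r_0\in J$, then $U''(r_0)=0$ by continuity of $U''$, which holds because $U$ is a $C^2$ solution of \eqref{eq:energy-equation} with continuous coefficients on $J$. The equation would then force
\[
        (\lambda-q_0)U(r_0)=\eta|U(r_0)|^{2\pd-2}U(r_0).
\]
With $U'(r_0)=0$, this means $U(r_0)$ is an equilibrium value for the planar system $(U,U')$. The vector field $(U,\,-\tfrac{n-1}{r}U'-(\lambda-q_0)U+\eta|U|^{2\pd-2}U)$ is locally Lipschitz away from $U=0$, and $U(r_0)\neq0$ on the nodal interval. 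Uniqueness for this nonautonomous initial value problem would then give $U\equiv U(r_0)$ on $J$, contradicting the assumption that $U$ is not constant. Therefore $Z$ is discrete, $\mathcal E_U'<0$ on $J\setminus Z$, and integrating shows $\mathcal E_U$ is strictly decreasing on $J$. Isolated points of vanishing derivative do not affect strict monotonicity.

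For the obstruction statement, I will argue by contradiction. Suppose the trajectory $r\mapsto(U(r),U'(r))$ on $J$ lay on a closed orbit of an autonomous Hamiltonian system whose first integral is the natural energy $\mathcal E_U$ of the one-dimensional problem. Then $\mathcal E_U$ would be constant along the trajectory, which contradicts strict decrease. Equivalently, if $(U,U')$ returned to the same phase point at two radii $r_1<r_2$, then $\mathcal E_U(r_1)=\mathcal E_U(r_2)$, again contradicting strict monotonicity. So no periodic or closed-orbit phase-plane reconstruction is possible.

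I expect the only delicate step to be the discreteness of the critical points, that is, excluding accumulation of zeros of $U'$. I will handle it with the uniqueness argument above, which requires $U\neq0$ to avoid the non-Lipschitz nonlinearity when $2\pd-2<1$. This is exactly why the statement is made on a nodal interval. The rest follows immediately from Theorem~\ref{thm:energy}.
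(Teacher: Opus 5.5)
Your proposal is correct and follows the paper's route: the paper integrates $\mathcal E_U'=-(n-1)U'^2/r$ and simply asserts that $U'$ cannot vanish identically on a subinterval, while your equilibrium/uniqueness argument supplies that justification, together with the isolatedness of critical points, which the paper leaves implicit. One small correction: since $2\pd-2=2/(p-1)>0$, the map $u\mapsto|u|^{2\pd-2}u$ is $C^1$ (its derivative is $(2\pd-1)|u|^{2\pd-2}$), so your uniqueness argument also works where $U$ vanishes, and the restriction $U\neq0$ is unnecessary.
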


\begin{proof}
This follows from \eqref{eq:energy-dissipation}.  If $U$ is not constant, then
$U'$ is not identically zero on any nontrivial subinterval, and therefore the
integral of $(n-1)U'^2/r$ over such a subinterval is positive.
\end{proof}

\begin{remark}
A standard second-order sufficient condition can be formulated on the constraint
manifold.  Let $J(q)=p^{-1}\norm{q-q_0}_{p,w}^p$ and let
\[
        \mathcal L(q)=J(q)-\sum_{j=1}^{m-1}c_jT_{j,m}(q)
\]
be the Lagrangian at a full-vector critical point $\hat q$.  If the Lagrangian
is twice Fr\'echet differentiable along the constraint manifold and its second
variation is {coercive} on
\[
        \{h\in\Lpwt:D_qT_{j,m}(\hat q)[h]=0,\\ j=1,\ldots,m-1\},
\]
then $\hat q$ is a strict local minimizer.  We keep this as a criterion rather
than a main theorem, since the present paper uses it only as a guide to the
linearized shooting/nondegeneracy problem.
\end{remark}

\section{Benchmark rigidity and near-benchmark selection}\label{sec:benchmark}

We now isolate the simplest nontrivial finite nodal datum: the unique interior
node of the second radial eigenfunction.  Let
\[
        m=2,
        \qquad i=1,
        \qquad q_0\equiv c,
\]
and denote
\[
        T_c=T_{1,2}(c).
\]
The value $T_c$ is independent of $c$ up to the spectral shift, because adding a
constant to a potential leaves eigenfunctions unchanged.  For $T^*\in(0,R)$ set
\[
        \Sset(T^*)=\{q\in\Lpwt:T_{1,2}(q)=T^*\},
        \qquad
        d_c(T^*)=\inf_{q\in\Sset(T^*)}\norm{q-c}_{p,w}.
\]

\begin{theorem}\label{thm:benchmark}
Assume $p>\frac{n}{2}$ and $q_0\equiv c$.
\begin{enumerate}[label=\textup{(\alph*)}]
\item If $T^*=T_c$, then $d_c(T^*)=0$, and the unique global minimizer is
$\hat q\equiv c$.
\item If $T^*\ne T_c$, then $d_c(T^*)>0$.  Consequently every minimizer is
nonconstant and satisfies the nontrivial Euler--Lagrange equation of Theorem
\ref{thm:single-critical}.
\item For every $T^*\in(0,R)$, {the unoptimized one-node constraint is not a singleton}: if
$q\in\Sset(T^*)$, then $q+a\in\Sset(T^*)$ for every $a\in\R$.
\end{enumerate}
\end{theorem}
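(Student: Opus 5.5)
Each of the three parts reduces to results already proved, so the plan is to dispatch them in order, relying on Theorem \ref{thm:existence}, Theorem \ref{thm:global-nonunique} and Theorem \ref{thm:single-critical}.

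For (a), the constant $c$ lies in $\Sset(T_c)$ by the definition $T_c=T_{1,2}(c)$. Hence $d_c(T_c)\le\norm{c-c}_{p,w}=0$, and so $d_c(T_c)=0$. For uniqueness, any global minimizer $\hat q$ satisfies $\norm{\hat q-c}_{p,w}=0$. This forces $\hat q=c$ as an element of $\Lpwt$, that is, $w$-a.e. Since $w>0$ on $(0,R)$, this is the same as Lebesgue-a.e. on $(0,R)$.

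For (b), take $T^*\ne T_c$. By Theorem \ref{thm:existence}(ii), the infimum $d_c(T^*)$ is attained by some $\hat q\in\Sset(T^*)$. If $d_c(T^*)=0$, then $\hat q=c$ a.e. As in (a), the operator, its eigenvalues and eigenfunctions depend only on the equivalence class of $q$ in $\Lpwt$. So $T_{1,2}(\hat q)=T_{1,2}(c)=T_c\ne T^*$, a contradiction. Therefore $d_c(T^*)>0$.

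It follows that every minimizer $\hat q$ satisfies $\hat q\ne c$. Theorem \ref{thm:single-critical} then applies with $q_0\equiv c$, $m=2$, $i=1$, and gives the two-sided nonlinear equations \eqref{eq:single-left}--\eqref{eq:single-right} and the mass balance \eqref{eq:mass-balance}.

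To see that $\hat q$ is nonconstant, suppose $\hat q\equiv c'$ is constant. Theorem \ref{thm:global-nonunique} gives $T_{1,2}(c')=T_{1,2}(c+(c'-c))=T_c\ne T^*$, a contradiction. The representation \eqref{eq:recover-q} gives the same conclusion directly. There $\hat q-c$ equals $\varepsilon|U|^{2\pd-2}$ on $(0,T^*)$ and $-\varepsilon|U|^{2\pd-2}$ on $(T^*,R)$, with $U\not\equiv0$ on each side. These have opposite strict signs on the two sides, so $\hat q$ cannot be constant.

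For (c), the statement is exactly Theorem \ref{thm:global-nonunique} applied to the index family $I=\{(1,2)\}$. If $q\in\Sset(T^*)$, then $T_{1,2}(q+a)=T_{1,2}(q)=T^*$ for every $a\in\R$. The family $\{q+a\}$ consists of pairwise distinct elements of $\Lpwt$, and $\Sset(T^*)$ is nonempty by Theorem \ref{thm:nonempty-full}. Hence the constraint set is not a singleton.

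The only step that needs care is in (b): identifying $\hat q=c$ a.e. with the constant potential at the level of nodal maps. This amounts to noting that the quadratic form $a_q$ in \eqref{eq:quadratic-form} depends only on the $L^p_w$ class of $q$. Everything else is direct citation.
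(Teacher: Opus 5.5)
Your proof is correct and follows the paper's argument essentially step by step. You obtain (a) from $c\in\Sset(T_c)$ and the fact that zero distance forces $\hat q=c$ a.e.; in (b), nonconstancy follows because every constant has node $T_c$, and the Euler--Lagrange equation comes from Theorem \ref{thm:single-critical}; (c) is Theorem \ref{thm:global-nonunique}. The only minor variation is that you get $d_c(T^*)>0$ from attainment of the infimum (Theorem \ref{thm:existence}), whereas the paper uses continuity of $T_{1,2}$ along a sequence $q_k\to c$ strongly in $\Lpwt$; both are equally valid.
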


\begin{proof}
If $T^*=T_c$, then $c\in\Sset(T^*)$, so $d_c(T^*)=0$.  Any minimizer at distance
zero equals $c$ almost everywhere, which proves uniqueness.  If $T^*\ne T_c$ and
$d_c(T^*)=0$, then there are $q_k\in\Sset(T^*)$ with $q_k\to c$ strongly in
$\Lpwt$.  The nodal continuity in Theorem \ref{thm:spectral} gives
$T^*=T_{1,2}(q_k)\to T_{1,2}(c)=T_c$, a contradiction.  Hence $d_c(T^*)>0$.
The existence theorem gives a minimizer, and it cannot equal $c$. {Indeed, every constant potential has the same nodal point $T_c$}.  The
Euler--Lagrange equation follows from Theorem \ref{thm:single-critical}.  The
last statement is Theorem \ref{thm:global-nonunique}.
\end{proof}

The next result is a general $L^p$ statement.  It does not claim uniqueness of
nontrivial minimizers.  Instead it proves a sharper fact that is available in
the full range $p>\frac{n}{2}$: all near-benchmark global minimizers,
if there is more than one, have the same first-order profile.  Thus {the unoptimized set of
potentials realizing the same nodal datum may still be non-unique}, but the
variational selection has a canonical tangent direction at the constant target.

Let
\[
        F(q)=T_{1,2}(q),\qquad T_c=F(c),
\]
and let $E_2(\cdot;c)$ be the $L^2_w$-normalized second radial eigenfunction for
the constant potential.  By the nodal derivative formula, there is a function
$G\in L^{\pd}_w(0,R)$ such that
\begin{equation}\label{eq:benchmark-dual-gradient}
        DF(c)[h]=\int_0^R G(r)h(r)w(r)\dd r,
        \qquad h\in\Lpwt,
\end{equation}
where
\[
        G(r)=H_{1,2}(r;c)E_2(r;c)^2.
\]
Moreover $G\not\equiv0$.  Define
\begin{equation}
        \Theta(r)=\frac{\operatorname{sgn}G(r)|G(r)|^{\pd-1}}
        {\norm{G}_{\pd,w}^{\pd}}.
\end{equation}
Then $DF(c)[\Theta]=1$ and
\[
        \norm{\Theta}_{p,w}=\frac1{\norm{G}_{\pd,w}}.
\]

\begin{theorem}\label{thm:banach-first-order}
Assume $p>\frac{n}{2}$ and $q_0\equiv c$.  Let $T_j\to T_c$, $T_j\ne
T_c$, and let $\hat q_j$ be any global minimizer of
\[
        \min\{\norm{q-c}_{p,w}:F(q)=T_j\}.
\]
Set $\delta_j=T_j-T_c$.  Then
\begin{equation}\label{eq:distance-asymptotic}
        d_c(T_j)=\frac{|\delta_j|}{\norm{G}_{\pd,w}}+o(|\delta_j|),
\end{equation}
and
\begin{equation}\label{eq:banach-profile-asymptotic}
        \frac{\hat q_j-c}{\delta_j}\longrightarrow \Theta
        \quad\hbox{strongly in }L^p_w(0,R).
\end{equation}
Consequently every near-benchmark global minimizer has the same leading
variation, determined only by the dual nodal gradient $G$.
\end{theorem}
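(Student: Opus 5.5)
The plan is to prove the asymptotic formula by matching upper and lower bounds, and then to get the profile convergence from uniform convexity of $L^p_w$.

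\textbf{Upper bound.} For the upper bound on $d_c(T_j)$ I will build competitors along the direction $\Theta$. Set $g(s,t)=F(c+s\Theta+t\Theta^\perp)-T_c$, where $\Theta^\perp$ is a fixed direction with $DF(c)[\Theta^\perp]=0$; in fact only the scalar map $s\mapsto F(c+s\Theta)$ is needed. By Proposition \ref{prop:nodal-C1} this map is $C^1$ near $s=0$. Since $DF(c)[\Theta]=1$, it is a local $C^1$ diffeomorphism. Hence for $j$ large there is $s_j=\delta_j+o(|\delta_j|)$ with $F(c+s_j\Theta)=T_j$. Testing the minimization with this competitor gives
\[
d_c(T_j)\le |s_j|\,\norm{\Theta}_{p,w}=\frac{|\delta_j|}{\norm{G}_{\pd,w}}+o(|\delta_j|).
\]

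\textbf{Lower bound.} First, $h_j:=\hat q_j-c$ satisfies $\norm{h_j}_{p,w}=d_c(T_j)=O(|\delta_j|)\to0$, so $\hat q_j\to c$ strongly. By the $C^1$ property of $F$ (Fréchet differentiability at $c$),
\[
\delta_j=F(c+h_j)-F(c)=DF(c)[h_j]+o(\norm{h_j}_{p,w})=\int_0^R Gh_jw\dd r+o(|\delta_j|).
\]
Hölder then gives $|\delta_j|\le \norm{G}_{\pd,w}\norm{h_j}_{p,w}+o(|\delta_j|)$. This is the matching lower bound, and \eqref{eq:distance-asymptotic} follows. The only slightly delicate point is that we need Fréchet differentiability, not just Gateaux; Proposition \ref{prop:nodal-C1} gives $C^1$ in the Banach sense, so this is fine.

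\textbf{Profile convergence.} Put $v_j=h_j/\delta_j$. The two steps above give $\norm{v_j}_{p,w}\to 1/\norm{G}_{\pd,w}=\norm{\Theta}_{p,w}$ and $\int_0^R Gv_jw\dd r\to1$. So $v_j$ is asymptotically an extremal for Hölder's inequality against $G$. The extremal of norm $\norm{\Theta}_{p,w}$ with $\langle G,\cdot\rangle=1$ is unique, namely $\Theta$, by strict convexity and the equality case of Hölder.

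This identification is the main obstacle: it must be turned into strong convergence. My first approach is as follows. Since $v_j$ is bounded in the reflexive space $L^p_w$, along a subsequence $v_j\rightharpoonup v$. The functional $\langle G,\cdot\rangle$ is continuous, so $\langle G,v\rangle=1$. By weak lower semicontinuity, $\norm{v}_{p,w}\le\norm{\Theta}_{p,w}$. Hölder forces equality, and the equality case gives $v=\Theta$. Then weak convergence together with convergence of norms gives strong convergence, by the Radon--Riesz (Kadec--Klee) property of the uniformly convex space $L^p_w$ for $1<p<\infty$. Since every subsequence has a further subsequence converging to the same limit $\Theta$, the whole sequence converges. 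This yields \eqref{eq:banach-profile-asymptotic}, and the final sentence of the statement follows because the limit depends only on $G$.
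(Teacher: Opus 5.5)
Your proposal is correct and follows essentially the same route as the paper's proof. It has the same three steps: an upper bound from the one-variable implicit function theorem applied to $s\mapsto F(c+s\Theta)$, a matching lower bound from Fr\'echet differentiability of $F$ at $c$ plus H\"older's inequality against $G$, and profile convergence via a weak subsequential limit, the H\"older equality case identifying that limit as $\Theta$, and the Radon--Riesz property of the uniformly convex space $L^p_w$. The auxiliary map $g(s,t)$ involving $\Theta^\perp$ is unnecessary, as you note yourself. Using $\|\hat q_j-c\|_{p,w}\to0$ to control the Fr\'echet remainder is a slightly cleaner justification than the paper's ``uniformly on bounded sets'' phrasing.
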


\begin{proof}
The map $F$ is $C^1$ near $c$ by Theorem \ref{thm:spectral}, and
$DF(c)[\Theta]=1$.  Hence the one-dimensional map
$s\mapsto F(c+s\Theta)$ has derivative one at $s=0$.  The implicit function
theorem in one variable gives, for $T$ close to $T_c$, a number
$s(T)=T-T_c+o(|T-T_c|)$ such that $F(c+s(T)\Theta)=T$.  This admissible
competitor yields
\[
        d_c(T)\le |s(T)|\norm{\Theta}_{p,w}
        =\frac{|T-T_c|}{\norm{G}_{\pd,w}}+o(|T-T_c|).
\]
Now let $T_j\to T_c$ and let $\hat q_j$ be a global minimizer.  The preceding
upper bound gives
\[
        \norm{\hat q_j-c}_{p,w}=O(|\delta_j|).
\]
Set $h_j=(\hat q_j-c)/\delta_j$.  The sequence $\{h_j\}$ is bounded in the
reflexive space $L^p_w(0,R)$.  Since $F(c+\delta_j h_j)=T_c+\delta_j$ and $F$ is
Fr\'echet differentiable at $c$, uniformly on bounded sets one has
\[
        1=DF(c)[h_j]+o(1).
\]
By H\"older's inequality and \eqref{eq:benchmark-dual-gradient},
\[
        \norm{h_j}_{p,w}\ge
        \frac{|DF(c)[h_j]|}{\norm{G}_{\pd,w}}
        =\frac1{\norm{G}_{\pd,w}}+o(1).
\]
Together with the upper bound this proves \eqref{eq:distance-asymptotic} and
\[
        \norm{h_j}_{p,w}\to \frac1{\norm{G}_{\pd,w}}.
\]
Passing to a subsequence, $h_j\rightharpoonup h$ weakly in $L^p_w$.  Then
$DF(c)[h]=1$ and
\[
        \norm{h}_{p,w}\le \liminf_j\norm{h_j}_{p,w}
        =\frac1{\norm{G}_{\pd,w}}.
\]
Thus $h$ solves the linearized extremal problem
\[
        \min\{\norm{h}_{p,w}:DF(c)[h]=1\}.
\]
The equality case in H\"older's inequality gives the unique minimizer, namely
$h=\Theta$.  Since $L^p_w$ is uniformly convex for $1<p<\infty$, weak convergence
together with convergence of the norms implies strong convergence.  Hence every
subsequence has the same strong limit $\Theta$, which proves
\eqref{eq:banach-profile-asymptotic}.
\end{proof}

\begin{remark}
Theorem \ref{thm:banach-first-order} is the strongest conclusion that follows
from the $C^1$ nodal calculus and the uniform convexity of $L^p_w$ alone.  It
shows that possible multiple minimizers cannot bifurcate at first order.  A full
$L^p$ uniqueness theorem would require a second-order stability theory for the
nonlinear nodal constraint on the affine scale selected by $\Theta$.  In the
Hilbert regime below this stability is expressed through a local $C^2$ nodal
regularity assumption and the positive quadratic structure of the squared
norm.
\end{remark}

The preceding theorem does not assert global uniqueness of the nontrivial
optimizer for arbitrary $T^*\ne T_c$.  The next result gives a perturbative
rigidity theorem in a setting where the Hilbert geometry is compatible with the
standing $p>\frac{n}{2}$.  This is the point at which the
previous version of the manuscript used $L^2_w$ while still stating general
$p>n/2$ assumptions.  We now state the result in the consistent Hilbert regime.

Throughout the remainder of this section assume
\begin{equation}\label{eq:hilbert-regime}
        p=2,
        \qquad n=2\hbox{ or }3,
        \qquad H=L^2_w(0,R).
\end{equation}
\begin{proposition}\label{prop:C2-sufficient}
Let $\mathcal X=L^\infty(0,R)\cap L^2_w(0,R)$, endowed with the
$L^\infty$-topology.  In a sufficiently small $\mathcal X$-neighborhood of the
constant potential $c$, the nodal map
\[
        F(q)=T_{1,2}(q)
\]
is of class $C^2$ as a map from $\mathcal X$ to $\mathbb R$.
\end{proposition}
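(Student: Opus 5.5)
We will prove the $C^2$ property by writing $F(q)=T_{1,2}(q)$ as an implicitly defined zero of a $C^2$ function of $(q,T)$, exactly as in Proposition \ref{prop:nodal-C1}, but with every ingredient upgraded from $C^1$ to $C^2$. The key observation is that on $\mathcal X$ with the $L^\infty$-topology the dependence of the operator on $q$ is affine and bounded: the map $q\mapsto a_q$ is affine, and multiplication by $h\in L^\infty$ is a bounded operator on $L^2_w(0,R)$. Since $\mathcal X\hookrightarrow L^p_w$ continuously, the form $a_q$ is well defined for every $q\in\mathcal X$. Thus $q\mapsto A_q=A_0+q$ is an analytic (indeed affine) holomorphic family of type (A) in Kato's sense.

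\textbf{Step 1.} Show that the second eigenpair is $C^2$, in fact real-analytic, in $q\in\mathcal X$. Since $\lambda_2(c)$ is simple and isolated by Proposition \ref{prop:radial-oscillation}, fix a small circle $\Gamma$ around $\lambda_2(c)$ and define the Riesz projection
\[
P(q)=-\frac1{2\pi i}\oint_\Gamma (A_q-\zeta)^{-1}\dd\zeta .
\]
The resolvent identity $(A_q-\zeta)^{-1}=(A_c-\zeta)^{-1}\bigl(I+(q-c)(A_c-\zeta)^{-1}\bigr)^{-1}$ gives a Neumann series in $q-c$. This series converges in $\mathcal B(L^2_w)$ and also in $\mathcal B(L^2_w,H^1_{w,R})$ when $\|q-c\|_\infty$ is small. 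Hence $P(q)$ is analytic in $q$. Set
\[
E(q)=P(q)E_c/\|P(q)E_c\|_{2,w},\qquad \lambda_2(q)=a_q[E(q)],
\]
where $E_c=E_2(\cdot;c)$. Both are analytic from a neighborhood of $c$ in $\mathcal X$, with $E(q)$ taking values in $H^1_{w,R}$.

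\textbf{Step 2.} Upgrade the eigenfunction to the $C^1$ spatial topology near the node. Choose $I\Subset(0,R)$ containing $T_c$; on $I$ the weight is bounded above and below. On $I$ we have the equation $E''=-w'w^{-1}E'+(q-\lambda_2(q))E$. The coefficient $w'/w=(n-1)/r$ is smooth on $I$. The map $(q,E,\lambda)\mapsto (q-\lambda)E$ is bilinear and continuous from $L^\infty\times H^1(I)\times\mathbb R$ into $L^\infty(I)$. Local one-dimensional regularity then shows that $q\mapsto E(q)|_I$ is analytic into $W^{2,\infty}(I)\hookrightarrow C^1(\overline I)$. The restriction to $n=2,3$ and $p=2$ is not really needed here; it only guarantees $L^\infty\subset L^p_w$ and consistency with \eqref{eq:hilbert-regime}.

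\textbf{Step 3.} Apply the implicit function theorem. Put $\Phi(q,T)=E(q)(T)$ on $U\times I$. This map is $C^2$ because evaluation $(f,T)\mapsto f(T)$ is $C^1$ on $C^1(\overline I)\times I$. For genuine $C^2$ in $T$ we need $E(q)\in C^2$ in a way that depends continuously on $(q,T)$. Bounded $E''$ is not quite enough, since $E''$ contains $qE$ and $q$ is merely $L^\infty$. This is the main obstacle. We will handle it by avoiding the evaluation map. Write
\[
\Phi(q,T)=E(q)(t_0)+\int_{t_0}^T E(q)'(s)\dd s .
\]
Here $\partial_T\Phi=E(q)'(T)$ is $C^1$ in $(q,T)$: it is $C^1$ in $q$ into $C(\overline I)$, and $C^1$ in $T$ whenever $E(q)'$ is $C^1$. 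The latter holds because $E(q)'$ is absolutely continuous with $L^\infty$ derivative, so $E'(q)(T)$ is only Lipschitz in $T$.

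To close the gap, we will use the mixed structure: compute $D^2F$ by differentiating the explicit first-derivative formula of Theorem \ref{thm:derivative},
\[
DF(q)[h]=\int_0^R H_1(r;q)E_2(r;q)^2h\,w\dd r .
\]
Its ingredients are $A_1(q)$, $B_1(q)$, $E_2(\cdot;q)$, $E_2'(F(q);q)$ and $F(q)$. The quantity $A_1(q)=\int_0^{F(q)}E^2w$ is $C^1$ in $q$, because $F$ is $C^1$ and $E$ is $C^1$ into $L^2_w$. The troublesome term is $E_2'(F(q);q)$. Using $w E'$, whose derivative $w(q-\lambda)E$ vanishes at the node, we get that $(wE')'(F(q))=0$. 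So $q\mapsto (wE')(F(q);q)$ is differentiable, with derivative $(D_q(wE'))(F(q))[h]$ plus $0\cdot DF$. This shows $q\mapsto DF(q)$ is $C^1$ into $\mathcal X^*$, hence $F\in C^2$, and we expect this node-vanishing identity to be the crux of the argument.
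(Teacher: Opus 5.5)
Your final argument is correct, and it takes a different route from the paper's. The paper gets $C^2$ dependence of the second eigenpair from simple-eigenvalue perturbation theory. It upgrades this by local regularity to $C^2$ dependence of $E_2(\cdot;q)$ in $C^1(I)$, and then applies the implicit function theorem to $E_2(T;q)=0$. Taken literally, that last step needs $(q,T)\mapsto E_2(T;q)$ to be jointly $C^2$, and this is exactly what you observed fails on every $L^\infty$-neighborhood of $c$. Indeed, a.e.\ one has $\partial_T^2E_2(T;q)=-\frac{n-1}{T}E_2'(T;q)+(q(T)-\lambda_2(q))E_2(T;q)$, so $\partial_TE_2$ is only Lipschitz in $T$. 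With values in $C^1(I)$, the implicit function theorem therefore gives only a $C^1$ node. Your route keeps $F\in C^1$ from Proposition \ref{prop:nodal-C1} and proves instead that $q\mapsto DF(q)\in\mathcal X^*$ is $C^1$. The only non-smooth ingredient is $q\mapsto E_2'(F(q);q)$, and it is controlled because the rough part of the second derivative carries the factor $E_2$, which vanishes at the node. The paper's argument is shorter, but as written it does not justify the second derivative; yours supplies the missing mechanism. Your Step 1 also gives analyticity of the eigenpair, which is more than needed.

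To make the argument rigorous, replace the pointwise claim $(wE')'(F(q))=0$, which has no meaning for $q\in L^\infty$, by the integrated estimate
\[
\bigl|(wE_q')(T)-(wE_q')(F(q))\bigr|
=\Bigl|\int_{F(q)}^{T}w\,(q-\lambda_2(q))E_q\,\dd r\Bigr|
\le C\,|T-F(q)|^2 .
\]
This holds uniformly for $q$ near $c$, since $|E_q(r)|\le C|r-F(q)|$. Combine it with the bound $\operatorname{Lip}_I\bigl((D_qE_q[k])'\bigr)\le C\norm{k}_\infty$ from your Step 2. Together they show that $q\mapsto(wE_q')(F(q))$ is differentiable, with derivative $k\mapsto w(F(q))\,(D_qE_q[k])'(F(q))$, and that this derivative is continuous in operator norm.

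The moving endpoint in $\int_0^{F(q)}hE_q^2w\,\dd r$, with $h\in L^\infty$, needs the same treatment: $\bigl|\int_{F(q)}^{F(q')}hE_{q'}^2w\,\dd r\bigr|\le C\norm{h}_\infty|F(q)-F(q')|^3$. This is what gives $C^1$ into $\mathcal X^*$ rather than mere directional differentiability. Alternatively, differentiate $DF(q)[k]=-D_qE_q[k](F(q))/E_q'(F(q))$ directly. There, evaluating the $C^1(\bar I)$-valued map $D_qE_q[k]$ at $F(q)$ is harmless, and only the denominator needs the node estimate.

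Finally, delete the first paragraph of your Step 3. Its claim that $\Phi$ is $C^2$ is false, and ``the latter holds'' is contradicted by the very next clause. Your final argument does not use that paragraph.
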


\begin{proof}
The second radial eigenvalue at the constant potential is simple, and the
perturbations in $\mathcal X$ are bounded form perturbations.  Hence the standard
perturbation theory for simple eigenvalues yields $C^2$ dependence of the second
radial eigenpair on $q$ in the energy topology.  Choose a compact interval
$I\Subset(0,R)$ containing the benchmark node $T_c$.  On $I$ the weight is bounded
above and below away from zero, and the radial equation is an ordinary second
order equation with bounded coefficients.  Local elliptic regularity therefore
upgrades the dependence of the normalized eigenfunction to $C^2$ dependence in
$C^1(I)$.  Since $E_2'(T_c;c)\ne0$, the implicit function theorem applied to
$E_2(T;q)=0$ gives the asserted $C^2$ dependence of the zero.
\end{proof}

\begin{assumption}\label{ass:C2}
For the Hilbert $L^2_w$ result below, the nodal map $F$ is assumed to be of class
$C^2$ in an $H$-neighborhood of the constant potential $c$.
\end{assumption}

\begin{remark}
Proposition \ref{prop:C2-sufficient} shows that this hypothesis is automatic in
a natural stronger local class of bounded radial potentials.  We keep Assumption
\ref{ass:C2} in the Hilbert theorem because the theorem itself is formulated in
$L^2_w(0,R)$ and uses Hilbert-space convexity rather than an $L^\infty$ topology.
Thus the $C^2$ hypothesis is not hidden; it is stated explicitly and accompanied
by a sufficient regularity mechanism.
\end{remark}

Let $g\in H$ be the $H$-gradient of $F$ at $c$.  In the Hilbert case the dual
function $G$ in \eqref{eq:benchmark-dual-gradient} belongs to $H$ and coincides
with $g$ under the Riesz identification; hence
\[
        g(r)=H_{1,2}(r;c)E_2(r;c)^2,
\]
and $g\ne0$.
\begin{theorem}\label{thm:near-global-unique}
Assume \eqref{eq:hilbert-regime} and Assumption \ref{ass:C2}.  Then there is
$\eta>0$ such that, for every $T^*$ satisfying
\begin{equation*}
        0<|T^*-T_c|<\eta,
\end{equation*}
the Hilbert benchmark problem
\begin{equation}\label{eq:hilbert-benchmark-problem}
        \min\left\{\frac12\norm{q-c}_H^2:F(q)=T^*\right\}
\end{equation}
has a unique global minimizer $q_{T^*}$.  Moreover,
\begin{equation}\label{eq:near-expansion}
        q_{T^*}
        =
        c+\frac{T^*-T_c}{\norm{g}_H^2}g
        +
        O(|T^*-T_c|^2)
        \quad\text{in }H .
\end{equation}
\end{theorem}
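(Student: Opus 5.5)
We plan to reduce the constrained Hilbert problem to a finite-dimensional fixed-point problem near $c$. Existence of global minimizers is already provided by Theorem \ref{thm:existence}, and Theorem \ref{thm:banach-first-order} with $p=2$ gives $\norm{\hat q_{T^*}-c}_H=|T^*-T_c|/\norm{g}_H+o(|T^*-T_c|)$. It also gives $(\hat q_{T^*}-c)/(T^*-T_c)\to g/\norm{g}_H^2$ in $H$ for every choice of global minimizer. Here we use that $\Theta=g/\norm{g}_H^2$ when $p=\pd=2$. Consequently all global minimizers with $T^*$ close to $T_c$ lie in a small $H$-ball $B_\rho(c)$, and $\rho\to0$ as $T^*\to T_c$. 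Within this ball Assumption \ref{ass:C2} is in force. Hence it suffices to show that, for $|T^*-T_c|$ small, the constraint $F(q)=T^*$ has exactly one critical point of $\frac12\norm{q-c}_H^2$ inside $B_\rho(c)$.

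The critical-point system, by the Lagrange rule (Theorem \ref{thm:EL-full} in its Hilbert form), is
\[
        q-c=\mu\,\nabla F(q),\qquad F(q)=T^*,
\]
where $\nabla F(q)\in H$ is the Riesz gradient. By Assumption \ref{ass:C2}, $\nabla F$ is $C^1$ near $c$ with $\nabla F(c)=g\ne0$. Set $q=c+k$ and define
\[
        \Phi(k,\mu,T)=\bigl(k-\mu\nabla F(c+k),\;F(c+k)-T\bigr)\in H\times\R .
\]
$\Phi$ is $C^1$ and $\Phi(0,0,T_c)=0$. Its partial derivative in $(k,\mu)$ at $(0,0,T_c)$ is $(\dot k,\dot\mu)\mapsto(\dot k-\dot\mu g,\ \langle g,\dot k\rangle_H)$. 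This map is an isomorphism of $H\times\R$: given $(f,t)$, the unique preimage is $\dot\mu=(t-\langle g,f\rangle_H)/\norm{g}_H^2$ and $\dot k=f+\dot\mu g$. The Banach-space implicit function theorem then yields neighborhoods $\mathcal V\ni(0,0)$ and $(T_c-\eta,T_c+\eta)$, together with a $C^1$ map $T\mapsto(k(T),\mu(T))$. For each $T$ this map gives the \emph{unique} zero of $\Phi(\cdot,\cdot,T)$ in $\mathcal V$.

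Two points remain.

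First, every global minimizer must lie in $\mathcal V$. Its $k$-component is small by the first-order selection law. Its multiplier is $\mu=\langle k,\nabla F(c+k)\rangle/\norm{\nabla F(c+k)}^2$, which is $O(|T^*-T_c|)$ because $\nabla F(c+k)\to g\ne0$. So for $\eta$ small every minimizer corresponds to a zero of $\Phi$ in $\mathcal V$ and hence equals $c+k(T^*)$. This proves uniqueness. Existence of the minimizer and the inequality $T^*\ne T_c$, which makes it nontrivial by Theorem \ref{thm:benchmark}, are already known.

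Second, the expansion \eqref{eq:near-expansion}. Differentiating $\Phi(k(T),\mu(T),T)=0$ at $T_c$ gives $\mu'(T_c)=1/\norm{g}_H^2$ and $k'(T_c)=g/\norm{g}_H^2$. To upgrade $o$ to $O(|T^*-T_c|^2)$ we need $k$ to have Lipschitz derivative near $T_c$. We plan to obtain this from $C^2$ regularity of $F$: then $\nabla F$ is $C^1$, so $\Phi$ is $C^1$ with a locally Lipschitz derivative if $D^2F$ is locally Lipschitz. Failing that, we would use Taylor's formula directly: $k=\mu\nabla F(c+k)=\mu g+O(|\mu|\norm{k})$ and $F(c+k)-T_c=\langle g,k\rangle+O(\norm{k}^2)$. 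Together these give $\mu=(T^*-T_c)/\norm{g}^2+O(|T^*-T_c|^2)$ and hence the stated $O$-bound, using only $C^2$.

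The main obstacle is the first point: ruling out distant global minimizers that are not captured by the local implicit-function branch. This requires the a priori localization of \emph{all} global minimizers, including control of their multipliers. We will rely on Theorem \ref{thm:banach-first-order}, which gives this localization uniformly in the choice of minimizer, and on the bound of the multiplier via the nonvanishing of $\nabla F$ near $c$.
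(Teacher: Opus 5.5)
Your proof is correct, but it takes a different route from the paper.

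\textbf{The paper's route.} It splits $H=\mathrm{span}\{e\}\oplus H_0$ with $e=g/\norm{g}_H$. It then applies the implicit function theorem to $F$ to write the level set near $c$ as a graph $c+v+s(v,\tau)e$ over $H_0$. This reduces the problem to the unconstrained functional $\Phi(v,\tau)=\frac12\norm{v}_H^2+\frac12 s(v,\tau)^2$. The identity $D_vs(0,T_c)=0$ gives $D^2_{vv}\Phi(0,T_c)=I_{H_0}$, hence uniform strict convexity nearby and a unique critical point $v(\tau)$. The competitor $c+s(0,\tau)e$ then forces every global minimizer into the chart. The expansion follows from $\norm{D_v\Phi(0,\tau)}=O(|\tau-T_c|^2)$ together with uniform invertibility of the Hessian.

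\textbf{Your route.} You apply the implicit function theorem once, to the Lagrange system $(k-\mu\nabla F(c+k),\,F(c+k)-T)$ on $H\times\R$. Invertibility of the bordered operator $(\dot k,\dot\mu)\mapsto(\dot k-\dot\mu g,\langle g,\dot k\rangle_H)$ plays the role of the paper's Hessian identity. You handle the constraint with an extra step: the multiplier bound $|\mu|\le\norm{k}_H/\norm{\nabla F(c+k)}_H$ places every global minimizer inside the uniqueness neighborhood.

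\textbf{What each buys.} Your argument is shorter and avoids both the graph chart and the convexity estimate. It also yields the multiplier asymptotics $\mu(T)=(T-T_c)/\norm{g}_H^2+O(|T-T_c|^2)$, which connects directly to the Euler--Lagrange representation of Theorem \ref{thm:single-critical}. The paper's argument exhibits an explicitly coercive second variation on the tangent space. Minimality of the selected point is thus proved intrinsically there, whereas in your argument it is inherited from the existence theorem.

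\textbf{Two small remarks.} First, the localization needs only the competitor bound $d_c(T^*)\le C|T^*-T_c|$. This bound is uniform over all global minimizers, since they all realize $d_c(T^*)$, so you do not need the full profile convergence of Theorem \ref{thm:banach-first-order}. Second, for the $O(|T^*-T_c|^2)$ term you should use your Taylor argument on the Lagrange equations, which needs only Assumption \ref{ass:C2}. Your first plan would require $D^2F$ to be locally Lipschitz, which is not assumed.
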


\begin{proof}
Since $g$ is the $H$-gradient of $F$ at $c$ and $g\neq0$, set
\begin{equation*}
        e=\frac{g}{\norm{g}_H},
        \qquad
        H_0=\{v\in H:\ip{v}{e}_H=0\}.
\end{equation*}
Then
\begin{equation*}
        H=\operatorname{span}\{e\}\oplus H_0.
\end{equation*}
Thus every $q$ sufficiently close to $c$ can be written uniquely as
\begin{equation*}
        q=c+v+se,
        \qquad
        v\in H_0,\quad s\in\mathbb R.
\end{equation*}

Define
\begin{equation*}
        \Psi(v,s)=F(c+v+se).
\end{equation*}
By Assumption \ref{ass:C2}, the map $F$ is of class $C^2$ near $c$ in $H$.
Hence $\Psi$ is of class $C^2$ near $(0,0)$ in $H_0\times\mathbb R$.  Moreover,
using the definition of the $H$-gradient,
\begin{equation*}
        \partial_s\Psi(0,0)
        =
        DF(c)[e]
        =
        \ip{g}{e}_H
        =
        \norm{g}_H
        >
        0.
\end{equation*}
Therefore the implicit function theorem yields a $C^2$ function
\begin{equation*}
        s=s(v,\tau)
\end{equation*}
defined for $(v,\tau)$ near $(0,T_c)$ such that
\begin{equation}\label{eq:local-level-chart}
        F(c+v+s(v,\tau)e)=\tau .
\end{equation}
Consequently, the part of the level set $\{F=\tau\}$ near $c$ is parametrized by
\begin{equation*}
        q=c+v+s(v,\tau)e,
        \qquad
        v\in H_0.
\end{equation*}

We record the first derivatives of $s$ at the base point.  Since $F(c)=T_c$,
one has
\begin{equation*}
        s(0,T_c)=0.
\end{equation*}
Differentiating \eqref{eq:local-level-chart} with respect to $\tau$ at
$(v,\tau)=(0,T_c)$ gives
\begin{equation*}
        DF(c)[e]\partial_\tau s(0,T_c)=1.
\end{equation*}
Thus
\begin{equation*}
        \partial_\tau s(0,T_c)=\frac1{\norm{g}_H}.
\end{equation*}
Similarly, differentiating \eqref{eq:local-level-chart} with respect to $v$
gives, for every $h\in H_0$,
\begin{equation*}
        DF(c)[h]+DF(c)[e]D_vs(0,T_c)[h]=0.
\end{equation*}
Since
\begin{equation*}
        DF(c)[h]
        =
        \ip{g}{h}_H
        =
        \norm{g}_H\ip{e}{h}_H
        =
        0,
        \qquad h\in H_0,
\end{equation*}
we obtain
\begin{equation}\label{eq:s-derivatives}
        s(0,T_c)=0,
        \qquad
        \partial_\tau s(0,T_c)=\frac1{\norm{g}_H},
        \qquad
        D_vs(0,T_c)=0.
\end{equation}

On the local level set, the objective functional in
\eqref{eq:hilbert-benchmark-problem} becomes
\begin{equation*}
        \Phi(v,\tau)
        =
        \frac12\norm{v+s(v,\tau)e}_H^2.
\end{equation*}
Since $v\in H_0$, $e\perp H_0$, and $\norm{e}_H=1$, this is
\begin{equation}\label{eq:Phi-def}
        \Phi(v,\tau)
        =
        \frac12\norm{v}_H^2
        +
        \frac12s(v,\tau)^2.
\end{equation}
Using \eqref{eq:s-derivatives}, we get, for every $h,k\in H_0$,
\begin{equation*}
        D^2_{vv}\Phi(0,T_c)[h,k]
        =
        \ip{h}{k}_H.
\end{equation*}
Equivalently,
\begin{equation}\label{eq:hessian-identity}
        D^2_{vv}\Phi(0,T_c)=I_{H_0}.
\end{equation}

By the $C^2$ regularity of $\Phi$ and \eqref{eq:hessian-identity}, after
shrinking the neighborhood if necessary, there exists $\alpha>0$ such that
\begin{equation}\label{eq:uniform-convexity}
        D^2_{vv}\Phi(v,\tau)[h,h]
        \ge
        \alpha\norm{h}_H^2
\end{equation}
for all admissible $(v,\tau)$ in the local chart and all $h\in H_0$.  Hence
$v\mapsto\Phi(v,\tau)$ is uniformly strictly convex for $\tau$ close to $T_c$.

Moreover,
\begin{equation*}
        D_v\Phi(0,T_c)=0,
        \qquad
        D_v(\nabla_v\Phi)(0,T_c)=I_{H_0}.
\end{equation*}
Thus, by the implicit function theorem applied to
\begin{equation*}
        \nabla_v\Phi(v,\tau)=0,
\end{equation*}
there exists a unique $C^1$ map $\tau\mapsto v(\tau)$ near $T_c$ such that
\begin{equation*}
        \nabla_v\Phi(v(\tau),\tau)=0.
\end{equation*}
By \eqref{eq:uniform-convexity}, this critical point is the unique local
minimizer of $\Phi(\cdot,\tau)$ in the local chart.  Set
\begin{equation*}
        q_\tau=c+v(\tau)+s(v(\tau),\tau)e.
\end{equation*}
Then $q_\tau$ is the unique local minimizer of
\eqref{eq:hilbert-benchmark-problem} on the level set $F(q)=\tau$ near $c$.

We next show that this local minimizer is actually the unique global minimizer
for $\tau$ close to $T_c$.  By Theorem \ref{thm:existence}, the constrained
problem has at least one global minimizer.  Suppose, to the contrary, that there
exist $\tau_j\to T_c$ and global minimizers $\widetilde q_j\neq q_{\tau_j}$
such that
\begin{equation*}
        F(\widetilde q_j)=\tau_j.
\end{equation*}
Since $c+s(0,\tau_j)e$ is an admissible competitor by
\eqref{eq:local-level-chart}, we have
\begin{equation*}
        \norm{\widetilde q_j-c}_H
        \le
        |s(0,\tau_j)|.
\end{equation*}
From \eqref{eq:s-derivatives} and Taylor's formula,
\begin{equation*}
        s(0,\tau_j)
        =
        \frac{\tau_j-T_c}{\norm{g}_H}
        +
        O(|\tau_j-T_c|^2),
\end{equation*}
and hence
\begin{equation*}
        \norm{\widetilde q_j-c}_H
        =
        O(|\tau_j-T_c|).
\end{equation*}
Therefore $\widetilde q_j\to c$ in $H$.  For all sufficiently large $j$,
$\widetilde q_j$ belongs to the same local chart, where the local minimizer is
unique.  Thus $\widetilde q_j=q_{\tau_j}$, a contradiction.  Hence the minimizer
is globally unique for $|\tau-T_c|$ small.

It remains to derive the expansion.  Taylor's formula and
\eqref{eq:s-derivatives} give
\begin{equation}\label{eq:s0-expansion}
        s(0,\tau)
        =
        \frac{\tau-T_c}{\norm{g}_H}
        +
        O(|\tau-T_c|^2).
\end{equation}
We now estimate $v(\tau)$.  From \eqref{eq:Phi-def},
\begin{equation*}
        D_v\Phi(0,\tau)[h]
        =
        s(0,\tau)D_vs(0,\tau)[h],
        \qquad h\in H_0.
\end{equation*}
Since $D_vs(0,T_c)=0$ and $s$ is $C^2$,
\begin{equation*}
        D_vs(0,\tau)=O(|\tau-T_c|).
\end{equation*}
Together with \eqref{eq:s0-expansion}, this yields
\begin{equation*}
        \norm{D_v\Phi(0,\tau)}_{H_0^*}
        =
        O(|\tau-T_c|^2).
\end{equation*}
Since $v(\tau)$ satisfies
\begin{equation*}
        D_v\Phi(v(\tau),\tau)=0
\end{equation*}
and $D^2_{vv}\Phi$ is uniformly invertible by \eqref{eq:uniform-convexity}, the
mean value theorem gives
\begin{equation}\label{eq:v-expansion}
        \norm{v(\tau)}_H
        =
        O(|\tau-T_c|^2).
\end{equation}
Consequently, using \eqref{eq:s0-expansion}, \eqref{eq:v-expansion}, and the
$C^1$ regularity of $s$, we obtain
\begin{equation*}
        s(v(\tau),\tau)
        =
        s(0,\tau)
        +
        O(\norm{v(\tau)}_H)
        =
        \frac{\tau-T_c}{\norm{g}_H}
        +
        O(|\tau-T_c|^2).
\end{equation*}
Therefore
\begin{equation*}
\begin{aligned}
        q_\tau-c
        &=
        v(\tau)+s(v(\tau),\tau)e        \\
        &=
        \frac{\tau-T_c}{\norm{g}_H}e
        +
        O(|\tau-T_c|^2)                  \\
        &=
        \frac{\tau-T_c}{\norm{g}_H^2}g
        +
        O(|\tau-T_c|^2).
\end{aligned}
\end{equation*}
Taking $\tau=T^*$ gives \eqref{eq:near-expansion}.  The proof is complete.
\end{proof}

\section{Spectral matching reduction and the remaining global rigidity problem}\label{sec:spectral-matching}

We remain in the Hilbert setting when spectral matching is used to analyze the
remaining global rigidity problem.  The near-benchmark theorem gives a
perturbative uniqueness result.  For prescribed nodes far from $T_c$, one should
not claim uniqueness without additional global information.  What can be proved
cleanly is an exact variational reduction.  We first state the one-node
reduction used in the benchmark problem and then record the corresponding
finite-vector version.

Fix $T\in(0,R)$.  For a potential $q_L$ on $(0,T)$, let $\lambda_L(q_L;T)$ be the
principal eigenvalue of
\[
        -w^{-1}(wy')'+q_Ly=\lambda y,
        \qquad y'(0)=0,
        \qquad y(T)=0.
\]
For a potential $q_R$ on $(T,R)$, let $\lambda_R(q_R;T)$ be the principal
eigenvalue of
\[
        -w^{-1}(wy')'+q_Ry=\lambda y,
        \qquad y(T)=0,
        \qquad y(R)=0.
\]

\begin{lemma}\label{lem:one-sided-extremals}
For every $\tau\in\R$, the one-sided minimization problems
\[
D_L(\tau;T)=\inf\{\norm{q_L-c}_{L^2_w(0,T)}:
        \lambda_L(q_L;T)=\tau\}
\]
and
\[
D_R(\tau;T)=\inf\{\norm{q_R-c}_{L^2_w(T,R)}:
        \lambda_R(q_R;T)=\tau\}
\]
are nonempty and attain their infima.
\end{lemma}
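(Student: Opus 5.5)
Nonemptiness comes from constant shifts, and attainment from the direct method; both use only the one-interval analogues of the results already proved on $(0,R)$.

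\emph{Nonemptiness.} Let $\mu_L(T)$ be the principal eigenvalue of $-w^{-1}(wy')'$ on $(0,T)$ with $y'(0)=0$, $y(T)=0$, and $\mu_R(T)$ the Dirichlet principal eigenvalue on $(T,R)$. By the shift argument of Theorem~\ref{thm:global-nonunique}, $\lambda_L(q_L+a;T)=\lambda_L(q_L;T)+a$. Hence the constant potential $q_L\equiv\tau-\mu_L(T)$ satisfies $\lambda_L(q_L;T)=\tau$, and similarly on $(T,R)$. So both admissible sets are nonempty for every $\tau\in\R$, and the infima are finite.

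\emph{Attainment.} Take a minimizing sequence $q_k$ for $D_L$. It is bounded in $L^2_w(0,T)$, so after a subsequence $q_k\rightharpoonup\hat q$ weakly in $L^2_w(0,T)$. Weak lower semicontinuity of the norm gives $\norm{\hat q-c}\le D_L(\tau;T)$. It remains to show $\lambda_L(\hat q;T)=\tau$, i.e.\ that $q\mapsto\lambda_L(q;T)$ is weakly sequentially continuous on bounded sets. I would repeat the proof of Lemma~\ref{lem:weak-continuity} with $m=1$ on the truncated interval:
\begin{itemize}
\item For $(0,T)$, the form domain is the radial representative of $H^1_0(B_T)$, so Lemma~\ref{lem:form-bounded} applies verbatim with $R$ replaced by $T$.
\item For $(T,R)$, both endpoints are regular and $w$ is bounded above and below. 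The form domain is $H^1_0(T,R)$, which embeds compactly into $L^\infty$, so the form bound and compactness are even easier.
\end{itemize}

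\emph{Exponent condition.} The potential is only in $L^2_w$, so the argument needs $p=2>n/2$, i.e.\ $n\le3$. I would therefore state that we are in the Hilbert regime \eqref{eq:hilbert-regime}. This is the point to check: the uniform infinitesimal form bound for a bounded sequence in $L^2_w$ uses $2p^\dagger=4$, which is subcritical for $H^1_0(B_T)$ precisely when $n\le3$.

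\emph{Limit argument.} The form bound gives uniformly bounded eigenvalues and normalized eigenfunctions bounded in $H^1$. Compact embedding into $L^{4}_w$ lets us pass to the limit in the weak eigenvalue equation. The limit $E$ is a nonzero eigenfunction with eigenvalue $\mu=\lim\lambda_L(q_k;T)=\tau$.

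\emph{Identifying the limit as the principal eigenvalue.} This is the main obstacle: one must rule out that $\mu$ is a higher eigenvalue of $\hat q$. For $m=1$ this is simpler than the general case.
\begin{itemize}
\item $\limsup\lambda_L(q_k)\le\lambda_L(\hat q)$: test the Rayleigh quotient with the fixed principal eigenfunction of $\hat q$ and use weak convergence of $q_k$ against its square, which lies in $L^2_w$ by the embedding.
\item $\liminf\lambda_L(q_k)\ge\lambda_L(\hat q)$: the Rayleigh quotient of $\hat q$ is bounded by $\mu$ at the limit $E$. This follows from weak lower semicontinuity of the Dirichlet integral plus convergence of the potential term.
\end{itemize}
Alternatively, the principal eigenfunctions $E_k$ can be taken positive, so $E\ge0$ and $E\not\equiv0$; the positive eigenfunction of a Sturm--Liouville problem is the principal one. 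Hence $\lambda_L(\hat q;T)=\tau$ and $\hat q$ is a minimizer. The case of $D_R$ is identical.
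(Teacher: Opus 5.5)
Your proposal is correct and follows the same route as the paper: a constant shift gives nonemptiness, and the direct method together with the $m=1$ case of Lemma~\ref{lem:weak-continuity} on the one-sided interval gives attainment. Your additions are details the paper's one-line proof leaves implicit: you state that the argument runs in the Hilbert regime $p=2$, $n\in\{2,3\}$, where subcriticality of $L^4$ is what makes the form bound uniform on bounded sets, and you give a two-sided Rayleigh-quotient argument identifying the limit as the principal eigenvalue.
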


\begin{proof}
Non-emptiness follows by adding a constant to the potential $c$ on the
corresponding interval; this shifts the principal eigenvalue by the same
constant.  A minimizing sequence is bounded in the relevant $L^2_w$ space and
therefore has a weakly convergent subsequence.  The weak continuity of the
principal eigenvalue is proved exactly as in Lemma \ref{lem:weak-continuity},
restricted to the one-sided interval.  Weak lower semicontinuity of the norm
gives attainment.
\end{proof}

We also record the finite-vector version of the matching principle.  This result
is not needed for the perturbative benchmark theorem below, but it clarifies the
finite-nodal nature of the reduction and strengthens the link between the
constraint geometry and the spectral matching mechanism.

\begin{theorem}\label{thm:multi-node-matching}
Assume $p=2$ and fix an admissible nodal vector
$\tau=(T_1,\ldots,T_{m-1})\in\Omega_m$.  Put $T_0=0$, $T_m=R$, and
$I_k=(T_{k-1},T_k)$ for $k=1,\ldots,m$.  For a potential $q_k$ on $I_k$, let
$\lambda_k(q_k;I_k)$ be the principal eigenvalue of the weighted radial operator
on $I_k$, with the radial regularity condition at $0$ when $k=1$ and Dirichlet
conditions at all nodal interfaces and at $R$.
Then, for $q\in L^2_w(0,R)$,
\begin{equation}\label{eq:multi-matching-condition}
        T_m(q)=\tau
        \quad\Longleftrightarrow\quad
        \lambda_1(q|_{I_1};I_1)=\cdots=
        \lambda_m(q|_{I_m};I_m).
\end{equation}
Consequently, in the constant benchmark case $q_0\equiv c$, if
\[
        D_k(\sigma;I_k)
        =\inf\{\norm{q_k-c}_{L^2_w(I_k)}:
        \lambda_k(q_k;I_k)=\sigma\},
\]
then the squared optimal distance for the prescribed finite nodal vector is
\begin{equation}\label{eq:multi-matching-functional}
        d(\tau)^2
        =\inf_{\sigma\in\R}\sum_{k=1}^{m}D_k(\sigma;I_k)^2 .
\end{equation}
\end{theorem}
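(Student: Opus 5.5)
The equivalence \eqref{eq:multi-matching-condition} will be proved by restriction in one direction and gluing plus oscillation counting in the other. The distance formula \eqref{eq:multi-matching-functional} then follows by decoupling the objective over the intervals $I_k$ at a fixed common eigenvalue $\sigma$.

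\emph{Forward direction.} Assume $T_m(q)=\tau$ and let $E=E_m(\cdot;q)$. By Proposition \ref{prop:radial-oscillation}, $E$ has exactly the simple zeros $T_1,\ldots,T_{m-1}$ and no others in $(0,R)$. Hence $E$ has constant sign on each $I_k$. Its restriction to $I_k$ solves the weighted equation with eigenvalue $\lambda_m(q)$ and satisfies the boundary conditions of the $k$-th subproblem: radial regularity at $0$ for $k=1$, and $E(T_k)=0$, $E(R)=0$. A nonvanishing eigenfunction on $I_k$ must be the principal one, since eigenfunctions orthogonal to the positive principal eigenfunction change sign. Therefore $\lambda_k(q|_{I_k};I_k)=\lambda_m(q)$ for every $k$.

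\emph{Converse direction.} Assume all principal eigenvalues equal a common value $\sigma$, with positive principal eigenfunctions $\phi_k$ on $I_k$. I would repeat the gluing construction of Theorem \ref{thm:nonempty-full} verbatim. The boundary point lemma gives $\phi_k'(T_k^-)<0<\phi_{k+1}'(T_k^+)$, so the fluxes $w\phi'$ can be matched with alternating signs. This produces a weak solution $y\in H^1_{w,R}(0,R)$ of the full problem with eigenvalue $\sigma$ and exactly the interior zeros $T_1,\ldots,T_{m-1}$. The oscillation count of Proposition \ref{prop:radial-oscillation} together with simplicity then gives $\sigma=\lambda_m(q)$ and $T_m(q)=\tau$.

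The technical point in this step is regularity for $q\in L^2_w$ rather than $L^\infty$. One needs the boundary point lemma, i.e.\ $\phi_k'\ne0$ at the endpoints. I would obtain it from local $W^{2,p}$ regularity near each interior endpoint, as in Lemma \ref{lem:weak-continuity}, combined with uniqueness for the initial value problem: if $\phi_k(T)=\phi_k'(T)=0$, then $\phi_k\equiv0$.

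\emph{Distance formula.} Since $\|q-c\|_{L^2_w(0,R)}^2=\sum_k\|q|_{I_k}-c\|_{L^2_w(I_k)}^2$, the equivalence shows that the admissible set is the union over $\sigma\in\R$ of the product sets $\prod_k\{q_k:\lambda_k(q_k;I_k)=\sigma\}$. Minimizing over such a product separates into independent minimizations, which gives $\sum_k D_k(\sigma;I_k)^2$. Taking the infimum over $\sigma$ yields \eqref{eq:multi-matching-functional}.

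Two facts are needed here. Each $D_k$ is finite and attained, by the argument of Lemma \ref{lem:one-sided-extremals} applied to $I_k$. Conversely, any family $(q_k)$ with $q_k\in L^2_w(I_k)$ glues to an element of $L^2_w(0,R)$, so all products are realized. The main obstacle, I expect, is the careful justification of the principal-eigenfunction sign characterization and of the endpoint derivative nonvanishing in the singular weighted, low-regularity setting. For the interval $I_1$ touching the origin, I would use the form-domain compactness of Lemma \ref{lem:form-bounded} restricted to $(0,T_1)$.
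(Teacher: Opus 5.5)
Your proposal is correct and follows essentially the same route as the paper. For the forward direction you restrict $E_m$ to each $I_k$ as a fixed-sign, hence principal, eigenfunction; for the converse you use alternating-sign flux gluing as in Theorem \ref{thm:nonempty-full} plus the oscillation count; and you obtain \eqref{eq:multi-matching-functional} by splitting $\|q-c\|^2$ over the intervals at a fixed common value $\sigma$. Your extra justifications only fill in details the paper leaves implicit: nonvanishing endpoint derivatives via ODE uniqueness at the interior interfaces, and the fact that the infimum over a product of nonempty constraint sets is the sum of the separate infima, so attainment of $D_k$ is not actually needed.
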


\begin{proof}
Suppose first that $T_m(q)=\tau$ and let $E$ be the $m$-th radial eigenfunction.
On each interval $I_k$ the restriction of $E$ has a fixed sign and satisfies the
same eigenvalue equation with zero boundary values at the endpoints of $I_k$
(and the radial regularity condition at $0$ on the first interval).  Hence each
restriction is a principal eigenfunction on $I_k$, and all the one-sided
principal eigenvalues are equal to the common value $\lambda_m(q)$.

Conversely, suppose that the principal eigenvalues on all intervals are equal to
some $\lambda$.  Let $\phi_k>0$ be the principal eigenfunction on $I_k$.  Since
the endpoint derivatives at the nodal interfaces have alternating signs, we can
choose positive constants $a_k$ recursively so that the function
\[
        E(r)=(-1)^{k-1}a_k\phi_k(r),\qquad r\in I_k,
\]
is continuous at each interface and has matching weighted flux $(wE')$ there.
Then $E\in H^1_{w,R}(0,R)$ and satisfies the global weak eigenvalue equation
with eigenvalue $\lambda$.  It has exactly the zeros $T_1,\ldots,T_{m-1}$ and no
other interior zeros.  By the radial oscillation theorem it is the $m$-th radial
eigenfunction, so $T_m(q)=\tau$.

The distance formula follows exactly from this equivalence.  Any admissible
potential gives a common matching value $\sigma$ and hence has squared distance
at least $\sum_kD_k(\sigma;I_k)^2$.  Conversely, for each $\sigma$, one-sided
minimizers can be glued to form an admissible potential by the matching
condition above.  Taking the infimum over $\sigma$ gives
\eqref{eq:multi-matching-functional}.
\end{proof}

\begin{proposition}\label{prop:spectral-matching}
Let $q\in H$ and write $q_L=q|_{(0,T)}$, $q_R=q|_{(T,R)}$.  Then, for
$m=2$ and $i=1$,
\begin{equation}\label{eq:matching-condition}
        T_{1,2}(q)=T
        \quad\Longleftrightarrow\quad
        \lambda_L(q_L;T)=\lambda_R(q_R;T).
\end{equation}
Consequently, in the benchmark case $q_0\equiv c$, the squared optimal distance
for the prescribed node $T$ is
\begin{equation}\label{eq:matching-functional}
        d(T)^2=\inf_{\tau\in\R}\big(D_L(\tau;T)^2+D_R(\tau;T)^2\big).
\end{equation}
\end{proposition}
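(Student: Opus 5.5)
This is the case $m=2$ of Theorem~\ref{thm:multi-node-matching}, with $I_1=(0,T)$ and $I_2=(T,R)$. I would still write out the two directions of the equivalence directly, since the one-node case is short. The distance formula then follows by decomposing the squared $L^2_w$ norm over the two subintervals.

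\emph{Forward direction.} Assume $T_{1,2}(q)=T$ and let $E=E_2(\cdot;q)$. By Proposition~\ref{prop:radial-oscillation}, $E$ has exactly one simple interior zero, namely $T$. Hence $E$ has a fixed sign on $(0,T)$ and on $(T,R)$. On $(0,T)$ the restriction satisfies the weighted equation with $\lambda_2(q)$, the radial regularity condition at $0$, and $E(T)=0$. A sign-definite eigenfunction of the one-sided problem is its principal eigenfunction; this uses the oscillation count of Proposition~\ref{prop:radial-oscillation} applied on $(0,T)$. So $\lambda_L(q_L;T)=\lambda_2(q)$. The same argument on the regular Dirichlet interval $(T,R)$ gives $\lambda_R(q_R;T)=\lambda_2(q)$.

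\emph{Converse direction.} Assume $\lambda_L(q_L;T)=\lambda_R(q_R;T)=:\lambda$. Take positive principal eigenfunctions $\phi_L$ and $\phi_R$. The boundary point lemma gives $\phi_L'(T^-)<0<\phi_R'(T^+)$. Glue them as $E=\phi_L$ on $(0,T)$ and $E=-a\phi_R$ on $(T,R)$, with $a=-\phi_L'(T^-)/\phi_R'(T^+)>0$, so that the weighted flux $wE'$ is continuous at $T$. This is exactly the gluing in the proof of Theorem~\ref{thm:nonempty-full}. Integrating by parts on each subinterval shows that $E\in H^1_{w,R}(0,R)$ is a weak eigenfunction with eigenvalue $\lambda$ whose only interior zero is $T$. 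By Proposition~\ref{prop:radial-oscillation} and simplicity, $\lambda=\lambda_2(q)$ and $E$ is a multiple of $E_2(\cdot;q)$, so $T_{1,2}(q)=T$.

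\emph{Distance formula.} For $q_0\equiv c$ we have
\[
\norm{q-c}_H^2=\norm{q_L-c}^2_{L^2_w(0,T)}+\norm{q_R-c}^2_{L^2_w(T,R)}.
\]
\begin{itemize}
\item Lower bound: if $q\in\Sset(T)$, the equivalence gives a common value $\tau=\lambda_L=\lambda_R$. Then $\norm{q-c}_H^2\ge D_L(\tau;T)^2+D_R(\tau;T)^2$, so $d(T)^2$ is at least the infimum over $\tau$.
\item Upper bound: fix any $\tau$. Lemma~\ref{lem:one-sided-extremals} provides one-sided minimizers $q_L^\tau$ and $q_R^\tau$. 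Their concatenation lies in $L^2_w(0,R)$, belongs to $\Sset(T)$ by the converse direction, and has squared distance $D_L^2+D_R^2$. Taking the infimum over $\tau$ gives the reverse inequality.
\end{itemize}

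\emph{Main obstacle.} The delicate step is the forward direction: identifying a sign-definite one-sided eigenfunction as principal. On $(0,T)$ this needs the singular endpoint at $0$ handled as in Proposition~\ref{prop:radial-oscillation}. Concretely, the one-sided form on $H^1_w$ functions vanishing at $T$ has compact resolvent (Lemma~\ref{lem:form-bounded} restricted to the ball $B_T$), and the ground state is positive and simple. Then $E|_{(0,T)}$, being positive, cannot be orthogonal in $L^2_w(0,T)$ to the positive ground state. Both are eigenfunctions of the same self-adjoint operator, so their eigenvalues must coincide.
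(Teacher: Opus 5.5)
Your proposal is correct and follows the paper's proof essentially step for step. The forward direction uses the same restriction argument. The converse uses the same flux-matched gluing $E=\phi_L$ on $(0,T)$ and $E=-a\phi_R$ on $(T,R)$, with $a=-\phi_L'(T^-)/\phi_R'(T^+)$. The distance formula \eqref{eq:matching-functional} uses the same lower/upper bound argument through Lemma~\ref{lem:one-sided-extremals}. Your orthogonality argument (a positive one-sided eigenfunction cannot be orthogonal to the positive ground state) fills in a step the paper only asserts, namely that a sign-definite restriction of $E_2(\cdot;q)$ is the principal eigenfunction.
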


\begin{proof}
Assume first that $T_{1,2}(q)=T$.  Let $E$ be the second radial eigenfunction.
Then $E$ has exactly one zero, at $T$, and has a fixed sign on each side of $T$.
Its restrictions to $(0,T)$ and $(T,R)$ are therefore principal eigenfunctions
for the corresponding one-sided problems.
{Indeed, on $(0,T)$ the restriction of $E$ satisfies}
\[
        {
        -w^{-1}(wE')'+q_LE=\lambda_2(q)E,\qquad
        E'(0)=0,\qquad E(T)=0,
        }
\]
{and has no zero in $(0,T)$. Hence it is the principal eigenfunction of the
left problem. Similarly, the restriction of $E$ to $(T,R)$ satisfies}
\[
        {
        -w^{-1}(wE')'+q_RE=\lambda_2(q)E,\qquad
        E(T)=0,\qquad E(R)=0,
        }
\]
{and has no zero in $(T,R)$, so it is the principal eigenfunction of the
right problem.}
Since both restrictions come from the same global eigenvalue, the two principal
eigenvalues are equal.

Conversely, suppose that the two principal eigenvalues are equal.
{We denote their common value by $\lambda$.}
Let $\phi_L>0$ and $\phi_R>0$ be the corresponding principal eigenfunctions on the
left and right intervals.  Their boundary derivatives at $T$ are nonzero.
{More precisely, by the boundary point lemma for the one-dimensional
weighted Sturm--Liouville problems,}
\[
       {
        \phi_L'(T^-)<0,\qquad \phi_R'(T^+)>0.
        }
\]
Since $w(T)>0$, we may choose $a>0$ so that the weighted fluxes match after changing
sign on the right:
\[
        w(T)\phi_L'(T^-)= -a w(T)\phi_R'(T^+).
\]
{Equivalently,}
\[
       {
        a=-\frac{\phi_L'(T^-)}{\phi_R'(T^+)}>0.
        }
\]
Then
\[
E(r)=
\begin{cases}
\phi_L(r),&0<r<T,\\
-a\phi_R(r),&T<r<R,
\end{cases}
\]
{is continuous at $T$, belongs to $H^1_{w,R}(0,R)$, and satisfies}
\[
       {
        (wE')(T^-)=(wE')(T^+).
        }
\]
{Consequently no singular boundary measure is produced at the interface
when differentiating $wE'$. More explicitly, for every test function
$\psi\in H^1_{w,R}(0,R)$, integration by parts on $(0,T)$ and $(T,R)$ gives}
\[
{
\begin{aligned}
\int_0^R E'\psi' w\,\dd r+\int_0^R qE\psi w\,\dd r
&=
\int_0^T \phi_L'\psi' w\,\dd r+\int_T^R (-a\phi_R')\psi' w\,\dd r       \\
&\quad+
\int_0^T q_L\phi_L\psi w\,\dd r+\int_T^R q_R(-a\phi_R)\psi w\,\dd r     \\
&=
\lambda\int_0^R E\psi w\,\dd r,
\end{aligned}
}
\]
{because the interior boundary terms at $T$ cancel by the flux matching
condition, while the endpoint terms are compatible with the radial regularity
condition at $0$ and the Dirichlet condition at $R$. Thus $E$ is a global weak
eigenfunction associated with the eigenvalue $\lambda$.}
It has exactly one interior zero, namely $T$.  By the radial oscillation theorem
it is the second radial eigenfunction, and hence $T_{1,2}(q)=T$.
{It remains to prove \eqref{eq:matching-functional}. Let}
\[
        {
        d(T):=\inf\bigl\{\|q-c\|_H:\,F(q)=T\bigr\}.
        }
\]
{For any admissible $q$ with $F(q)=T$, the equivalence just proved implies
that}
\[
        {
        \lambda_L(q_L;T)=\lambda_R(q_R;T)=:\tau.
        }
\]
{Therefore, by the definitions of $D_L$ and $D_R$,}
\[
{
\begin{aligned}
\|q-c\|_H^2
&=
\|q_L-c\|_{L^2_w(0,T)}^2
+
\|q_R-c\|_{L^2_w(T,R)}^2                                      \\
&\ge
D_L(\tau;T)^2+D_R(\tau;T)^2                                    \\
&\ge
\inf_{\sigma\in\R}
\bigl(D_L(\sigma;T)^2+D_R(\sigma;T)^2\bigr).
\end{aligned}
}
\]
{Taking the infimum over all admissible $q$ gives}
\[
        {
        d(T)^2\ge
        \inf_{\sigma\in\R}
        \bigl(D_L(\sigma;T)^2+D_R(\sigma;T)^2\bigr).
        }
\]
{Conversely, fix $\tau\in\R$. By Lemma \ref{lem:one-sided-extremals}, there
exist one-sided minimizers $q_L^\tau$ and $q_R^\tau$ such that}
\[
       {
        \lambda_L(q_L^\tau;T)=\lambda_R(q_R^\tau;T)=\tau,
        }
\]
{and}
\[
        {
        \|q_L^\tau-c\|_{L^2_w(0,T)}=D_L(\tau;T),\qquad
        \|q_R^\tau-c\|_{L^2_w(T,R)}=D_R(\tau;T).
        }
\]
{Define the piecewise potential}
\[
       {
        q^\tau(r)=
        \begin{cases}
        q_L^\tau(r),&0<r<T,\\
        q_R^\tau(r),&T<r<R.
        \end{cases}
        }
\]
{The matching condition \eqref{eq:matching-condition} holds for $q^\tau$.
Hence, by the first part of the proposition, $F(q^\tau)=T$. Consequently,}
\[
      {
        d(T)^2
        \le
        \|q^\tau-c\|_H^2
        =
        D_L(\tau;T)^2+D_R(\tau;T)^2.
        }
\]
{Taking the infimum over $\tau\in\R$ gives the reverse inequality. Therefore}
\[
        {
        d(T)^2
        =
        \inf_{\tau\in\R}
        \bigl(D_L(\tau;T)^2+D_R(\tau;T)^2\bigr),
        }
\]
{which proves \eqref{eq:matching-functional}.}
\end{proof}

\begin{theorem}\label{thm:near-matching-unique}
Assume that $p=2$ and $n\in\{2,3\}$.  Let $q_0\equiv c$ and
$T_c=T_{1,2}(c)$.  Suppose that the nodal map $F(q)=T_{1,2}(q)$ is of class
$C^2$ in a neighborhood of $c$ in $L^2_w(0,R)$.  Then, after possibly decreasing
the constant $\eta$ in Theorem \ref{thm:near-global-unique}, the following holds.
For every $T\in(0,R)$ satisfying
\[
        0<|T-T_c|<\eta,
\]
the spectral matching functional
\[
        M_T(\tau)=D_L(\tau;T)^2+D_R(\tau;T)^2
\]
has a unique global minimizer.

Moreover, the matched potential associated with this minimizing value is
precisely the unique global minimizer of the Hilbert benchmark inverse nodal
optimization problem.
\end{theorem}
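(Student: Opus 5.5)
We deduce the statement from Theorem \ref{thm:near-global-unique} and the exact reduction in Proposition \ref{prop:spectral-matching}, so that uniqueness of the matching minimizer is inherited from uniqueness of the benchmark optimizer. Let $\eta$ be as in Theorem \ref{thm:near-global-unique}, fix $T$ with $0<|T-T_c|<\eta$, and let $q_T$ be the unique global minimizer of \eqref{eq:hilbert-benchmark-problem}. Set $\tau_T=\lambda_L(q_T|_{(0,T)};T)=\lambda_R(q_T|_{(T,R)};T)$, which are equal by \eqref{eq:matching-condition}.

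\emph{Existence.} Since $q_T$ is admissible for the one-sided problems at level $\tau_T$, we have $D_L(\tau_T;T)^2+D_R(\tau_T;T)^2\le\|q_T-c\|_H^2=d(T)^2$. By \eqref{eq:matching-functional}, $d(T)^2=\inf_\tau M_T(\tau)$. Hence $\tau_T$ attains the infimum of $M_T$. This also shows that each restriction of $q_T$ is a one-sided extremal at level $\tau_T$.

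\emph{Uniqueness.} Suppose $\sigma$ is another global minimizer of $M_T$. By Lemma \ref{lem:one-sided-extremals}, choose one-sided minimizers $q_L^\sigma$ and $q_R^\sigma$, and glue them into $q^\sigma$ as in the proof of Proposition \ref{prop:spectral-matching}. Then $F(q^\sigma)=T$ and $\|q^\sigma-c\|_H^2=M_T(\sigma)=d(T)^2$, so $q^\sigma$ is a global minimizer of the benchmark problem. By Theorem \ref{thm:near-global-unique}, $q^\sigma=q_T$. Therefore
\[
\sigma=\lambda_L(q^\sigma|_{(0,T)};T)=\lambda_L(q_T|_{(0,T)};T)=\tau_T .
\]
The same argument shows that every matched potential built from one-sided extremals at the minimizing level coincides with $q_T$. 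In particular, the one-sided extremals at level $\tau_T$ are themselves unique: any other choice would glue to a second benchmark minimizer. This yields the final identification claimed in the theorem.

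\emph{Main obstacle.} The delicate point is that the gluing and the inequality $M_T(\sigma)\ge d(T)^2$ must use the same admissible class $H=L^2_w$ as Theorem \ref{thm:near-global-unique}. The glued potential must lie in $L^2_w(0,R)$, and one-sided attainment at every level must hold. Both are guaranteed by Lemma \ref{lem:one-sided-extremals} and by the additivity of the squared norm over $(0,T)$ and $(T,R)$.

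We also check that the $C^2$ hypothesis stated in the theorem is exactly Assumption \ref{ass:C2}. With that identification, no further shrinking of $\eta$ beyond the one in Theorem \ref{thm:near-global-unique} should be needed. The phrase ``after possibly decreasing $\eta$'' is harmless.
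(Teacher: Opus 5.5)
Your proposal is correct and follows the paper's proof essentially step for step. The restrictions of $q_T$ are admissible at the common level $\tau_T$, so $M_T(\tau_T)\le \|q_T-c\|_H^2=d(T)^2=\inf_\tau M_T(\tau)$ by the reduction in Proposition~\ref{prop:spectral-matching}; and gluing one-sided extremals (Lemma~\ref{lem:one-sided-extremals}) at any second minimizing value $\sigma$ gives a second benchmark minimizer, which Theorem~\ref{thm:near-global-unique} forces to equal $q_T$, whence $\sigma=\tau_T$. Your extra remark that the one-sided extremals at $\tau_T$ are unique is a harmless sharpening (the paper gets it implicitly, since its glued $q_*$ uses an arbitrary choice of extremals), and it makes ``the matched potential'' unambiguous.
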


\begin{proof}
By Theorem \ref{thm:near-global-unique}, after the above reduction of $\eta$ if
necessary, the constrained Hilbert benchmark problem
\[
        \min\left\{\frac12\norm{q-c}_{2,w}^2:F(q)=T\right\}
\]
has a unique global minimizer.  Denote it by $q_T$.  Put
\[
        \tau_T=\lambda_L(q_T|_{(0,T)};T)=\lambda_R(q_T|_{(T,R)};T).
\]
The equality of the two principal eigenvalues follows from Proposition
\ref{prop:spectral-matching}.  Since the restrictions of $q_T$ are admissible in
the definitions of $D_L(\tau_T;T)$ and $D_R(\tau_T;T)$, we have
\[
        M_T(\tau_T)\le \norm{q_T-c}_{L^2_w(0,R)}^2.
\]
On the other hand, Proposition \ref{prop:spectral-matching} gives
\[
        \norm{q_T-c}_{L^2_w(0,R)}^2=d(T)^2=\inf_{\tau\in\R}M_T(\tau).
\]
Hence $M_T(\tau_T)=\inf_{\tau\in\R}M_T(\tau)$, so $\tau_T$ is a global minimizer
of $M_T$.  In particular the infimum in the matching problem is attained.

Suppose that $\tau_*$ is another global minimizer.  By Lemma
\ref{lem:one-sided-extremals}, choose one-sided extremals $q_L^*$ and $q_R^*$
which attain $D_L(\tau_*;T)$ and $D_R(\tau_*;T)$, respectively.  Define the
piecewise potential
\[
        q_*(r)=
        \begin{cases}
        q_L^*(r),&0<r<T,\\
        q_R^*(r),&T<r<R.
        \end{cases}
\]
Changing the value at the single point $T$, if desired, is immaterial; hence
$q_*\in L^2_w(0,R)$.  Since
$\lambda_L(q_L^*;T)=\lambda_R(q_R^*;T)=\tau_*$, Proposition
\ref{prop:spectral-matching} implies $T_{1,2}(q_*)=T$.  Moreover,
\[
        \norm{q_*-c}_{L^2_w(0,R)}^2
        =D_L(\tau_*;T)^2+D_R(\tau_*;T)^2
        =M_T(\tau_*)=d(T)^2.
\]
Thus $q_*$ is a global minimizer of the benchmark inverse nodal optimization
problem with node $T$.  By the uniqueness in Theorem
\ref{thm:near-global-unique}, $q_*=q_T$ almost everywhere.  Consequently their
left principal eigenvalues coincide, and hence
\[
        \tau_*=\lambda_L(q_L^*;T)=\lambda_L(q_T|_{(0,T)};T)=\tau_T.
\]
This proves uniqueness of the global minimizing matching value and also shows
that the matched potential associated with this value is the unique Hilbert
benchmark optimizer.
\end{proof}

\begin{corollary}\label{cor:matching-criterion}
Fix $T\in(0,R)$.  Suppose that, for every minimizing value of $\tau$ in
\eqref{eq:matching-functional}, the one-sided problems defining $D_L(\tau;T)$
and $D_R(\tau;T)$ have unique minimizers.  If the scalar function
\[
        \tau\mapsto D_L(\tau;T)^2+D_R(\tau;T)^2
\]
has a unique global minimizer, then the benchmark inverse nodal optimization
problem has a unique global minimizer for the node $T$.  If this scalar function
has at least two distinct global minimizers, or if one of the one-sided extremal
problems has two distinct minimizers at a minimizing value of $\tau$, then the
benchmark inverse nodal optimization problem has at least two distinct global
minimizers.
\end{corollary}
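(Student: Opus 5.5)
The argument rests on a bijection between global minimizers of the benchmark problem and pairs (minimizing value $\tau$, one-sided extremals at $\tau$), built from Proposition \ref{prop:spectral-matching} and Lemma \ref{lem:one-sided-extremals}.

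\emph{Step 1 (optimizers induce matching data).} Let $\hat q$ be a global minimizer; it exists by Theorem \ref{thm:existence}. Write $\hat\tau=\lambda_L(\hat q|_{(0,T)};T)=\lambda_R(\hat q|_{(T,R)};T)$, which is well defined by \eqref{eq:matching-condition}. Combining this with \eqref{eq:matching-functional} gives
\[
d(T)^2=\norm{\hat q-c}_H^2\ge D_L(\hat\tau;T)^2+D_R(\hat\tau;T)^2\ge d(T)^2 .
\]
Hence equality holds throughout. As a result, $\hat\tau$ minimizes $M_T$, and each restriction of $\hat q$ attains the corresponding one-sided infimum. If the restriction failed to attain it, the strict inequality on that side would break the chain.

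\emph{Step 2 (matching data induce optimizers).} Take any minimizing value $\tau_*$ of $M_T$ and one-sided extremals $q_L^*,q_R^*$ at $\tau_*$; these exist by Lemma \ref{lem:one-sided-extremals}. Glue them as in the proof of Theorem \ref{thm:near-matching-unique}. The resulting $q_*$ satisfies $F(q_*)=T$ by \eqref{eq:matching-condition}, and $\norm{q_*-c}_H^2=M_T(\tau_*)=d(T)^2$, so $q_*$ is a global minimizer.

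\emph{Step 3 (conclusions).} For uniqueness, let $\hat q_1,\hat q_2$ be two global minimizers. By Step 1 each yields a minimizing value of $M_T$, and by hypothesis that value is unique, so both equal some $\tau_0$. Also by Step 1, their restrictions are one-sided extremals at $\tau_0$. These extremals are unique by assumption, so $\hat q_1=\hat q_2$ a.e. on both sides, hence on $(0,R)$.

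For multiplicity, first suppose $\tau_1\ne\tau_2$ both minimize $M_T$. Step 2 produces glued minimizers $q^{(1)},q^{(2)}$ with left principal eigenvalues $\tau_1\ne\tau_2$. Since $\lambda_L$ depends only on the potential, $q^{(1)}\ne q^{(2)}$. Next suppose that at a minimizing $\tau$ one side, say the left, has two distinct extremals $q_L^{a}\ne q_L^{b}$. Glue each with a common right extremal; the two potentials differ on a set of positive measure in $(0,T)$, and both are global minimizers by Step 2.

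The main point requiring care is the equality chain in Step 1. It uses only the identity $\norm{q-c}_H^2=\norm{q_L-c}^2_{L^2_w(0,T)}+\norm{q_R-c}^2_{L^2_w(T,R)}$, which comes from the Hilbert ($p=2$) splitting. The extra hypothesis of unique one-sided minimizers is needed only in the uniqueness direction.
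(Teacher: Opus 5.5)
Your proof is correct and follows the paper's argument. The paper likewise uses Proposition \ref{prop:spectral-matching} to identify global minimizers with glued pairs of one-sided extremals at a minimizing matching value, and reads off uniqueness and multiplicity from that identification; your Step 1 equality chain and Step 2 gluing spell out the two directions of this correspondence, which the paper states in a single line.
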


\begin{proof}
By Proposition \ref{prop:spectral-matching}, a global minimizer of the original
problem is exactly a pair of one-sided minimizers whose principal eigenvalues
agree and whose common eigenvalue minimizes the scalar matching functional.
Uniqueness of both the minimizing value and the one-sided extremals gives
uniqueness of the glued potential.  Multiple minimizing values, or multiple
one-sided extremals at a minimizing value, give distinct glued potentials with
the same optimal distance.
\end{proof}
\begin{corollary}\label{cor:strict-matching}
Fix $T\in(0,R)$ in the Hilbert benchmark regime.  Suppose that the one-sided
extremal problems defining $D_L(\tau;T)$ and $D_R(\tau;T)$ have unique minimizers
for all $\tau$ in a neighborhood \add{$U_0$} of a value $\tau_0$, and that the
matching functional
\[
        M_T(\tau)=D_L(\tau;T)^2+D_R(\tau;T)^2
\]
is $C^2$ near $\tau_0$ with
\[
        M_T'(\tau_0)=0,\qquad M_T''(\tau_0)>0.
\]
{After shrinking $U_0$ if necessary, assume that}
\[
       {M_T(\tau)>M_T(\tau_0),\qquad \tau\in U_0\setminus\{\tau_0\}.}
\]

If, in addition, $M_T(\tau)>M_T(\tau_0)$ for all
$\tau\notin U_0$, then the benchmark inverse nodal optimization problem with
prescribed node $T$ has a unique global minimizer.
{More generally, if the above assumptions hold at two distinct global
minimizing values $\tau_0\ne\tau_1$, with the corresponding neighborhoods chosen
disjoint, then the benchmark inverse nodal optimization problem with node $T$
has at least two distinct global minimizers.}
\end{corollary}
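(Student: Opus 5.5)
The plan is to reduce both assertions to Proposition \ref{prop:spectral-matching} and Corollary \ref{cor:matching-criterion}.

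\emph{Step 1: $\tau_0$ is the unique global minimizer of $M_T$.} The local strictness on $U_0\setminus\{\tau_0\}$ is assumed, and so is strictness off $U_0$. Together these give $M_T(\tau)>M_T(\tau_0)$ for every $\tau\ne\tau_0$. By \eqref{eq:matching-functional}, $M_T(\tau_0)=\inf_\tau M_T(\tau)=d(T)^2$. The conditions $M_T'(\tau_0)=0$, $M_T''(\tau_0)>0$ are not logically needed for this step. They only explain why the shrinking of $U_0$ is possible, since a $C^2$ nondegenerate critical point is a strict local minimum.

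\emph{Step 2: uniqueness.} Let $\hat q$ be any global minimizer; one exists by Theorem \ref{thm:existence}. Proposition \ref{prop:spectral-matching} gives a common value $\sigma=\lambda_L(\hat q_L;T)=\lambda_R(\hat q_R;T)$. Then
\[
d(T)^2=\norm{\hat q-c}_H^2\ge D_L(\sigma;T)^2+D_R(\sigma;T)^2=M_T(\sigma)\ge d(T)^2 .
\]
So equality holds throughout. Hence $\sigma$ minimizes $M_T$, which forces $\sigma=\tau_0\in U_0$. Equality also forces $\norm{\hat q_L-c}_{L^2_w(0,T)}=D_L(\tau_0;T)$ and $\norm{\hat q_R-c}_{L^2_w(T,R)}=D_R(\tau_0;T)$, because each term is bounded below separately and the sum attains the bound. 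By the assumed uniqueness of the one-sided extremals at $\tau_0\in U_0$, the restrictions $\hat q_L,\hat q_R$ are uniquely determined. Therefore $\hat q$ is unique a.e. This is essentially the first half of Corollary \ref{cor:matching-criterion}.

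\emph{Step 3: multiplicity.} Suppose the assumptions hold at two distinct global minimizing values $\tau_0\ne\tau_1$, with disjoint neighborhoods. Lemma \ref{lem:one-sided-extremals} gives one-sided extremals at each $\tau_j$. Glue them as in the proof of Proposition \ref{prop:spectral-matching} to get $q^{(j)}$ with $F(q^{(j)})=T$ and $\norm{q^{(j)}-c}_H^2=M_T(\tau_j)=d(T)^2$. So both are global minimizers.

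They are distinct because their left principal eigenvalues are $\tau_0\ne\tau_1$, and $\lambda_L$ depends only on the restriction to $(0,T)$. The disjointness of neighborhoods is used only to keep the local strict-minimum hypotheses separate; it plays no role in distinctness.

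\emph{Main subtlety.} The one place requiring care is the equality-forcing argument in Step 2. One must note that $D_L(\sigma;T)$ and $D_R(\sigma;T)$ are infima over independent sets, so strictness in either component would contradict minimality. I would spell this out explicitly rather than just cite Corollary \ref{cor:matching-criterion}.
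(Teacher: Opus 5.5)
Your proposal is correct and follows essentially the same route as the paper: first show that $\tau_0$ is the unique global minimizing value of $M_T$, then use the spectral-matching equivalence of Proposition~\ref{prop:spectral-matching} (via Corollary~\ref{cor:matching-criterion}) to get uniqueness, and in the multiplicity case tell the two glued minimizers apart by their left principal eigenvalues. The differences are minor: you write out the equality-forcing step that the paper delegates to Corollary~\ref{cor:matching-criterion}, and you correctly note that $M_T'(\tau_0)=0$, $M_T''(\tau_0)>0$ are redundant once strict minimality on $U_0\setminus\{\tau_0\}$ is assumed, whereas the paper uses them to obtain strict convexity on a shrunken $U_0$.
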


\begin{proof}
{Set}
\[
        {M_T(\tau):=D_L(\tau;T)^2+D_R(\tau;T)^2.}
\]
{Since $M_T$ is $C^2$ near $\tau_0$, $M_T'(\tau_0)=0$, and
$M_T''(\tau_0)>0$, we may shrink $U_0$ so that $M_T$ is strictly convex on
$U_0$. Hence $\tau_0$ is the unique minimizer of $M_T$ in $U_0$. The additional
assumption}
\[
        {M_T(\tau)>M_T(\tau_0),\qquad \tau\notin U_0,}
\]
{then implies that $\tau_0$ is the unique global minimizer of the scalar
matching problem.}

The assumptions say precisely that the scalar matching problem has
{a unique global minimizing value and that, at this value, both one-sided
extremal problems have unique minimizers. Therefore Corollary
\ref{cor:matching-criterion} gives uniqueness of the glued potential, and hence
uniqueness of the global minimizer for the benchmark inverse nodal optimization
problem.}

{If the same hypotheses hold at two distinct global minimizing values
$\tau_0\ne\tau_1$, then Corollary \ref{cor:matching-criterion} gives a global
minimizer associated with each value. These two minimizers are distinct: if the
glued potentials coincided, their left and right principal eigenvalues would
coincide, forcing $\tau_0=\tau_1$. Thus the original benchmark problem has at
least two distinct global minimizers.}
\end{proof}

\begin{remark}
Theorem \ref{thm:near-matching-unique} proves uniqueness of the matching value
near the benchmark node in the Hilbert regime.  Its proof uses the variational
uniqueness theorem and the exact spectral reduction, rather than an intrinsic
calculation of $M_T''$.  Corollary \ref{cor:strict-matching} records the sharper
route one would need for a purely spectral-matching proof: establish
differentiability and strict convexity of $M_T$ at the selected matching value.
{In this sense, strict positivity of the scalar second variation is the
spectral counterpart of the local Hilbert-space rigidity obtained from the
nodal chart.}
This remains the natural next rigidity mechanism beyond the local nodal-chart
argument.
\end{remark}

\begin{remark}
For arbitrary $T$ away from the perturbative regime of Theorem
\ref{thm:near-global-unique}, global uniqueness or multiplicity is reduced to
the minimizer structure of \eqref{eq:matching-functional}.  This is sharper than
a formal shooting assertion: it is global, variational, and stated entirely in
terms of principal-eigenvalue optimization on the two radial subintervals.
Proving strict convexity of this scalar functional, or constructing multiple
global minimizers, would be the next step toward a full global rigidity theory.
{Thus the remaining global problem is no longer a nodal-regularity issue,
but a one-dimensional spectral matching problem for the principal eigenvalues
on $(0,T)$ and $(T,R)$.}
\end{remark}

\section*{Declarations}

We declare that AI has been used for language polishing. All ideas and proofs are developed by the authors. Nevertheless, GPT was employed to check and verify our arguments, and the AI confirmed the correctness of our proofs.

\end{document}